\makeatletter \@addtoreset{equation}{section} \makeatother
\newtheorem{lem}{Lemma}
\newtheorem{thm}{Theorem}
\newtheorem{ehmthm}{Hypergeometric Representation Theorem}
\newtheorem{coro}{Corollary}
\newtheorem{bibn}{Binomial Bound}
\newtheorem{hypbn}{Hypergeometric Bound}
\newtheorem{serfbnd}{Finite Sampling Bound}
\newtheorem{defn}{Definition}
\begin{document}

\begin{frontmatter}
\title{Exponential Bounds for the Hypergeometric Distribution}
\runtitle{Sampling without replacement: Exponential Bounds}

\begin{aug}
\author{\fnms{Evan} \snm{Greene}\thanksref{t1}\ead[label=e1]{egreene@uw.edu}}
\and
\author{\fnms{Jon A.} \snm{Wellner}\thanksref{t2}\ead[label=e2]{jaw@stat.washington.edu}}
\ead[label=u1,url]{http://www.stat.washington.edu/jaw/}

\thankstext{t1}{Supported in part by NSF Grant DMS-1104832} 
\thankstext{t2}{Supported in part by NSF Grant DMS-1104832 and NI-AID grant 2R01 AI291968-04} 
\runauthor{Greene and Wellner}


\address{Department of Statistics \\University of Washington\\Seattle, WA 98195-4322\\
\printead{e1}}

\address{Department of Statistics, Box 354322\\University of Washington\\Seattle, WA  98195-4322\\
\printead{e2}\\
\printead{u1}}
\end{aug}

\begin{abstract}
We establish exponential bounds for the hypergeometric distribution
which include a finite sampling correction factor, but are otherwise analogous 
to bounds for the binomial distribution due to Le\'on and Perron (2003) and Talagrand (1994).
We also extend a convex ordering of Kemperman's (1973) for sampling without replacement from populations of 
real numbers between zero and one:  a population of all zeros or ones (and hence yielding 
a hypergeometric distribution in the upper bound) gives the extreme case.
\end{abstract}

\begin{keyword}[class=AMS]
\kwd[Primary ]{60E10}
\kwd{60F10}
\kwd[; secondary ]{62D99}
\end{keyword}

\begin{keyword}
\kwd{binomial distribution}
\kwd{hypergeometric distribution}
\kwd{sampling without replacement}
\kwd{convex ordering}
\kwd{exponential bound}
\kwd{finite sampling correction factor}
\end{keyword}

\end{frontmatter}


\bigskip

\bigskip

\section{Introduction and overview}
\label{sec:intro}

In this paper we derive several exponential bounds for the tail of the hypergeometric distribution. This distribution emerges as an extreme 
case in the setting of sampling without replacement from a finite population. We begin with a description of this setting.
Consider a population $C$ containing $N$ elements, $C := \left\{c_1,\dots,c_N\right\}$, with $c_i \in \mathbb{R}$. Let $N = |C|$ denote
the cardinality of this set, $a$ the value of the minimum element, $b$ the value of the maximum element, and $\mu := (N^{-1})(\sum_{i=1}^Nc_i)$, 
the population mean. 
Let $1 \leq i\leq n \leq N$, and $X_i$ denote the $i^{th}$ draw without replacement from this population. 
Finally, let $S_n := \sum_{i=1}^nX_i$ denote the sum of this sampling procedure, and let $\bar{X}_n := S_n/n$ denote the sample mean. 

R. J. Serfling 
obtained the following bound. 

\begin{serfbnd}
(\textbf{Serfling} \citep{MR0420967}) For $1 \leq n \leq N$, $S_n$ the sum in sampling without replacement, and $\lambda > 0$:
\begin{align}
P\left(\sqrt{n}(\bar{X}_n - \mu) \geq \lambda \right) \leq
\exp\left(-\frac{2 \lambda^2}{(1-f_n^{*})(b-a)^2}\right) \label{serfbnd}
\end{align}
where $f_n^{*} := (n-1)/N$. 
\end{serfbnd}

This result applies to sampling without replacement from any finite bounded population. 
Let $D, N \in \mathbb{N}$ such that $D < N$.
Then as a special case we may apply the bound to a population of $N$ elements containing $D$ $1$'s and $N-D$ $0$'s. 
Note that in this specific case $S_n =: S_{n,D,N} \sim \text{Hypergeometric}(n,D,N)$. 

For the hypergeometric distribution, the following facts are well known:
\begin{eqnarray}
P(S_{n} = k) & = & \frac{{D \choose k}{N-D \choose n-k}}{{N \choose n}}, \ \ \ \max\{ 0, n-(N-D)\} \le k \le \min\{ D , n \} , \nonumber \\ 
E(S_{n}) & = & n\left(\frac{D}{N}\right),   \nonumber \\
Var(S_{n}) & = & n \left(\frac{D}{N}\right)\left(1-\frac{D}{N}\right) \left(1-\frac{n-1}{N-1}\right) =: n \mu_{D,N}(1-\mu_{D,N})(1-f_n) \label{HG:VARIANCE}
\end{eqnarray}
with the final line defining $\mu_{D,N} := D/N$ and $f_n := (n-1)/(N-1)$.
Applying Serfling's result to the case of the hypergeometric distribution
immediately gives 
\begin{align}
P\left(\sqrt{n}(\bar{X}_{n} - \mu_{D,N}) \geq \lambda \right) \leq \exp\left(-\frac{2 \lambda^2}{(1-f_n^{*})}\right)
\label{serfling:hypergeometric:spec}
\end{align}
since $(b-a)^2=(1-0)^2=1$. 
 Comparison of the factor $1-f_n^{*}$ in Serfling's bound to the factor $1-f_n$ in \eqref{HG:VARIANCE} suggests
the following question: can Serfling's bound be improved to 
\begin{align}
P\left(\sqrt{n}(\bar{X}_n - \mu) \geq \lambda \right) \leq
\exp\left(-\frac{2 \lambda^2}{(1-f_n)(b-a)^2}\right)
\label{impserfbnd}
\end{align}
in general, or at least in the special case of the hypergeometric distribution? 


To date, the improvement conjectured in \eqref{impserfbnd} has not been obtained. For the special case of the hypergeometric,
Hush and Scovel derived the following bound by extending an argument given by Vapnik. 
See \citep{MR2206293} and \citep{MR1641250}.
\begin{hypbn}
(\textbf{Hush and Scovel}, \citep{MR2206293}) Suppose $S_n \sim \text{Hypergeometric(n,D,N)}$. Then for all $\lambda > 0$ we have
\begin{align}
P\left(\sqrt{n}(\bar{X}_{n} - \mu_{D,N})\geq \lambda\right) \leq \exp\left(-2\alpha_{n,D,N}(n \lambda^2-1)\right) \label{hushscovelbound}
\end{align}
where
\[
\alpha_{n,D,N} := \left(\frac{1}{n+1}+\frac{1}{N-n+1}\right)\vee\left(\frac{1}{D+1}+\frac{1}{N-D+1}\right)\ .
\]
\end{hypbn}

More recently, Bardenet and Maillard have improved a deficiency in Serfling's inequality that occurs when more than half
the population is sampled without replacement by using a reverse-martingale argument. 
The statement here is a specialization of their Theorem 2.4
to the hypergeometric case. See \citep{bardenet2015concentration} for additional discussion.
\begin{hypbn}
(\textbf{Bardenet and Maillard} \citep{bardenet2015concentration}) 
Suppose $S_n \sim \text{Hypergeometric(n,D,N)}$. Then for all $\lambda > 0$  and $n < N$ we have
\[
P\left(\sqrt{n}(\bar{X}_{n} - \mu_{D,N}) \geq \lambda \right) \leq \exp\left(-\frac{2 \lambda^2}{(1-n/N)(1+1/n)}\right)\ .
\]
\end{hypbn}

We will 
justify the special consideration given to the hypergeometric distribution relative to the goal of obtaining \eqref{impserfbnd}
by adapting a result of Kemperman \citep{kemperman1973moment} to derive 
a convex order between samples without replacement from populations consisting of elements in $[0,1]$ and the
hypergeometric distribution. We will then demonstrate how one may
use this convex order to  obtain exponential bounds for the more general problem of sampling without replacement from a bounded, finite population.
In doing so, we return to the setting of Serfling. In this setting we will consider the variance of the population as well. 
Anticipating this, we conclude the introduction
with a specialization of a bound of Bardenet and Maillard which incorporates information about the population variance into the bound. 
\begin{serfbnd}
(\textbf{Bardenet and Maillard} \citep{bardenet2015concentration})
For $1 \leq n < N$, $S_n$ the sum in sampling without replacement from a population $\bold{c}:=\left\{c_1,\dots,c_N\right\}$, 
$\delta \in [0,1]$, and $\lambda > 0$, we have
\begin{align}
P\left(\sqrt{n}(\bar{X}_n - \mu) \geq \lambda \right) \leq \exp\left(-\frac{\lambda^2}{2(\gamma^2+(2/3)(b-a)(\lambda/\sqrt{n}))}\right)+\delta 
\label{expbnd:bm2}
\end{align}
where 
\begin{align}
a &:= \min_{1\leq i \leq N} c_i\ ,\ b := \max_{1\leq i \leq N} c_i\ ,\ f_{n}^* := (n-1)/N\ ,\  
\mu := (1/N)\sum_{i=1}^N c_i\ ,\ \sigma^2 := (1/N) \sum_{i=1}^N (c_i-\mu)^2\ , \nonumber \\
\gamma^2 &:= (1-f_{n}^*)\sigma^2+f_{n}^*c_{n-1}(\delta)\ , 
\text{ and } \ c_n(\delta) := \sigma(b-a)\sqrt{\frac{2\log(1/\delta)}{n}}\ . \nonumber
\end{align}
\end{serfbnd}
\section{Exponential Bounds}
\label{sec:ExpBounds}

Binomial distributions arise when sampling with replacement from a population consisting only of $0$'s and $1$'s. 
As we saw in the introduction, hypergeometric distributions arise when sampling without replacement from such populations. 
Intuitively, sampling without replacement is more informative than sampling with replacement: 
when items are not replaced, eventually, when $n=N$, the entire population is sampled.
This being the case, it is natural to guess that upper bounds which apply to binomial tail probabilities will also apply to the hypergeometric tail probabilities. 

Hoeffding \citep{HOEFFDING1} proved that this guess is true for exponential bounds derived via the Cram{\'{e}}r - Chernoff method. 
This is because a convex order exists between samples with and without replacement (Hoeffding proves this order in his Theorem 4). 
Convex orders between a variety of sampling plans were subsequently explored by Kemperman \citep{kemperman1973moment} and Karlin 
\citep{karlin1974inequalities}.  


Note that by Ehm (Theorem 2 \citep{MR1093412}; see also Holmes, Theorem 3.2 \citep{MR2118602}), 
the total variation distance between the hypergeometric distribution $P_{n,D,N}^{hyper}$ 
and the binomial distribution $P_{n,D/N}^{bin}$ satisfies 
$$
d_{TV} (P_{n,D,N}^{hyper}, P_{n,D/N}^{bin}) \le \frac{n}{n+1} (1 -(D/N)^{n+1} - (1-D/N)^{n+1} )\frac{n-1}{N-1} \le \frac{n-1}{N-1},
$$
so we expect that the binomial bounds will be essentially optimal when $(n-1)/(N-1) \rightarrow 0$.  

Here we are interested in sampling scenarios in which $(n-1)/(N-1) \not\rightarrow 0$. Given the similarity in scenarios
that produce binomial and hypergeometric probabilities, one might expect that the binomial exponential bounds provide a clue to the form that hypergeometric exponential bounds will take: 
hypergeometric bounds look like binomial bounds, with a finite sampling correction factor included. 
Indeed, this is the case when we compare the bound of Serfling \eqref{serfbnd} to Hoeffding's uniform bound  
(Theorem 2.1 \citep{HOEFFDING1}), since the only difference between the two is the quantity $1-f_n^*$.
We therefore state several exponential bounds which apply to the binomial distribution. 

\begin{bibn}
(\textbf{Le{\'{o}}n and Perron} \citep{LEONPERRON1})
Let $n \geq 1$, $p\in (0,1)$, $\lambda < \sqrt{n}/2$, and $X_1,\dots,X_n$ be independent Bernoulli$(p)$ random variables.
Then
\begin{align}
P(\sqrt{n}(\bar{X}_n - p) \geq \lambda) 
\leq
\sqrt{\frac{1}{2\pi\lambda^2}}  
\left(\frac{1}{2}\right)\sqrt{\frac{\sqrt{n}+2\lambda}{\sqrt{n}-2\lambda}}e^{-\left(2\lambda^2\right)}\ .  \label{LEON_PERRON_BINOMIAL_BOUND}
\end{align}
\end{bibn}
The second bound was established by Talagrand 
\citep[pp.~48--50]{TALAGRAND1}. 
The statement here is taken from van der Vaart and Wellner 
\citep[pp.~460--462]{MR1385671}):
\begin{bibn}
(\textbf{Talagrand} \citep{TALAGRAND1})
Fix $p_0$ and consider $p$ such that $0 < p_0 \leq p \leq 1-p_0 < 1$.
Suppose for $n \in \mathbb{N}$ that $X_1,\dots,X_n$ are i.i.d. Bernoulli$(p)$ random variables.
Then there exist constants $K_1$ and $K_2$ depending only on $p_0$, such that: 
\begin{align}
&(i)\ \text{For all } \lambda > 0,\ \ \ P\left(\sqrt{n}(\bar{X}_n-p) = \lambda\right) \leq \frac{K_1}{\sqrt{n}} \exp\left(-\left[2\lambda^2 + \frac{\lambda^4}{4n}\right]\right)\ . \nonumber \\
\nonumber \\
&(ii)\ \text{For all } 0 < t < \lambda, \ \ \ P\left(\sqrt{n}(\bar{X}_n-p) \geq t\right) \leq \frac{K_2}{\lambda} \exp\left(-\left[2\lambda^2 + \frac{\lambda^4}{4n}\right]\right)\exp\left(5\lambda[\lambda-t]\right)\ .\nonumber \\
\nonumber \\
&(iii)\ \text{For all } \lambda > 0,\ \ \ P\left(\sqrt{n}(\bar{X}_n-p) \geq \lambda\right) \leq \frac{K_2}{\lambda} \exp\left(-\left[2\lambda^2 + \frac{\lambda^4}{4n}\right]\right)\ . \label{TAL:BIN:BOUND}
\end{align}
\end{bibn}
Another well-known exponential bound which applies to sums of independent random variables 
(and consequently the binomial distribution) was discovered by Bennett \cite{bennett1962probability}. 
Bennett's bound incorporates information about the population variance, and so obtains notable improvements when the population variance is small.
This statement of Bennett's inequality specialized to the binomial setting is adapted from Shorack and Wellner \cite{MR838963}. 

\begin{bibn}
(\textbf{Bennett} \cite{bennett1962probability})
Let $X_1,\dots, X_n$  i.i.d. $\text{Bernoulli}(\mu)$, with $0 < \mu \leq 1/2$. Then for all $\lambda \geq 0$
\begin{align}
P(\sqrt{n}(\bar{X}_n-\mu) \geq \lambda) \leq 
\exp\left(-\frac{\lambda^2}{2\mu(1-\mu)}\psi\left(\frac{\lambda}{\sqrt{n}\mu(1-\mu)}\right)\right)\label{ineq:bennett}
\end{align}
where $\psi (\lambda) := (2/\lambda^{2}) h(1+\lambda) $ where $h(\lambda) := \lambda(\log \lambda -1) +1$.
\end{bibn}


Inspecting the form of \eqref{LEON_PERRON_BINOMIAL_BOUND}, \eqref{TAL:BIN:BOUND}, and  \eqref{ineq:bennett},
we notice that 
when these bounds are compared to the hypergeometric tail bound \eqref{serfling:hypergeometric:spec} 
obtained from Serfling's bound they do not take advantage of the finite sampling setting.
Our hope then is we can derive probability bounds which look like the preceding binomial expressions, but improved by a
finite-sampling correction factor. Such improved bounds exist and are the main results of this paper. Their statements follow.

\begin{thm}\label{thm:lpanalogue}
Suppose $S_{n} \sim Hypergeometric(n,D,N)$. Define  $\mu := D/N$, and suppose $N > 4$ and $2 \leq n < D \leq N/2$.
Then for all $0< \lambda  < \sqrt{n}/2$ we have
\begin{align}
P\left(\sqrt{n}(\bar{X}_{n} - \mu) \geq \lambda\right) 
\leq
&\sqrt{\frac{1}{2\pi\lambda^2}} \left(\frac{1}{2}\right)
\sqrt{\left(\frac{N-n}{N}\right)\left(\frac{\sqrt{n}+2\lambda}{\sqrt{n}-2\lambda}\right)\left(\frac{N-n+2\sqrt{n}\lambda}{N-n-2\sqrt{n}\lambda}\right)}\nonumber \\
&\cdot\ \ \exp \left ( - \frac{2}{1-\frac{n}{N}} \lambda^2  \right ) \exp \left ( - \frac{1}{3} \left ( 1 + \frac{n^3}{(N-n)^3} \right ) \frac{\lambda^4}{n} \right )\ . \label{HYPERGEOMETRIC:LP_BOUND}
\end{align}
\end{thm}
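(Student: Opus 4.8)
The plan is to mimic, for the hypergeometric mass function, the argument Le\'on and Perron used for the binomial. Set $k^{*}:=n\mu+\sqrt{n}\,\lambda$ and $k_{0}:=\lceil k^{*}\rceil$, so that $P(\sqrt{n}(\bar X_{n}-\mu)\ge\lambda)=P(S_{n}\ge k_{0})$. Since the hypergeometric mass function is log-concave, the ratios $P(S_{n}=k+1)/P(S_{n}=k)=(D-k)(n-k)/\{(k+1)(N-D-n+k+1)\}$ are nonincreasing in $k$; hence for every $k\ge k_{0}$ this ratio is at most its value $\rho_{k_{0}}$ at $k_{0}$, and summing the dominating geometric series yields $P(S_{n}\ge k_{0})\le P(S_{n}=k_{0})\,(1-\rho_{k_{0}})^{-1}$. (Equivalently, via the representation $P(S_{n,D,N}=k)=P(B_{n}=k)\,P(B_{N-n}=D-k)/P(B_{N}=D)$ with $B_{m}\sim\text{Bin}(m,D/N)$, this is log-concavity applied to the product of the two binomial masses in the numerator, which is why the result should look like the Le\'on--Perron bound carrying extra correction factors.) Everything is thereby reduced to estimating the single atom $P(S_{n}=k_{0})$ and the rational quantity $1-\rho_{k_{0}}$.

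For $1-\rho_{k_{0}}$ the key point is that, after clearing denominators, the terms of the numerator that are constant and quadratic in $\lambda$ cancel identically, so (up to the harmless $+1$ shifts) $1-\rho_{k^{*}}=N(k^{*}-n\mu)/\{k^{*}(N-D-n+k^{*})\}$. For $P(S_{n}=k_{0})$ I would apply Stirling's formula with its explicit error bounds to every factorial in $\binom{D}{k_{0}}\binom{N-D}{n-k_{0}}/\binom{N}{n}$; collecting terms gives the Gaussian prefactor $\{2\pi n\mu(1-\mu)(1-n/N)\}^{-1/2}$, times the square root of the ratio of the degree-four polynomial $k(D-k)(n-k)(N-D-n+k)$ at $k=n\mu$ to its value at $k=k_{0}$, times a Stirling-error factor, times $\exp(-E(k_{0}))$, where $E$ is the combination of entropy terms with $E'(k)=\log\{(D-k)(n-k)/(k(N-D-n+k))\}$, so that $E$ and $E'$ both vanish at $k=n\mu$. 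Multiplying $P(S_{n}=k_{0})$ by $(1-\rho_{k_{0}})^{-1}$, the polynomial parts recombine and, after cancellation, one is left with $\sqrt{1/(2\pi\lambda^{2})}\cdot\sqrt{\mu(1-\mu)}\cdot\sqrt{(N-n)/N}$, times $\sqrt{(1+a)(1+b)/\{(1-c)(1-d)\}}$, times the Stirling-error factor, times $\exp(-E(k_{0}))$, where $a=\lambda N/(\sqrt{n}\,D)$, $b=\sqrt{n}\,\lambda N/\{(N-D)(N-n)\}$, $c=\sqrt{n}\,\lambda N/\{D(N-n)\}$, $d=\lambda N/\{\sqrt{n}\,(N-D)\}$; under the hypotheses $\lambda<\sqrt{n}/2$ and $2\le n<D\le N/2$ one checks $c<1$ and $d<1$, so every denominator occurring in the theorem's bound is positive.

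It remains to bound $\sqrt{\mu(1-\mu)}\cdot\sqrt{(1+a)(1+b)/\{(1-c)(1-d)\}}\cdot(\text{Stirling error})\cdot\exp(-E(k_{0}))$ by $\tfrac12\,\sqrt{\tfrac{\sqrt{n}+2\lambda}{\sqrt{n}-2\lambda}}\,\sqrt{\tfrac{N-n+2\sqrt{n}\,\lambda}{N-n-2\sqrt{n}\,\lambda}}\,\exp\!\big(-\tfrac{2\lambda^{2}}{1-n/N}-\tfrac13(1+\tfrac{n^{3}}{(N-n)^{3}})\tfrac{\lambda^{4}}{n}\big)$, uniformly over $2\le n<D\le N/2$. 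When $\mu=D/N=1/2$ this is essentially an identity on the polynomial side: then $a=d=2\lambda/\sqrt{n}$ and $b=c=2\sqrt{n}\,\lambda/(N-n)$, so $\sqrt{(1+a)(1+b)/\{(1-c)(1-d)\}}$ equals exactly the product of the two square-root factors in the theorem and $\sqrt{\mu(1-\mu)}=\tfrac12$. On the exponential side, Taylor-expanding $E$ about $n\mu$ gives $E(k_{0})=\lambda^{2}/\{2(1-n/N)\mu(1-\mu)\}+(\text{cubic})+(\text{quartic})+\cdots$; at $\mu=1/2$ the quadratic equals $2\lambda^{2}/(1-n/N)$, all odd-order terms vanish, all even-order terms are positive, and a short computation gives the quartic as $\tfrac{4}{3}(1+n^{3}/(N-n)^{3})\lambda^{4}/n$, four times the coefficient stated in the theorem — so the quadratic and quartic terms of $E$ alone already dominate the target exponent, the surplus $(1+n^{3}/(N-n)^{3})\lambda^{4}/n$ leaving room to spare for the Stirling-error factor.

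The genuine difficulty, exactly as for Le\'on and Perron in the binomial case, is $\mu<1/2$: there $\sqrt{\mu(1-\mu)}<\tfrac12$ and $\lambda^{2}/\{2(1-n/N)\mu(1-\mu)\}$ strictly exceeds $2\lambda^{2}/(1-n/N)$, but $1+a$ grows like $1/\mu$ (and $1-c$ shrinks) as $\mu\to 0$ and the cubic term now lowers the rate — one checks $E^{(3)}(n\mu)\ge 0$ whenever $n\le N/2$ and $D\le N/2$ — so one cannot compare factor by factor. One must instead treat $\mu$ as a free variable and prove a one-variable inequality showing that the decay of $\exp(-E(k_{0}))$ exactly absorbs the growth, away from $\mu=\tfrac12$, of $\sqrt{\mu(1-\mu)}\,\sqrt{(1+a)(1+b)/\{(1-c)(1-d)\}}$. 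Packaging that together with the integer-rounding step ($k_{0}=\lceil k^{*}\rceil$ versus the real point $k^{*}$, handled by monotonicity of the estimating function $k\mapsto P(S_{n}=k)(1-\rho_{k})^{-1}$, in its real-variable Stirling form, past the mode) and with the Stirling-error bookkeeping, so that everything collapses to the clean stated form rather than a messier one, is where the real work lies; the clean constants — $\tfrac12$, the two square-root factors, and the coefficient $\tfrac13$ in place of the true $\tfrac43$ — are chosen precisely to leave enough slack for all of these adjustments at once.
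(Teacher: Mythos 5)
Your architecture is the right one, and much of it matches the paper: the geometric-series/log-concavity step is essentially the paper's Lemma~\ref{hgbinanalogue} (indeed $\bigl(1-\tfrac{(D-k)(n-k)}{k(N-D-n+k)}\bigr)^{-1}=\tfrac{k(N-D-n+k)}{Nk-nD}$, so your ratio bound and the paper's Feller-type factor coincide up to the $+1$ shifts), your Stirling prefactor collapses to exactly the function $\mu(1-\mu)(u+\mu)(\gamma(1-\mu)+u)/\{(1-u-\mu)(\gamma\mu-u)\}$ with $\gamma=(N-n)/n$ that the paper analyzes, and your identification of the $\mu=1/2$ case with the stated square-root factors is correct. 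But the proposal stops at the two steps that constitute the actual proof, and the route you sketch for them would not go through as described. First, you propose to control the exponent by Taylor-expanding $E$ at the mode and, for $\mu<1/2$, to let the ``surplus'' of the quadratic term $\lambda^2/\{2(1-n/N)\mu(1-\mu)\}$ over $2\lambda^2/(1-n/N)$ absorb the prefactor growth. You yourself note the obstruction: the cubic term has the wrong sign for $\mu<1/2$ (with the convention that makes $E\ge 0$ one finds $E'''(n\mu)=(n^{-2}-(N-n)^{-2})((1-\mu)^{-2}-\mu^{-2})\le 0$), so truncating the Taylor series at second order does not give a valid lower bound on $E(k_0)$, and the proposed coupled ``one-variable inequality'' is never formulated, let alone proved. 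The paper avoids this entirely: it writes the entropy as $n\Psi(u,\mu)+(N-n)\Psi(\tfrac{f}{\bar f}u,1-\mu)$ and uses the pointwise identity $\partial_u^2\Psi(u,\mu)=4/\{1-4(u-(1/2-\mu))^2\}\ge 4(1+4(u-(1/2-\mu))^2)$, integrating twice (with the elementary fact that $\beta\mapsto(u-\beta)^3+\beta^3$ is minimized at $\beta=u/2$) to get $\tfrac{2nN}{N-n}u^2+\tfrac{n}{3}(1+\tfrac{n^3}{(N-n)^3})u^4$ \emph{uniformly in} $\mu$ --- so there is no surplus to spend and no cubic term to fight; the coefficient $\tfrac13$ (rather than your computed $\tfrac43$ at $\mu=1/2$) is simply what this uniform argument yields, not slack reserved for the prefactor.

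Second, the prefactor bound is then a genuinely separate statement --- that $\mu(1-\mu)(u+\mu)(\gamma(1-\mu)+u)/\{(1-u-\mu)(\gamma\mu-u)\}$ is maximized over $\mu\in[(n+1)/N,1/2]$ at $\mu=1/2$ for all $(u,\gamma)\in(0,1/2]\times(1,\infty)$ --- and it is where most of the labor sits (the paper's Lemma~\ref{HG_Technical_Lemma}, a several-page calculus argument locating the critical points of the two factors, building an upper envelope, and comparing endpoint values). Your proposal names this inequality but offers no argument for it, and the claim that one ``cannot compare factor by factor'' and must instead trade against the exponential is precisely the wrong turn: the inequality holds on its own. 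Until both the uniform-in-$\mu$ entropy bound and this prefactor maximization are actually established, the proposal is a correct skeleton with its two load-bearing steps missing.
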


\begin{thm}\label{thm:talagrandanalogue}
Suppose $\sum_{i=1}^nX_i \sim Hypergeometric(n,D,N)$. Define $\psi := n/N$ and $\mu := D/N$, and let $n < D$.
Fix $\mu_0,\psi_0 > 0$ such that $0 < \mu_0 \le \mu \le 1-\mu_0< 1$ and $0 < \psi_0 \le \psi  \le 1 - \psi_0 < 1$.
Then there exist constants $K_1,K_2$ depending only on $\mu_0$ and $\psi_0$ such that:
\begin{align}
&(i)\ \text{For all } \lambda> 0, \ \ \ 
P( \sqrt{n} ( \overline{X}_n - \mu) = \lambda )  
\leq \frac{K_1}{\sqrt{n}} \exp \left ( - \frac{2 \lambda^2}{1- \frac{n}{N} } \right )
       \exp \left ( - \left ( \frac{1}{4}  + \frac{1}{3} \left ( \frac{n}{N-n} \right )^3 \right ) \frac{\lambda^4}{n} \right )\ .  \nonumber \\
\nonumber \\
&(ii)\ \text{For all } 0 < t < \lambda, \ \ \ 
P( \sqrt{n} ( \overline{X}_n - \mu) \ge t )  
\leq  \frac{K_2}{\lambda}
\begin{pmatrix}
\exp \left ( - \frac{2 \lambda^2}{1- \frac{n}{N} } \right ) 
\exp \left ( -  \left ( \frac{1}{4} + \frac{1}{3} \left ( \frac{n}{N-n} \right )^3 \right ) \frac{\lambda^4}{n} \right ) \\
\ \  \cdot \exp \left ( \lambda ( \lambda -t) \left ( \frac{4}{1- \frac{n}{N} } + 1 + \frac{4 n^3}{3 (N-n)^3} \right ) \right ) 
\end{pmatrix}\ . \nonumber \\
\nonumber \\
&(iii)\ \text{For all } \lambda> 0, \ \ \ 
P( \sqrt{n} ( \overline{X}_n - \mu) \ge \lambda )  
\leq  \frac{K_2}{\lambda} \exp \left ( - \frac{2 \lambda^2}{1- \frac{n}{N} } \right )
       \exp \left ( -  \left (\frac{1}{4} + \frac{1}{3}\left ( \frac{n}{N-n} \right )^3 \right ) \frac{\lambda^4}{n} \right)\ . \label{TAL:HYPER:BOUND}
\end{align}
\end{thm}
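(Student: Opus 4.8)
The plan is to write a hypergeometric variable as a binomial conditioned on a sum, so that the hypergeometric tail probability becomes a ratio of (products of) binomial point probabilities, and then to apply the binomial bound \eqref{TAL:BIN:BOUND} to each factor. Fix $p=\mu:=D/N$, let $\xi_1,\dots,\xi_N$ be i.i.d.\ Bernoulli$(\mu)$, and set $B_1:=\sum_{i=1}^n\xi_i\sim\mathrm{Bin}(n,\mu)$ and $B_2:=\sum_{i=n+1}^N\xi_i\sim\mathrm{Bin}(N-n,\mu)$, which are independent. Conditioning $(\xi_1,\dots,\xi_N)$ on $\sum_{i=1}^N\xi_i=D$ turns it into a uniformly random arrangement of $D$ ones and $N-D$ zeros, so $\sum_{i=1}^nX_i\stackrel{d}{=}(B_1\mid B_1+B_2=D)$ (this is just the $n\leftrightarrow D$ symmetry of the hypergeometric), and hence, for every integer $\ell$,
\[ P\Big(\sum_{i=1}^nX_i\ge \ell\Big)=\frac{1}{P(B_1+B_2=D)}\sum_{j\ge \ell}P(B_1=j)\,P(B_2=D-j). \]
The identity is exact. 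Because $p=\mu$, the variable $B_1+B_2\sim\mathrm{Bin}(N,\mu)$ has its mean at the integer $D$, so Stirling's formula yields $P(B_1+B_2=D)\ge c_0\sqrt{N/(D(N-D))}$ with a universal $c_0$.

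For the numerator, write $j=n\mu+\sqrt{n}\,\lambda_j$. Then $P(B_1=j)$ is an upper-tail point probability of $\mathrm{Bin}(n,\mu)$ at normalized deviation $\lambda_j$, while $D-j=(N-n)\mu-\sqrt{n}\,\lambda_j$, so $P(B_2=D-j)$ is a lower-tail point probability of $\mathrm{Bin}(N-n,\mu)$ and, after replacing $\mu$ by $1-\mu$, an upper-tail point probability of $\mathrm{Bin}(N-n,1-\mu)$ at normalized deviation $\lambda_j\sqrt{n/(N-n)}$. Since $0<\mu_0\le\mu,1-\mu\le 1-\mu_0<1$, part (i) of \eqref{TAL:BIN:BOUND} applies to both factors with a single constant $K_1=K_1(\mu_0)$, and multiplying gives
\[ P(B_1=j)\,P(B_2=D-j)\le \frac{K_1^2}{\sqrt{n(N-n)}}\exp\!\left(-\frac{2\lambda_j^2}{1-n/N}-\Big(\tfrac14+\tfrac13\big(\tfrac{n}{N-n}\big)^3\Big)\frac{\lambda_j^4}{n}\right), \]
using $1+\tfrac{n}{N-n}=(1-n/N)^{-1}$ and $\tfrac{1}{4n}+\tfrac{n^2}{4(N-n)^3}=\tfrac{1}{4n}\big(1+(n/(N-n))^3\big)$ (the quartic coefficient that actually emerges, $\tfrac14+\tfrac14(n/(N-n))^3$, is dominated by the stated one). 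Taking $\ell$ to be a single term at $\lambda$ and dividing by the Stirling bound proves part (i), since $\tfrac{1}{\sqrt{n(N-n)}}\sqrt{D(N-D)/N}=\tfrac{1}{\sqrt{n}}\sqrt{(D/N)(N-D)/(N-n)}<\tfrac{1}{\sqrt{n}}$ (the inequality uses $D>n$).

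For parts (ii) and (iii) I would sum the last display over $j$ with $\lambda_j\ge t$. The $\lambda_j$ sit on a grid of spacing $1/\sqrt{n}$, so the sum is at most a universal multiple of $\sqrt{n}\int_t^\infty\exp(-2x^2/(1-n/N)-c\,x^4/n)\,dx$ plus its first term. Splitting the range at $\lambda$ and applying $\int_\lambda^\infty e^{-ax^2}dx\le e^{-a\lambda^2}/(2a\lambda)$ for $x\ge\lambda$, while on $t\le x<\lambda$ bounding the at most $\sqrt{n}(\lambda-t)$ summands by their value at $t$ and absorbing the difference via $x+t\le 2\lambda$ and $x^4-t^4\le 4\lambda^3(\lambda-t)$, produces precisely the correction factor $\exp\big(\lambda(\lambda-t)(\tfrac{4}{1-n/N}+1+\tfrac{4n^3}{3(N-n)^3})\big)$ (again up to terms that are harmless in the only nontrivial range, $\lambda\lesssim\sqrt{n}$, since $\sqrt{n}(\overline{X}_n-\mu)\le\sqrt{n}(1-\mu)$ forces the probability to vanish beyond that). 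Dividing by the Stirling bound collapses the residual $N$-dependent prefactor into $\sqrt{(1-n/N)\mu(1-\mu)}\le\tfrac12$, leaving the $K_2/\lambda$ prefactor and the stated bound; here $\psi_0$ enters because the first-term and boundary corrections are controlled by $1-n/N\ge\psi_0$. Part (iii) is the limit $t\uparrow\lambda$ of part (ii).

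The exponent comes out almost mechanically from \eqref{TAL:BIN:BOUND}(i); the delicate part is the polynomial prefactor. The crude bound $P(\sum X_i\ge \ell)\le P(B_1\ge \ell)\,P(B_2\le D-\ell)/P(B_1+B_2=D)$ loses a full factor of $\lambda$ and an unbounded factor, so one is forced to keep the exact constraint $B_1+B_2=D$ and estimate $\sum_{\lambda_j\ge t}\exp(-[\,\cdot\,])$ sharply. Arranging that every $N$-dependent contribution to the prefactor cancels into the bounded quantity $\sqrt{(1-n/N)\mu(1-\mu)}$, carrying out the gap argument for part (ii), and checking that $K_1,K_2$ depend only on $\mu_0$ and $\psi_0$ uniformly over the relevant $\lambda$, is where the real work lies.
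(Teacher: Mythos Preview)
Your conditioning representation $S_n \stackrel{d}{=} (B_1 \mid B_1+B_2=D)$ with $B_1\sim\mathrm{Bin}(n,\mu)$, $B_2\sim\mathrm{Bin}(N-n,\mu)$ is a genuinely different route from the paper's, which applies Stirling's formula directly to the hypergeometric mass and analyzes the resulting KL-type exponent $n\Psi(u,\mu)+(N-n)\Psi((f/\bar f)u,1-\mu)$ by differentiating and integrating twice. Your approach is conceptually attractive: the two binomial factors immediately explain why the exponent splits into a piece at scale $n$ and a piece at scale $N-n$, and the boundary cases $k/n\to 1$ and $k=n$ (which the paper has to treat separately in three cases) are absorbed for free into Talagrand's uniform bound.

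There is, however, a real gap in the quartic coefficient. Citing \eqref{TAL:BIN:BOUND}(i) as a black box gives only $\lambda_j^4/(4n)$ for $B_1$ and $\tilde\lambda^4/(4(N-n))$ for $B_2$, so the combined quartic coefficient is $\tfrac14+\tfrac14(n/(N-n))^3$, as you say. But your claim that this ``is dominated by the stated one'' is in the wrong direction for an upper bound: since $\tfrac14+\tfrac14(\cdot)<\tfrac14+\tfrac13(\cdot)$, you obtain
\[
\exp\!\Big(-\big(\tfrac14+\tfrac14(n/(N-n))^3\big)\tfrac{\lambda^4}{n}\Big)\;\ge\;\exp\!\Big(-\big(\tfrac14+\tfrac13(n/(N-n))^3\big)\tfrac{\lambda^4}{n}\Big),
\]
which does \emph{not} imply the theorem as stated. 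The fix is not hard but it means you cannot use Talagrand's bound as a black box: the paper's proof (and Talagrand's own argument) actually establishes the sharper pointwise inequality $\Psi(u,\mu)\ge 2u^2+\tfrac13 u^4$ before relaxing to $1/4$ in the final statement; applying that sharper inequality to each of $B_1$ and $B_2$ in your framework yields $\tfrac13+\tfrac13(n/(N-n))^3$, from which $\tfrac14+\tfrac13(n/(N-n))^3$ follows (the paper in fact spends the surplus $\tfrac{1}{12}$ on controlling the Stirling prefactor near the boundary, something your route does not need).

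For part (ii), your integral-comparison sketch is in the right spirit but is not a proof; in particular ``bounding the at most $\sqrt{n}(\lambda-t)$ summands by their value at $t$'' already costs you a factor $\sqrt{n}(\lambda-t)$ that has to be reabsorbed, and the exact correction factor $\exp\big(\lambda(\lambda-t)(\tfrac{4}{1-n/N}+1+\tfrac{4n^3}{3(N-n)^3})\big)$ does not drop out of the manipulations you describe. The paper's mechanism here is cleaner and would graft directly onto your pointwise bound: with $h(x)=ax^2+bx^4$ convex, use $h(x)\ge h(u)+(x-u)h'(u)$ to turn the sum $\sum_{j\ge k_0}\exp(-n\,h(\lambda_j/\sqrt n))$ into a geometric series, then bound $(1-e^{-h'(u)})^{-1}\le M/h'(u)$ and $h'(u)\ge 2au$, $h'(u)\le(2a+4b)u$; this is exactly what produces the stated factor $2a+4b=\tfrac{4}{1-n/N}+1+\tfrac{4n^3}{3(N-n)^3}$ and explains where $\psi_0$ enters (through $v_0=2a+4b\le 4/\psi_0+\tfrac43(1-\psi_0)^2/\psi_0^3$).
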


We are also able to obtain an analogue of Bennett's inequality by using an important representation of the 
hypergeometric distribution as a sum of independent Bernoulli random variables with different means.
This representation results from a special case of results established by Vatutin and Mikha{\u\i}lov \citep{MR681461} 
(also see Ehm \citep{MR1093412}, Theorem A, and Pitman \citep{MR1429082}). 

\begin{ehmthm}
  If $1 \leq n \leq D \wedge (N-D)$, then 
  \begin{align}
    S_{n,D,N} =_d \sum_{i=1}^n Y_i \label{ehm:representation}
  \end{align}
  where $Y_i \sim \mbox{Bernoulli} (\pi_i)$ are independent.
\end{ehmthm}



We may use this representation along with Bennett's inequality to obtain a Bennett-type exponential bound for Hypergeometric random variables
(this bound was also discussed earlier in \cite{greenewellner2015finite}, though without proof). The proof of this claim is short, so we will provide it here.
\begin{thm}\label{thm:bennettanalogue}    
Suppose $S_{n,D,N}\sim\text{Hypergeometric}(n,D,N)$ with $1 \leq n \leq D \wedge (N-D)$. Define $\mu_N := D/N$,
$\sigma_N^2 := \mu_N (1-\mu_N)$, and $1-f_n := 1- (n-1)/(N-1)$ is the finite-sampling correction factor.
Then for all $\lambda>0$ 
\begin{align}
P( \sqrt{n} (\bar{X}_{n,D,N} - \mu_N ) > \lambda ) 
\leq \exp \left ( - \frac{\lambda^2}{2 \sigma_N^2 (1-f_n)} \psi \left ( \frac{\lambda}{\sqrt{n} \sigma_N^2 (1-f_n)} \right ) \right) \label{hyper:bennett}
\end{align}
where $\psi (\lambda) := (2/\lambda^{2}) h(1+\lambda)$ and $h(\lambda) := \lambda(\log \lambda -1) +1$.
\end{thm}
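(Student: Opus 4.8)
The plan is to derive \eqref{hyper:bennett} from the Hypergeometric Representation Theorem \eqref{ehm:representation} together with the general, not-necessarily-identically-distributed, form of Bennett's inequality; this is exactly why the hypothesis $1 \le n \le D \wedge (N-D)$ is imposed. By \eqref{ehm:representation} we may write $S_{n,D,N} =_d \sum_{i=1}^n Y_i$ with $Y_i \sim \mathrm{Bernoulli}(\pi_i)$ independent. Put $Z_i := Y_i - \pi_i$, so the $Z_i$ are independent, have mean zero, and satisfy $Z_i \le 1-\pi_i \le 1$ almost surely. Although the representation does not identify the individual $\pi_i$, it pins down the aggregates that matter: $\sum_{i=1}^n \pi_i = E(S_{n,D,N}) = n\mu_N$ and $\sum_{i=1}^n E Z_i^2 = \sum_{i=1}^n \pi_i(1-\pi_i) = \mathrm{Var}(S_{n,D,N})$, and by \eqref{HG:VARIANCE} this last quantity equals $v := n\sigma_N^2(1-f_n)$. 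This is the step at which the finite-sampling correction factor $1-f_n$ enters the bound.

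Next I would apply Bennett's inequality (see \cite{bennett1962probability}; cf.\ \cite{MR838963}) in the form: if the $Z_i$ are independent with $Z_i \le b$ a.s.\ and $\sum_i E Z_i^2 = v$, then $P\big(\sum_{i=1}^n Z_i \ge s\big) \le \exp\big(-(v/b^2)\, h(1 + bs/v)\big)$ for all $s > 0$, where $h(\lambda) = \lambda(\log\lambda - 1) + 1$ (note $h(1+u) = (1+u)\log(1+u) - u$, the usual Bennett function). Here $b = 1$. Since $\sqrt{n}(\bar{X}_{n,D,N} - \mu_N) = n^{-1/2}(S_{n,D,N} - E S_{n,D,N}) = n^{-1/2}\sum_{i=1}^n Z_i$, the event in \eqref{hyper:bennett} is $\{\sum_i Z_i > \sqrt{n}\lambda\}$, so taking $s = \sqrt{n}\lambda$ gives
\[
P\big(\sqrt{n}(\bar{X}_{n,D,N} - \mu_N) > \lambda\big) \le \exp\Big(-v\, h\Big(1 + \frac{\sqrt{n}\lambda}{v}\Big)\Big).
\]

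It then remains only to rewrite the exponent in the $\psi$-form of \eqref{hyper:bennett}. From $\psi(x) = (2/x^2)h(1+x)$ we have $h(1+x) = (x^2/2)\psi(x)$; taking $x = \sqrt{n}\lambda/v$ turns the exponent into $v \cdot \tfrac12 (\sqrt{n}\lambda/v)^2\,\psi(\sqrt{n}\lambda/v) = (n\lambda^2/(2v))\,\psi(\sqrt{n}\lambda/v)$, and substituting $v = n\sigma_N^2(1-f_n)$ collapses this to $\frac{\lambda^2}{2\sigma_N^2(1-f_n)}\,\psi\big(\frac{\lambda}{\sqrt{n}\sigma_N^2(1-f_n)}\big)$, which is the claim. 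As the paper anticipates, the argument is short; the only step that is not purely mechanical is the observation in the first paragraph that Bennett's inequality needs only a bound on $\sum_i E Z_i^2$, and that although the representation leaves the $\pi_i$ unknown, $\sum_i \pi_i(1-\pi_i)$ is forced to equal the known hypergeometric variance $n\sigma_N^2(1-f_n)$ — which is precisely the source of the finite-sampling factor. Everything else is the routine algebra of converting $h$ into $\psi$, together with care to invoke Bennett's inequality in its non-i.i.d.\ form with the correct one-sided bound $b=1$.
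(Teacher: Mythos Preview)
Your proposal is correct and follows essentially the same route as the paper's proof: invoke the Hypergeometric Representation Theorem, apply the general (non-i.i.d.) Bennett inequality to the centered Bernoulli summands with one-sided bound $1$, and identify $\sum_i \pi_i(1-\pi_i)$ with the hypergeometric variance $n\sigma_N^2(1-f_n)$. The only cosmetic difference is that you write Bennett's bound in the $h$-form and then convert to $\psi$, whereas the paper writes it directly in the $\psi$-form; the substance is identical.
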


\begin{proof}[{\bf Proof}]
Under the hypotheses it follows from \eqref{ehm:representation} that
\begin{align}
P\left( \sqrt{n} (\bar{X}_{n,D,N} - \mu_N ) > \lambda \right) 
&=
P\left( n^{-1/2} \sum_{i=1}^n(Y_i -\mu_i)   > \lambda \right)  \nonumber \\
&\leq 
\exp\left(-\frac{\lambda^2}{2\left(\frac{\sum_1^n \pi_i (1-\pi_i)}{n}\right)}\psi\left(\frac{\lambda\cdot n^{-1/2}}{\left(
\frac{\sum_1^n \pi_i (1-\pi_i)}{n}\right)}\right)\right) \label{application:ben}
\\
&=
\exp\left(-\frac{\lambda^2}{2\left(\frac{n\mu_N(1-\mu_N)(1-f_n)}{n}\right)}\psi\left(
\frac{\lambda\cdot n^{-1/2}}{\left(\frac{n\mu_N(1-\mu_N)(1-f_n)}{n}\right)}\right)\right) 
\nonumber \\
&=
\exp \left ( - \frac{\lambda^2}{2 \sigma_N^2 (1-f_n)} \psi \left ( \frac{\lambda}{\sqrt{n} \sigma_N^2 (1-f_n)} \right ) \right) \ . \nonumber
\end{align}

Note that \eqref{application:ben} follows by applying Bennett's inequality (his general inequality, rather than the binomial specialization), 
which is applicable since each $Y_i$ is independent $Bernoulli(\mu_i)$ and hence $Y_i - \mu_i \leq 1$ a.s. for $1 \leq i \leq n$. This gives the bound.
\end{proof}
Since $\psi (v) \ge 1/(1+ v/3)$ for all $v\ge 0$ (Shorack and Wellner \citep{MR838963}, proposition 1, page 441),
 Theorem~\ref{thm:bennettanalogue} immediately yields following Bernstein type tail bound.
\begin{coro}\label{bernsteinanalogue}  
With the same assumptions and notation as in Theorem~\ref{thm:bennettanalogue},
\begin{align}
P( \sqrt{n} (\bar{X}_{n,D,N} - \mu_N ) > \lambda ) 
\leq \exp \left ( - \frac{\lambda^2/2}{ \sigma_N^2 (1-f_n) + \frac{\lambda}{3 \sqrt{n}}}  \right) . \label{hyper:bernstein}
\end{align}
\end{coro}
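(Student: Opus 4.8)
The plan is to derive \eqref{hyper:bernstein} directly from the Bennett-type bound \eqref{hyper:bennett} of Theorem~\ref{thm:bennettanalogue} by substituting the elementary lower bound $\psi(v) \ge 1/(1+v/3)$, valid for all $v \ge 0$ (Shorack and Wellner \citep{MR838963}, Proposition 1, p.~441). First I would recall that Theorem~\ref{thm:bennettanalogue} gives, for all $\lambda > 0$, the inequality $P(\sqrt{n}(\bar X_{n,D,N} - \mu_N) > \lambda) \le \exp(-(2\sigma_N^2(1-f_n))^{-1}\lambda^2\,\psi(v))$ with argument $v := \lambda/(\sqrt n\,\sigma_N^2(1-f_n)) \ge 0$. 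Since $\psi$ enters inside $\exp(-\,\cdot\,)$, replacing $\psi(v)$ by the smaller quantity $1/(1+v/3)$ can only increase the right-hand side, so the resulting expression is still a valid upper bound for the tail probability.

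The remaining step is purely algebraic: the exponent becomes
\[
-\frac{\lambda^2}{2\sigma_N^2(1-f_n)}\cdot\frac{1}{1+\dfrac{\lambda}{3\sqrt n\,\sigma_N^2(1-f_n)}}
= -\frac{\lambda^2}{2\sigma_N^2(1-f_n)+\dfrac{2\lambda}{3\sqrt n}}
= -\frac{\lambda^2/2}{\sigma_N^2(1-f_n)+\dfrac{\lambda}{3\sqrt n}},
\]
where the middle equality clears the nested fraction by multiplying numerator and denominator by $\sigma_N^2(1-f_n)$. This is exactly the exponent appearing in \eqref{hyper:bernstein}, and the assumptions and notation are inherited verbatim from Theorem~\ref{thm:bennettanalogue}, so nothing further needs to be checked.

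There is essentially no genuine obstacle here: the corollary is a one-line substitution followed by the simplification displayed above. The only point that deserves a word of care is the direction of the inequality — a \emph{pointwise lower} bound on $\psi$ yields an \emph{upper} bound on the probability because of the leading minus sign — and, to keep the derivation self-contained, the single auxiliary fact $\psi(v)(1+v/3)\ge 1$ for $v\ge 0$, i.e. $2(1+v/3)h(1+v)\ge v^2$ with $h(u) = u(\log u - 1)+1$. That last inequality follows from the convexity of $h$ together with $h(1)=h'(1)=0$ and $h''(u)=1/u$, but since it is already cited from Shorack and Wellner there is no need to reproduce the argument.
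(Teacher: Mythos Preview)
Your proposal is correct and matches the paper's own argument exactly: the paper simply notes that $\psi(v)\ge 1/(1+v/3)$ for all $v\ge 0$ (citing Shorack and Wellner) and states that Theorem~\ref{thm:bennettanalogue} then immediately yields the Bernstein-type bound. Your write-up even supplies the algebraic simplification the paper leaves implicit.
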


Detailed proofs of the bounds \eqref{HYPERGEOMETRIC:LP_BOUND} and \eqref{TAL:HYPER:BOUND}
are provided in section \ref{sec:proofs}. The proofs of these two bounds are complicated and do not proceed by the 
Cram{\'{e}}r - Chernoff method. 
The proof of \eqref{HYPERGEOMETRIC:LP_BOUND} adapts the argument of Le{\'{o}}n and Perron 
for the binomial distribution to the hypergeometric case.
In adapting the argument, we derive an analogue of a well known binomial tail probability bound going 
back to at least Feller \citep[pp.~150-151]{FELLER1}: see
Lemma \ref{hgbinanalogue} for details. The proof of \eqref{TAL:HYPER:BOUND} adapts Talagrand's 
argument to the hypergeometric setting.  
The tools developed in the course of the proofs are specialized to the analysis of binomial coefficients. 
As such, they may prove useful in understanding how to analyze the tail of distributions such as the multinomial and 
multivariate hypergeometric by providing guidance for parametrizations which could appear in those 
settings after the application of Stirling's formula.

Note that if $N \nearrow \infty$ with $n$ fixed, \eqref{HYPERGEOMETRIC:LP_BOUND} yields  
a slight improvement of \eqref{LEON_PERRON_BINOMIAL_BOUND}, the bound of Le{\'{o}}n and Perron,
since it contains a quartic term in the exponential.
Recovery of this sort is exactly the behavior we would expect in the limit, 
since \eqref{LEON_PERRON_BINOMIAL_BOUND} bounds binomial probabilities 
and as $N\nearrow \infty$ with $n/N \rightarrow 0$ the hypergeometric law 
converges to the binomial.  
A similar limiting argument shows we may recover \eqref{TAL:BIN:BOUND} from \eqref{TAL:HYPER:BOUND}
as well as \eqref{ineq:bennett} from \eqref{hyper:bennett}.

Also observe that the bounds \eqref{HYPERGEOMETRIC:LP_BOUND} and \eqref{TAL:HYPER:BOUND} contain terms involving $1-n/N$, which incorporates 
information about the proportion of the population sampled into the bound.
This sampling fraction is sharper than the improvement conjectured in  Serfling's bound: $1-n/N < 1-(n-1)/(N-1) < 1-(n-1)/N$. 
For $\lambda > (\sqrt{n} (N-n))/(2 (2 N-n))$, the expression outside the
exponential terms in \eqref{HYPERGEOMETRIC:LP_BOUND} exceeds the non-exponential expression in \eqref{LEON_PERRON_BINOMIAL_BOUND}. However,
for such $\lambda$ the increase  in magnitude is compensated for by the $1-n/N$ term appearing in the exponent. 

Figure \ref{fig:figure1} demonstrates the benefit of including a finite-sampling correction factor inside the exponential term: 
when enough of the the population is sampled, the difference between the binomial and hypergeometric bounds 
can differ by as much as $1/4$ for specific deviation values. 
Figure \ref{fig:figure2} compares the performance of the new hypergeometric bounds to each other and to the bounds of Serfling
\eqref{serfbnd} and Hush and Scovel \eqref{hushscovelbound}. It also 
provides some insight as to when \eqref{HYPERGEOMETRIC:LP_BOUND} out-performs \eqref{hyper:bennett}
and vice-versa. The finite-but-unspecified constants appearing in \eqref{TAL:HYPER:BOUND} prevent its inclusion in the figures. 
Additionally, the constants are not immediately comparable to 
those in \eqref{TAL:BIN:BOUND} because they depend on how one chooses to truncate the sampling fraction and population proportion. 
The bound \eqref{TAL:HYPER:BOUND} demonstrates that the factor $1-n/N$ in the exponential may apply for all $\lambda > 0$ 
as long as a suitable leading constant is selected.

Chatterjee \citep{MR2288072} used Stein's method to derive very general concentration bounds for statistics based on 
random permutations.  For example, here is a restatement of his Proposition 1.1: 
let $\{ a_{i,j} : 1 \le i,j \le N\}$ 
be a collection of numbers in $[0,1]$ and let 
$S  \equiv \sum_{i=1}^N a_{i, \pi(i)} $ where $\pi \sim \ $uniformly on all 
permutations of $\{1, \ldots , N\}$.  Then
$$
P( | S - E(S) | \ge t ) \le 2 \exp \left ( - \frac{t^2}{4 E(S) + 2t} \right ) \ \ \ \mbox{for all} \ \ t > 0 .
$$
The statistic $S$ was first studied by Hoeffding \citep{MR0044058}.
The special case which yields the setting of Serfling's inequality is $a_{i,j} := 1_{[i \le n ]} c_j$ 
for $1 \le i,j\le N$ where $1 \le n < N$.  Then $S = \sum_{i=1}^n c_{\pi(i)} \stackrel{d}{=} S_n$ where 
$S_n = \sum_{i=1}^n X_i$ is as defined in the first paragraph of section 1 above.  In this special case
$E(S) = n\overline{c}_N = n \mu$ and Chatterjee's (Bernstein type) bound becomes
\begin{align}
P( n^{-1/2}  (S_n - n\mu) \ge \lambda ) \leq \exp \left ( - \frac{\lambda^2}{ 4 \overline{c}_N + 2 \lambda / \sqrt{n}} \right ) 
\label{chatterjee:bound}
\end{align}
for all $\lambda > 0$.  

Goldstein and I\c{s}lak
\citep{MR3162712} recently used a variant of Stein's method 
to give another inequality for the tails of Hoeffding's statistic $S$:
\begin{eqnarray}
P( | S - E(S) | > t) \le 2 \exp \left ( - \frac{t^2}{2 (\sigma_A^2 + 8 \| a \| t )} \right ) 
\label{GoldsteinIslak}
\end{eqnarray}
where $\| a \| \equiv \max_{i,j \le N}  | a_{i,j} - a_{i \cdot } | $, 
\begin{eqnarray*}
a_{i \cdot}  & = & \frac{1}{N} \sum_{j=1}^N a_{ij} ,\ \ \ 
a_{\cdot j}  =   \frac{1}{N} \sum_{i=1}^N a_{ij} , \ \ \ 
a_{\cdot \cdot }  =  \frac{1}{N^2} \sum_{i,j=1}^N a_{ij} ,  \ \ \mbox{and} \\
\sigma_A^2 & = & \frac{1}{N-1} \sum_{i,j \le N} (a_{ij} - a_{i\cdot} - a_{\cdot j} + a_{\cdot \cdot} )^2 .
\end{eqnarray*}
Specializing (\ref{GoldsteinIslak}) to the setting of Serfling's inequality (with  $a_{i,j} := 1_{[i \le n ]} c_j$) yields
\begin{eqnarray}
P(n^{-1/2} | S_n - n \overline{c}_{N} | > \lambda) 
\le 2 \exp \left ( - \frac{\lambda^2/2}{ \sigma_c^2 (1-f_n)  + 8 \| c \| \lambda /\sqrt{n} } \right ) 
\label{GoldsteinIslakSpecCase1}
\end{eqnarray}
where $\sigma_c^2 = N^{-1} \sum_{j=1}^N (c_j - \overline{c}_N)^2$ 
and $\| c \| \equiv \max_{j\le N} | c_j - \overline{c}_N |$.  
This Bernstein type bound is in the same setting as Serfling's inequality, but the bound has an explicit dependence on 
$\sigma_c^2$.  This is similar to the bound of Bardenet and Maillard \eqref{expbnd:bm2} which incorporates variance
information through the parameter $\gamma^2$.  

Further specialization of (\ref{GoldsteinIslakSpecCase1}) to the (one-sided) hypergeometric setting 
(with $c_j = 1 \{ j \le D\}$ for $j=1, \ldots , N$) yields
\begin{eqnarray}
\lefteqn{P(n^{-1/2} ( S_n - n (D/N) ) > \lambda) } 
\label{GoldsteinIslakSpecCase2}\\
& \le & \exp \left ( - \frac{\lambda^2/2}{ (D/N)(1-D/N) (1-f_n)  + 8 \{(D/N) \vee (1-D/N)\} \lambda /\sqrt{n} } \right ) \nonumber\\
& = &  \exp \left ( - \frac{\lambda^2/2}{ \sigma_N^2 (1-f_n)  + 8 \{ \mu_N \vee (1- \mu_N)\} \lambda /\sqrt{n} } \right ) . \nonumber
\end{eqnarray}
This bound differs from the bound given in \eqref{hyper:bernstein} (the Bernstein type corollary of 
Theorem~\ref{thm:bennettanalogue}) only through the second term in the denominator 
inside the exponential:  note that $8 \{ \mu_N \vee (1- \mu_N)\} \ge 4 > 1/3$. 

Comparing the Bernstein type bounds (\ref{GoldsteinIslakSpecCase1}) and (\ref{GoldsteinIslakSpecCase2})
to Serfling's inequality \eqref{serfling:hypergeometric:spec} with $b=1$ and $a=0$, 
we see that the bound of Goldstein and I{\c{s}}lak is smaller than
Serfling's bound when $\lambda \leq \sqrt{n}/(32(\overline{c}_N \vee (1-\overline{c}_N)))(1-4\sigma_N^2+4\sigma_N^2f_n-f_n^*)$.
Similarly, we see that Chatterjee's bound \eqref{chatterjee:bound} 
is smaller than Serfling's bound only if $\overline{c}_N \le (1 - (n-1)/N)/8$ and then 
$\lambda \le \sqrt{n} ( 1 - (n-1)/N - 8 \overline{c}_N )/4$.
Figure \ref{fig:figure3} gives a comparison of 
Serfling's bound, Chatterjee's bound, Bardenet and Maillard's bound \eqref{expbnd:bm2}, Goldstein and I{\c{s}}lak's bound (\ref{GoldsteinIslakSpecCase2}),
and the Bennett type bound \eqref{hyper:bennett} in the further hypergeometric special case 
with $n=100$, $N=2001$, and $D \in \left\{101, 200\right\}$;  note that in the case $D=200$,
$\overline{c}_N = D/N \approx .10$ so $8\overline{c}_N \approx .8$ 
while $1-(n-1)/N) \approx 1 - .05$ so the first condition holds
and then Chatterjee's bound should win approximately when $\lambda \le \sqrt{n} (.15)/4 \approx 1.5/4$.  

Comparing (\ref{GoldsteinIslakSpecCase2})
to \eqref{chatterjee:bound},  we find the Goldstein-I{\c{s}}lak bound improves Chatterjee's bound when
 $\lambda \leq \sqrt{n}(2\overline{c}_N-\sigma_N^2(1-f_n))/(8(\overline{c}_N \vee (1-\overline{c}_N))-1)$. 
 In figure \ref{fig:figure3}, this region
is approximately equal to $\lambda \leq 0.08$  when $D=101$ and $\lambda \leq 0.18$ when $D=200$. 
From Figure~\ref{fig:figure3}(b) we see that the improvement of Goldstein and I{\c{s}}lak's bound to those of
Chatterjee and Serfling is very small in this region. 
From Figure~\ref{fig:figure3}(a) we see that Chatterjee's bound is smaller than 
both the Goldstein and I{\c{s}}lak bound (\ref{GoldsteinIslakSpecCase2}) as well as Serfling's bound, when $D/N$ is small and 
$0.08 \le \lambda \le \sqrt{n}(.55)/4 \approx 1.37$, 
but that all three are improved by Bardenet and Maillard's bound \eqref{expbnd:bm2}
and the Bennett type bound \eqref{hyper:bennett}.

\begin{figure}
  \begin{subfigure}{\textwidth}
    \centering
    \includegraphics[width=\linewidth,height=7.5cm,keepaspectratio]{./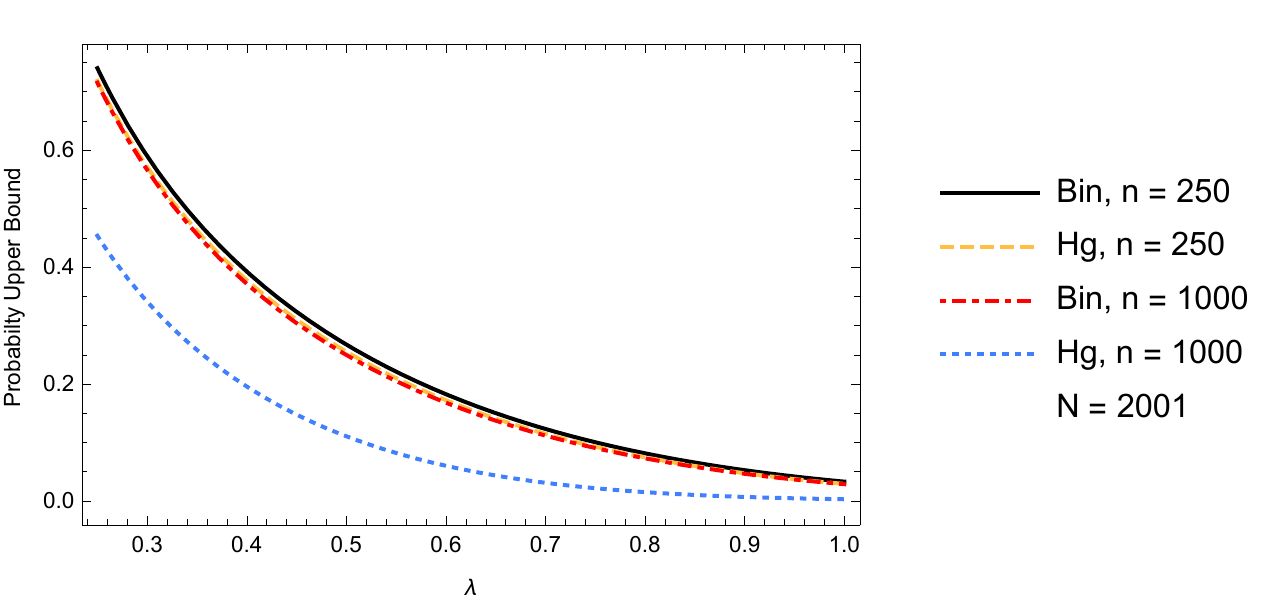}
    \caption{1a}
    \label{fig:sfig1}
  \end{subfigure}\\
  \begin{subfigure}{\textwidth}
    \centering
    \includegraphics[width=\linewidth,height=7.5cm,keepaspectratio]{./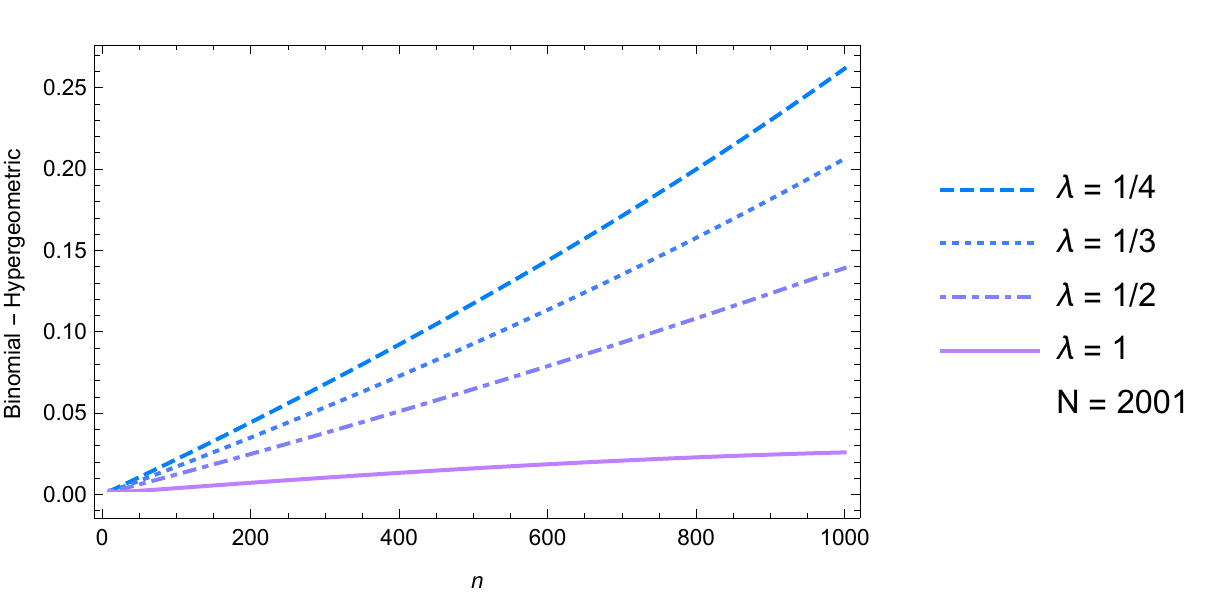}
    \caption{1b}
    \label{fig:sfig2}
  \end{subfigure}
\caption{Comparison of Le{\'{o}}n and Perron's binomial bound \eqref{LEON_PERRON_BINOMIAL_BOUND}  
to the new hypergeometric bound \eqref{HYPERGEOMETRIC:LP_BOUND}. 
In sub-figure \ref{fig:sfig1}, the sample size $n$ is set to $250$ and $1000$ for both
bounds. The population size $N$ is taken to be $2001$ in both cases.  In the legend, lines with the description ``Bin'' correspond to 
the binomial bound of Le{\'{o}}n and Perron \eqref{LEON_PERRON_BINOMIAL_BOUND}, while lines with the description ``Hg'' 
correspond to the new hypergeometric bound \eqref{HYPERGEOMETRIC:LP_BOUND}.
In sub-figure \ref{fig:sfig2}, we plot the difference between  Le{\'{o}}n and Perron's binomial bound 
\eqref{LEON_PERRON_BINOMIAL_BOUND}  
to the new hypergeometric bound \eqref{HYPERGEOMETRIC:LP_BOUND} 
at the fixed deviation-values $\lambda \in \left\{1/4,1/3,1/2,1\right\}$.
We let $n$ vary between $10$ and $1000$ to illustrate the impact of introducing the finite-sampling 
correction factor into the exponential term of the probability bound.
} 
\label{fig:figure1}
\end{figure}

\begin{figure}
  \begin{subfigure}{\textwidth}
    \centering
    \includegraphics[width=\linewidth,height=7.5cm,keepaspectratio]{./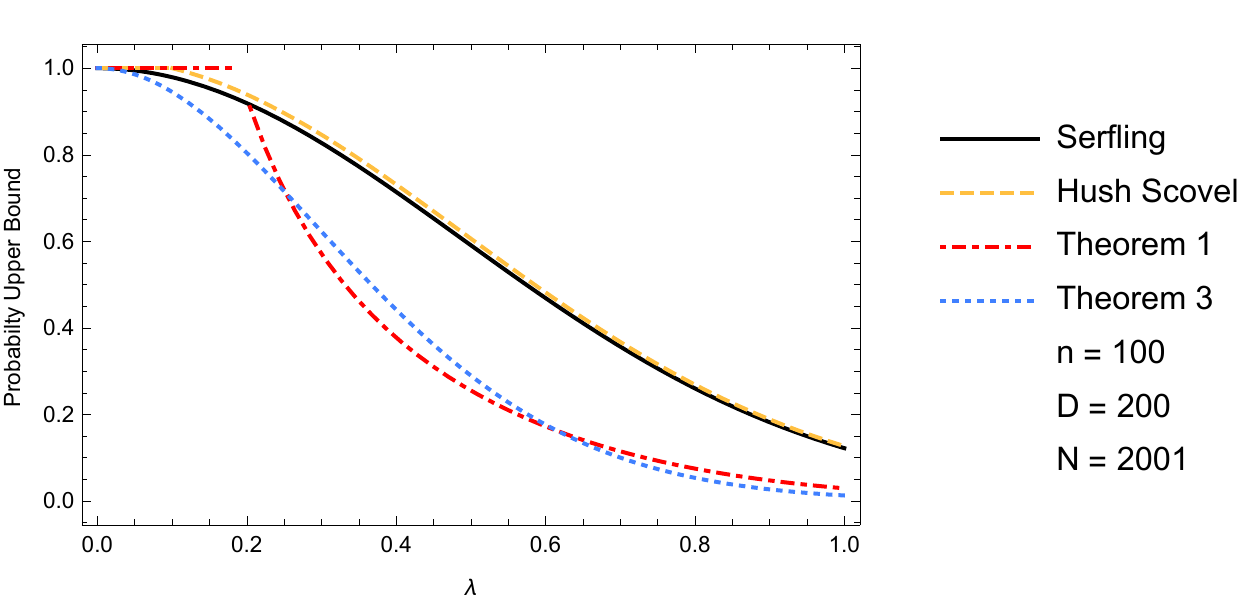}
    \caption{2a}
    \label{fig2:sfig1}
  \end{subfigure}
  \begin{subfigure}{\textwidth}
    \centering
    \includegraphics[width=\linewidth,height=7.5cm,keepaspectratio]{./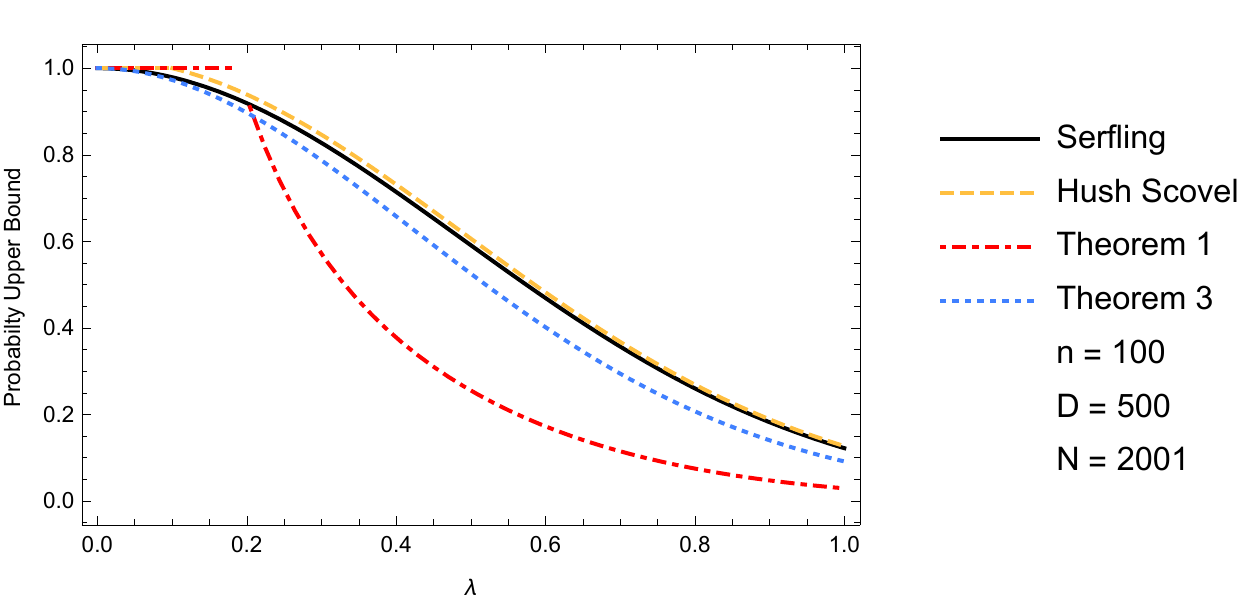}
    \caption{2b}
    \label{fig2:sfig2}
  \end{subfigure} \\
\caption{These plots compare the various exponential bounds for the Hypergeometric distribution. 
In these plots we fix the population to $N=2001$, and the sample size to $n=100$. 
The plots consider a setting with smaller variances by setting $D=200$ in the first plot (so $D/N=1/10$)
and $D=500$ in the second (so $D/N=1/4$). 
We see that the bound of Theorem \ref{thm:lpanalogue} \eqref{HYPERGEOMETRIC:LP_BOUND}  
performs comparably with the bound of Theorem \ref{thm:bennettanalogue}  \eqref{hyper:bennett} 
in the setting $D=200$ (2a), and surpasses it when $D=500$ (2b). This suggests that when $D/N < 1/10$, 
the bound of Theorem \ref{thm:bennettanalogue}  will perform better than the bound of Theorem \ref{thm:lpanalogue}, 
and when $1/10 \leq D/N < 1/2$, the converse. 
} 
\label{fig:figure2}
\end{figure}

\begin{figure}
  \begin{subfigure}{\textwidth}
    \centering
    \includegraphics[width=\linewidth,height=7.5cm,keepaspectratio]{./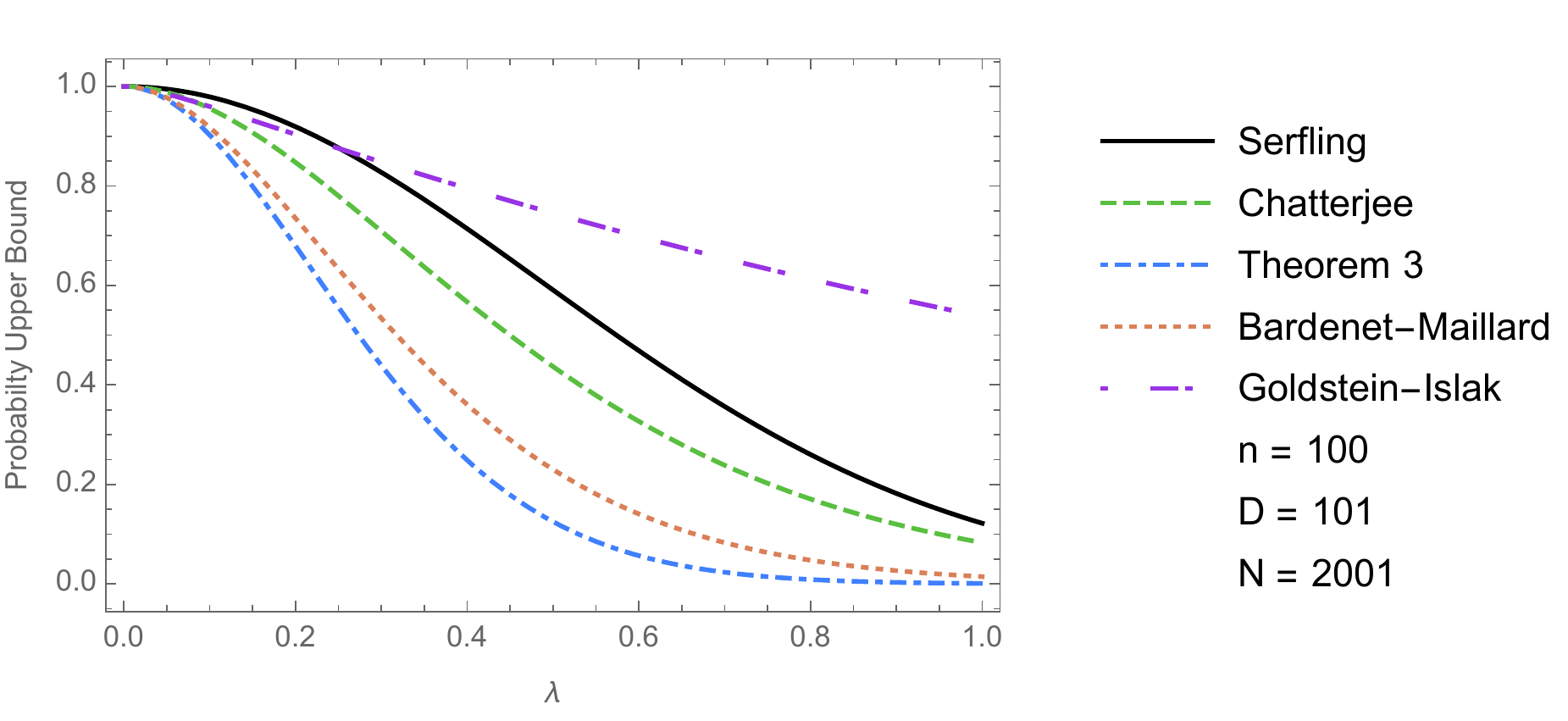}
    \caption{1a}
    \label{fig3:sfig1}
  \end{subfigure}\\
  \begin{subfigure}{\textwidth}
    \centering
    \includegraphics[width=\linewidth,height=7.5cm,keepaspectratio]{./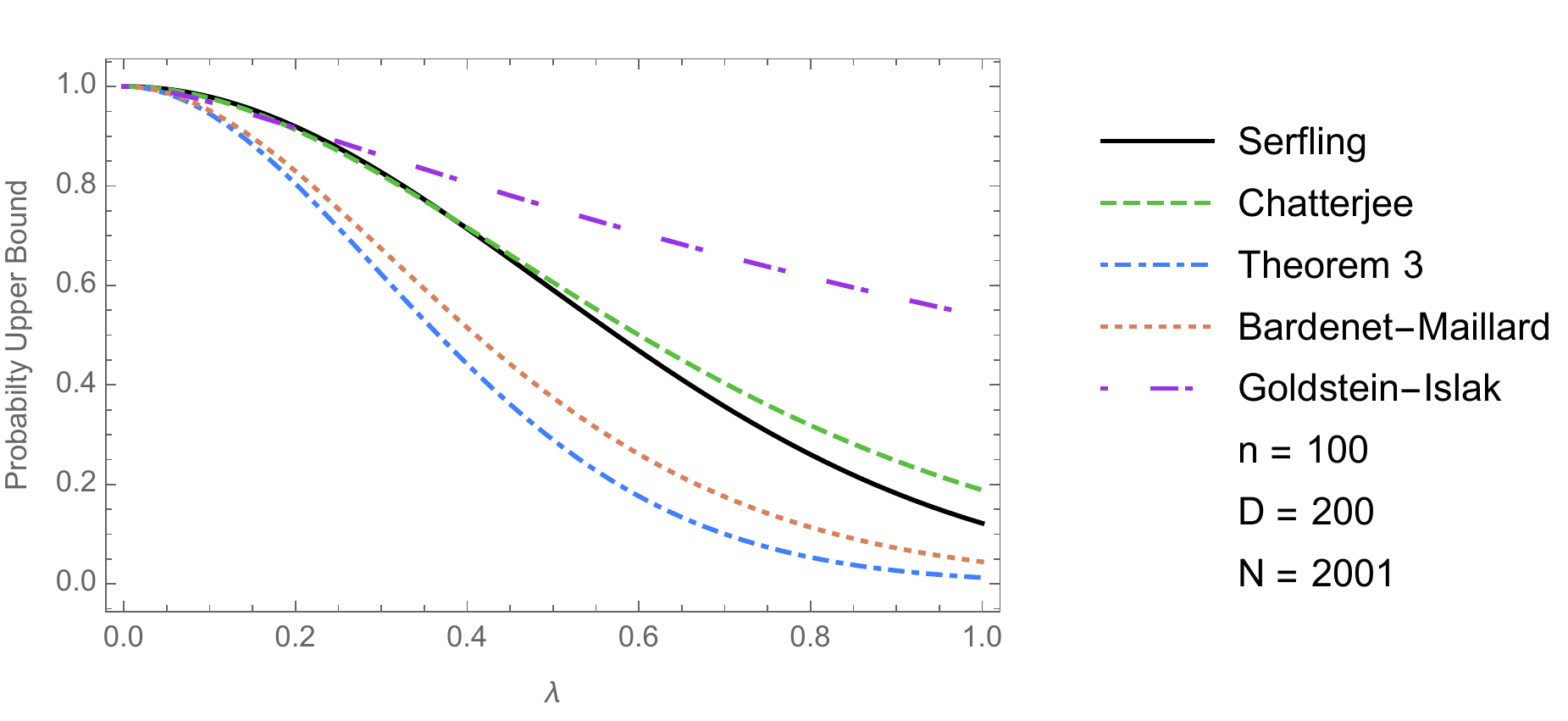}
    \caption{1b}
    \label{fig3:sfig2}
  \end{subfigure}
\caption{Comparison of Serfling's bound \eqref{serfling:hypergeometric:spec}, Chatterjee's bound \eqref{chatterjee:bound}, 
and the bound of Goldstein and I{\c{s}}lak \eqref{GoldsteinIslakSpecCase2},
Bardenet and Maillard's bound  \eqref{expbnd:bm2},
and Theorem \ref{thm:bennettanalogue}. In sub-figure \ref{fig3:sfig1}, the sample size is $n=100$, the population size is $N=2001$,
and the number of successes is $D=101$. In sub-figure \ref{fig3:sfig2}, the sample size remains $n=100$, 
the population size remains $N=2001$, but $D=200$.} 
\label{fig:figure3}
\end{figure}

\section{Convex Order for the Hypergeometric Distribution}
\label{sec:ConvexOrder}

When sampling without replacement from a finite population concentrated on $[0,1]$, the hypergeometric distribution occupies 
an extreme position with respect to convex order. 
This extreme position offers additional reason to give the hypergeometric distribution special consideration,
since we might hope to adapt bounds for its tail to the tails of the random variables it dominates through the convex order.

The extreme position of the hypergeometric distribution was essentially proved by Kemperman \citep{kemperman1973moment}. 
In his paper Kemperman studied (among many other things) finite populations majorized by nearly Rademacher populations; 
through transformation, this describes the hypergeometric setting.
We say nearly Rademacher since Kemperman's analysis resulted in majorizing populations 
consisting entirely of $-1's$ and $1's$ with the exception of a single exceptional element $\alpha$ with $-1 < \alpha < 1$. 

Here, we revisit his argument, modified so it applies to a population with elements between $0$ and $1$. 
We then provide an extension of the argument in order to obtain a hypergeometric population which sub-majorizes this initial population.
Since the extension follows naturally from Kemperman's majorization result, we begin with his procedure here. 
We start with relevant definitions from Marshall, Olkin, and Arnold \citep{marshall2010inequalities}.

\begin{defn}
For a vector $\bold{x} = (x_1,\dots,x_N) \in \mathbb{R}^N$, let
\[
x_{[1]} \geq x_{[2]} \geq \dots \geq x_{[N]}
\]
denote the components of $\bold{x}$ in decreasing order.
\end{defn}
\begin{defn}
For $\bold{x},\bold{y} \in \mathbb{R}^N$, 
\[
\bold{x} \prec \bold{y} \ \text{ if } \ 
\begin{cases}
\sum_{i=1}^k x_{[i]} \leq \sum_{i=1}^ky_{[i]}, \ k=1,\dots,N-1, \\
\sum_{i=1}^N x_{[i]} = \sum_{i=1}^Ny_{[i]} \\
\end{cases}
\]
where $\bold{x} \prec \bold{y}$ is read as ``$\bold{x}$ is majorized by $\bold{y}$''.
\end{defn}

\begin{defn}
For $\bold{x},\bold{y} \in \mathbb{R}^N$, 
\[
\bold{x} \prec_w \bold{y} \ \text{ if } \ 
\sum_{i=1}^k x_{[i]} \leq \sum_{i=1}^ky_{[i]}, \ k=1,\dots,N
\]
where $\bold{x} \prec_w \bold{y}$ is read as ``$\bold{x}$ is weakly sub-majorized by $\bold{y}$'' or, more briefly,
``$\bold{x}$ is sub-majorized by $\bold{y}$''.  
\end{defn}

Figure \ref{fig:figuremaj} provides an illustration of these definitions. 
In the following Lemma, we re-state Kemperman's procedure so it constructs a majorizing hypergeometric population.
See section 4, pages 165--168 in \citep{kemperman1973moment} for the original Rademacher argument.

\begin{figure}
    \centering
    \includegraphics[width=\linewidth,height=7.5cm,keepaspectratio]{./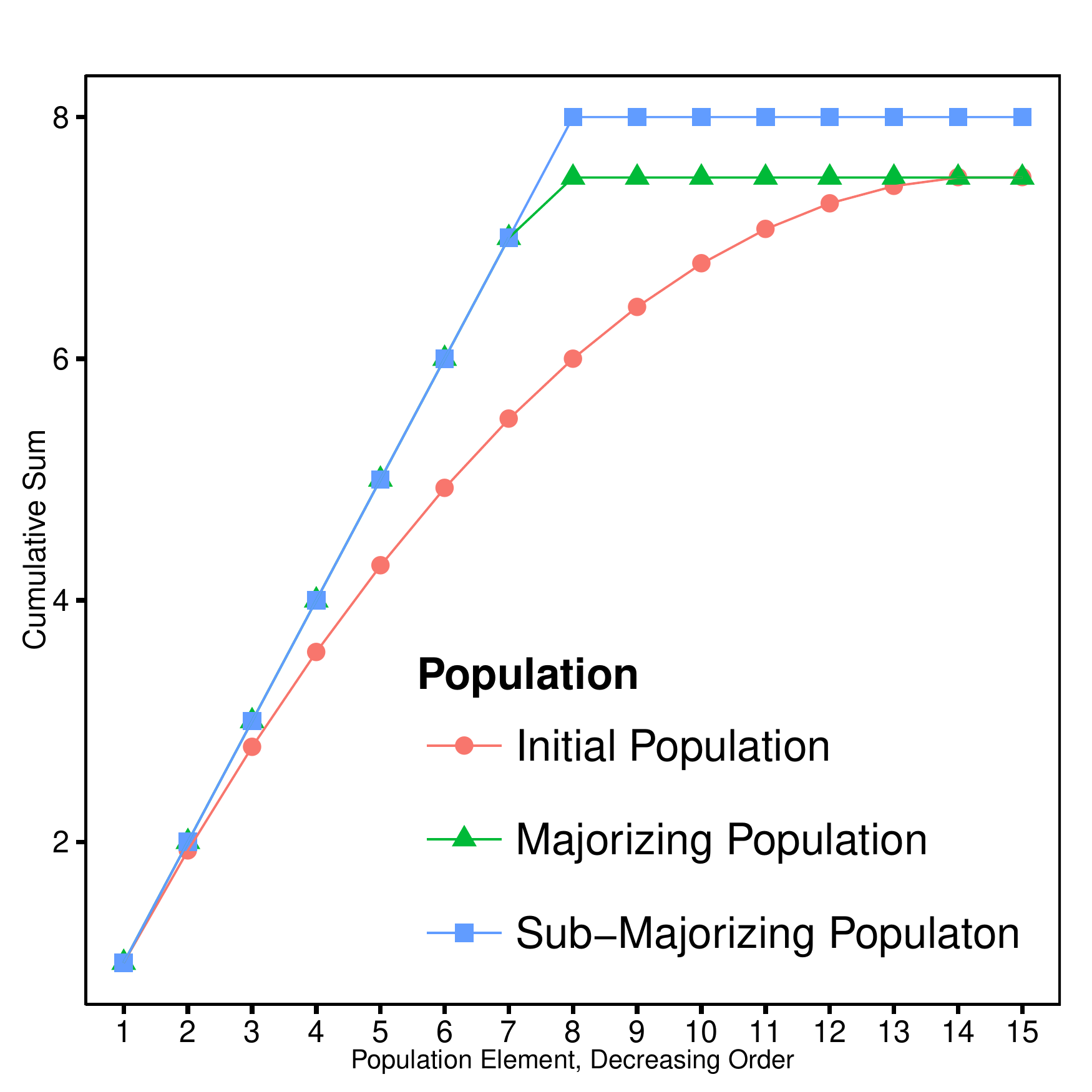}
    \caption{The initial population line in the display coresponds to $\bold{c}=\left\{0,1/14,2/14,\dots,13/14,1\right\}$. 
    The majorizing population contains seven $0's$, seven $1's$, and a single exceptional element of $1/2$. 
    The sub-majorizing population contains seven $0's$ and eight $1's$. In the display, each population is sorted in decreasing order; the corresponding
    lines show the cumulative sum of the ordered population elements.} 
    \label{fig:figuremaj}
\end{figure}

\begin{lem}\label{majlem1}
(\textbf{Kemperman} \citep{kemperman1973moment}) For any finite population 
$\bold{x}\in \mathbb{R}^N$, such that $0 \leq x_i \leq 1$ for all $1 \leq i \leq N$, there exists
a population $\bold{c} \in \mathbb{R}^N$, consisting only of $0$'s, $1$'s, 
and at most a single element between $0$ and $1$, which majorizes the original
population. In fact, $\bold{c}$ consists of $D$ $1$'s, $N-D-1$ $0$'s, and a number $\alpha \in [0,1)$ 
where $D$ and $\alpha$ are determined by 
$D= \lfloor N \bar{\bold{x}}_N\rfloor$, and $\alpha = N\bar{\bold{x}}_N - D$.
\end{lem}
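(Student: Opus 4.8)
The plan is to verify the two defining conditions of majorization for the explicit population $\bold{c}$ named in the statement, using nothing beyond $0 \le x_i \le 1$ and the identity $D + \alpha = N\bar{\bold{x}}_N$. First I would dispose of the bookkeeping and the boundary cases. Since $\bar{\bold{x}}_N \in [0,1]$ we have $D = \lfloor N\bar{\bold{x}}_N\rfloor \in \{0,1,\dots,N\}$, and by definition of the floor $\alpha = N\bar{\bold{x}}_N - D \in [0,1)$. The only genuinely degenerate case is $\bar{\bold{x}}_N = 1$: then every $x_i = 1$, $D = N$, and there is no room left for the $N-D-1$ zeros or for $\alpha$, so one simply takes $\bold{c} = \bold{x}$. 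Assuming henceforth $\bar{\bold{x}}_N < 1$, so $D \le N-1$, the vector $\bold{c}$ consisting of $D$ ones, one copy of $\alpha$, and $N-D-1$ zeros has length $D + 1 + (N-D-1) = N$ as required. (When $N\bar{\bold{x}}_N$ is an integer, $\alpha = 0$ and this collapses to $D$ ones and $N-D$ zeros, i.e.\ a genuine hypergeometric population with no exceptional element; this is still covered.)

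Next I would write the decreasing rearrangement of $\bold{c}$ explicitly. Since $1 \ge \alpha \ge 0$, we have $c_{[1]} = \dots = c_{[D]} = 1$, $c_{[D+1]} = \alpha$, and $c_{[D+2]} = \dots = c_{[N]} = 0$, so the cumulative sums $T_k := \sum_{i=1}^k c_{[i]}$ are given by $T_k = k$ for $1 \le k \le D$ and $T_k = D + \alpha$ for $D+1 \le k \le N$. In particular $T_N = D + \alpha = N\bar{\bold{x}}_N = \sum_{i=1}^N x_i = \sum_{i=1}^N x_{[i]}$, which is exactly the equality condition in the definition of $\prec$.

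It then remains to show $S_k := \sum_{i=1}^k x_{[i]} \le T_k$ for $k = 1,\dots,N-1$, and this splits into two ranges. For $1 \le k \le D$, each of the $k$ largest entries of $\bold{x}$ is at most $1$, so $S_k \le k = T_k$. For $D+1 \le k \le N-1$, the entries $x_{[k+1]},\dots,x_{[N]}$ are nonnegative, so $S_k \le S_N = \sum_{i=1}^N x_i = D + \alpha = T_k$. Together with the equality at $k=N$ this gives $\bold{x} \prec \bold{c}$, which is the assertion of the lemma.

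There is no serious obstacle in the argument: it is a direct consequence of $x_i \in [0,1]$ and the defining identity $D + \alpha = N\bar{\bold{x}}_N$. The only points that require attention are the boundary cases — the degenerate population with $\bar{\bold{x}}_N = 1$ and the coincidence $\alpha = 0$ when $N\bar{\bold{x}}_N \in \mathbb{Z}$ — and writing the decreasing rearrangement of $\bold{c}$ correctly so that the formulas for $T_k$ are valid for every $k$.
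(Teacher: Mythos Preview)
Your proof is correct and takes a genuinely different, more direct route than the paper. The paper proves the lemma by induction on $N$, running Kemperman's iterative algorithm (which sweeps through the vector, at each step replacing a consecutive pair $(m_i,m_{i+1})$ by either $(1,m_i+m_{i+1}-1)$ or $(0,m_i+m_{i+1})$) and verifying that the output has the right form and majorizes the input. You instead exploit the fact that the target $\bold{c}$ is already given explicitly in the statement: writing down its decreasing rearrangement yields the closed form $T_k = k$ for $k\le D$ and $T_k = D+\alpha$ for $k\ge D+1$, and then the two inequalities $S_k\le k$ (from $x_i\le 1$) and $S_k\le S_N = D+\alpha$ (from $x_i\ge 0$) finish the job in two lines. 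Your argument is shorter and more transparent for the lemma as stated; the paper's algorithmic proof, on the other hand, is constructive in a different sense---it exhibits $\bold{c}$ as the endpoint of a sequence of elementary two-point transfers, which is closer to Kemperman's original presentation and connects naturally to the Schur-convexity machinery used later in the section.
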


\begin{algorithm}\label{alg1}
  \caption{Kemperman's majorization algorithm}\label{majpop}
  \begin{algorithmic}[1]
    \Function{Majorize}{$ipop$}\Comment{$ipop$ is the input population}
      \State $plen \gets \text{length}(ipop)$
      \State $mpop \gets ipop$ \Comment{Make a copy of the input population to transform}
      \For{$i \in \left\{1,\dots,plen-1\right\}$}
        \State $csum \gets mpop[i]+mpop[i+1]$
        \If{$csum > 1$}
          \State $mpop[i] \gets 1$
          \State $mpop[i+1] \gets (csum-1)$
        \Else
          \State $mpop[i] \gets 0$
          \State $mpop[i+1] \gets csum$
        \EndIf
      \EndFor
      \State \textbf{return} $mpop$\Comment{mpop is now transformed into the desired population}
    \EndFunction
  \end{algorithmic}
\end{algorithm}

\begin{proof}[{\bf Proof}]
We update Kemperman's argument, and demonstrate his modified
algorithm described in the display produces the population claimed by Lemma~\ref{majlem1}.
Suppose first that $\bold{x} \in \mathbb{R}^2$, and $0 \leq x_1,x_2 \leq 1$. Identify $ipop = \bold{x}$ in the algorithm description.
Then $mpop = \bold{x}$, $plen = 2$, and $plen-1=1$. Hence, the ``for'' loop executes exactly once. 

Consider the operations in  the ``for'' loop. We compute $csum = mpop[1]+mpop[2] = x_1+x_2$.
If $csum > 1$, the first condition is met, and we set $mpop[1] = 1$ and $mpop[2] = csum -1$. Since $csum = x_1+x_2$, and $0 \leq x_1,x_2 \leq 1$
by assumption, we have $1 < csum \leq 2$. Hence $0 < csum -1 \leq 1$, and so $0 < mpop[2] \leq 1$. 
Observe, that $mpop[1] = 1 \geq x_1\vee x_2,$ $mpop[1]+mpop[2] = 1 + (x_1+x_2 -1) = x_1+x_2$, so that mpop now majorizes $\bold{x}$ as claimed.
If $csum \leq 1$, the algorithm sets $mpop[2] = x_1+x_2$, and $mpop[1] = 0$. Again, this satisfies the description in the lemma, since $0 \leq csum =x_1+x_2 \leq 1$.
Moreover, $mpop[2] = x_1+x_2 \geq x_1\vee x_2$ and $mpop[2]+mpop[1] = x_1+x_2+0 = x_1+x_2$, and so again $mpop$ majorizes the population $\bold{x}$.
This completes the base case. Observe that the exceptional element is in the final index of the vector.


For the inductive case, suppose Kemperman's algorithm works when $n=N$. We will show it holds for $n=N+1$.
Let $\bold{x} \in \mathbb{R}^{N+1}$ be a population whose elements are all between $0$ and $1$. 
Let $\bold{y} \in \mathbb{R}^N$ be constructed so that $y_i = x_i$ for $1\leq i \leq N$. Run the algorithm on $\bold{y}$. By the induction hypothesis,
this produces a vector $\bold{m} \in \mathbb{R}^N$, such that $m_i \in \left\{0,1\right\}$ for $1\leq i \leq N-1$. Moreover, $0 \leq m_N \leq 1$ by the induction hypothesis, and also $\bold{m}$
majorizes $\bold{y}$. 

Next, construct a vector $\bold{a} \in \mathbb{R}^2$ such that $a_1 = m_N$ and $a_2 = x_{N+1}$. Run the algorithm on $\bold{a}$. By the base case, this produces a new vector $\bold{b} \in \mathbb{R}^2$
such that $b_1 \in \left\{0,1\right\}$ and $0 \leq b_2 \leq 1$. Note also that $\bold{b}$ majorizes $\bold{a}$. 

Finally, construct a vector $\bold{c} \in \mathbb{R}^{N+1}$ such that $c_i = m_i$ for $1 \leq i \leq N-1$, $c_N=b_1$ and $c_{N+1}=b_2$. By construction, we have that $c_i \in \left\{0,1\right\}$ for 
$1 \leq i \leq N$ and $0 \leq c_{N+1} \leq 1$. Hence, if we can show that $\bold{c}$ majorizes $\bold{x}$ we are done. 
We first show that 
\[
\sum_{i=1}^{N+1} c_i = \sum_{i=1}^{N+1} x_i\ .
\]
Using the constructions, we have
\begin{align}
\sum_{i=1}^{N+1} c_i &= \left[\sum_{i=1}^{N-1} c_i\right] + c_{N} + c_{N+1} = \left[\sum_{i=1}^{N-1} m_i\right] + b_{1} + b_{2} 
= \left[\sum_{i=1}^{N-1} m_i\right] + m_{N} + x_{N+1}  \nonumber \\
&= \left[\sum_{i=1}^{N} m_i\right]  + x_{N+1} 
= \left[\sum_{i=1}^{N} y_i\right]  + x_{N+1}  
= \left[\sum_{i=1}^{N} x_i\right]  + x_{N+1} 
= \sum_{i=1}^{N+1} x_i\ , \nonumber
\end{align}
and so the summation claim holds. Next, pick $1 \leq k \leq N+1$. Then
\[
\sum_{i=1}^{k} c_{[i]} \geq \sum_{i=1}^{k} x_{[i]}
\]
since by construction, $c_i \in \left\{0,1\right\}$ for $1 \leq i \leq N$. As $0 \leq x_i \leq 1$ for $1 \leq i \leq N+1$, if $k$ is small enough so that
\[
\sum_{i=1}^{k} c_{[i]} < \sum_{i=1}^{N+1} c_{[i]}=\sum_{i=1}^{N+1} c_{i}\ ,
\]
then all terms in the summation
\[
\sum_{i=1}^{k} c_{[i]}
\]
must be $1$, and so are greater than or equal to the corresponding $x$'s. If $k$ is large enough so that
\[
\sum_{i=1}^{k} c_{[i]}=\sum_{i=1}^{N+1} c_{i}
\]
(which means the remaining $N+1-k$ elements are all $0$), then we also have
\[
\sum_{i=1}^{k} c_{[i]} \geq \sum_{i=1}^{k} x_{[i]}
\]
since we have already seen that the sum over all the $x$'s equals the sum over all the $c$'s.
As all the $x$'s are between $0$ and $1$ by assumption, this proves the claim.
\end{proof}

%
%
%
%

\begin{lem}{\label{submaj}}
For any finite population $\bold{x}\in \mathbb{R}^N$, such that $0 \leq x_i \leq 1$ for all $1 \leq i \leq N$, there exists
a population $\bold{z} \in \mathbb{R}^N$, consisting only of $0$'s and $1$'s, which sub-majorizes the original population.
\end{lem}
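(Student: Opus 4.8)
The plan is to obtain $\bold{z}$ directly from the majorizing population supplied by Lemma~\ref{majlem1}, by rounding its single fractional coordinate up to $1$. Concretely, Lemma~\ref{majlem1} produces a vector $\bold{c} \in \mathbb{R}^N$ with $D$ entries equal to $1$, $N-D-1$ entries equal to $0$, and one entry $\alpha \in [0,1)$, such that $\bold{x} \prec \bold{c}$. I would then define $\bold{z} \in \mathbb{R}^N$ to be $\bold{c}$ with that single $\alpha$ replaced by $1$, so $\bold{z}$ has $D+1$ entries equal to $1$ and $N-D-1$ equal to $0$; the degenerate cases $\alpha = 0$ (then $\bold{c}$ is already $0/1$) and $\bar{\bold{x}}_N = 1$ (then $\bold{x}$ is already $0/1$) are handled by taking $\bold{z} = \bold{c}$ or $\bold{z} = \bold{x}$ respectively. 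By construction $\bold{z}$ consists only of $0$'s and $1$'s, so the only thing left is to check the sub-majorization $\bold{x} \prec_w \bold{z}$.

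The first step is the comparison $\bold{c} \prec_w \bold{z}$. Sorting in decreasing order, $\bold{c}$ becomes $(1,\dots,1,\alpha,0,\dots,0)$ with $D$ leading $1$'s, and $\bold{z}$ becomes $(1,\dots,1,0,\dots,0)$ with $D+1$ leading $1$'s; comparing coordinatewise, $z_{[i]} \ge c_{[i]}$ for every $i$ (the two sorted vectors coincide except at position $D+1$, where $c_{[D+1]} = \alpha \le 1 = z_{[D+1]}$). Summing the first $k$ coordinates gives $\sum_{i=1}^k z_{[i]} \ge \sum_{i=1}^k c_{[i]}$ for all $k = 1, \dots, N$, i.e. $\bold{c} \prec_w \bold{z}$.

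The second step is to read off weak sub-majorization from the full majorization furnished by Lemma~\ref{majlem1}: $\bold{x} \prec \bold{c}$ means $\sum_{i=1}^k x_{[i]} \le \sum_{i=1}^k c_{[i]}$ for $k = 1, \dots, N-1$ together with equality at $k = N$, which in particular yields $\sum_{i=1}^k x_{[i]} \le \sum_{i=1}^k c_{[i]}$ for every $k = 1, \dots, N$, that is $\bold{x} \prec_w \bold{c}$. Chaining with the first step, $\sum_{i=1}^k x_{[i]} \le \sum_{i=1}^k c_{[i]} \le \sum_{i=1}^k z_{[i]}$ for all $k$, which is exactly $\bold{x} \prec_w \bold{z}$.

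I do not expect a real obstacle in this argument; it is essentially transitivity of $\prec_w$ plus the elementary fact that raising one entry of a sorted nonnegative vector cannot decrease any of its partial sums. The one point worth stressing is why the conclusion is only weak sub-majorization and not full majorization: rounding $\alpha$ up strictly increases the total, $\sum_i z_i = \sum_i x_i + (1-\alpha) > \sum_i x_i$ when $\alpha < 1$, and in general no $0/1$ vector has the same sum as an arbitrary $\bold{x} \in [0,1]^N$, so the equality clause in the definition of $\prec$ must be dropped.
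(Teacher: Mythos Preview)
Your proposal is correct and follows essentially the same approach as the paper: apply Lemma~\ref{majlem1} to obtain the majorizing population with at most one fractional entry, then round that entry up to $1$ and observe that sub-majorization is preserved. You supply more detail than the paper does (the explicit coordinatewise comparison of the sorted vectors and the transitivity step), and your closing remark on why only $\prec_w$ rather than $\prec$ is achievable is a nice addition, but the underlying argument is the same.
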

\begin{proof}[{\bf Proof}]
Consider a finite population $\bold{x} \in \mathbb{R}^N$ which obeys the hypotheses.
Using Lemma \ref{majlem1}, we may construct a population $\bold{y}\in \mathbb{R}^N$ which majorizes $\bold{x}$. 
By Lemma \ref{majlem1}, we know that $\bold{y}$ consists only of $0$'s, $1$'s, and at most a 
single exceptional element $y_N$ between $0$ and $1$. 

If the exceptional element is either exactly $0$ or exactly $1$, we are done. 
So, suppose $0 < y_N < 1$. Create a new population $\bold{z} \in \mathbb{R}^N$
such that $z_i = y_i$ for $1 \leq i \leq N-1$, and $z_N = 1$. 
This population $\bold{z}$ then sub-majorizes $\bold{y}$ and hence sub-majorizes $\bold{x}$,
completing the proof.
\end{proof}

\begin{lem}
Suppose $\bold{x} \in \mathbb{R}^N$ is a population consisting only of $0$'s, $1$'s, and a single exceptional element, $x_1$, such that $0 < x_1 < 1$.
Suppose $\bold{y} \in \mathbb{R}^N$ is a population whose elements are the same as those in $\bold{x}$, except $y_1 = 1$ and so $y_1 > x_1$. 
Let $X_1,\dots,X_n$ denote a sample without replacement from $\bold{x}$, and $Y_1, \dots, Y_n$ denote a sample without replacement from $\bold{y}$, $1 \leq n \leq N$.
Finally, suppose $\phi$ is a continuous convex increasing function on $\mathbb{R}$. Then
\begin{align}
E\phi\left(\sum_{i=1}^nX_i\right) \leq E\phi\left(\sum_{i=1}^nY_i\right)\ . \label{submajin}
\end{align}
\end{lem}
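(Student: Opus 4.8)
The plan is to prove \eqref{submajin} by coupling the two sampling experiments on a common probability space, after which the conclusion is immediate. Recall that a sample of size $n$ drawn without replacement from a population of $N$ labelled elements is, up to the (irrelevant) order of the draws, a uniformly distributed $n$-element subset of the index set $\{1,\dots,N\}$; in particular $\sum_{i=1}^n X_i$ depends on the sample only through this unordered subset, since $\{X_1,\dots,X_n\}=\{x_j:j\in A\}$ as multisets. So I would let $A$ be a single uniformly distributed $n$-subset of $\{1,\dots,N\}$ and use the \emph{same} $A$ for both populations, which is legitimate precisely because $\mathbf{x}$ and $\mathbf{y}$ are indexed by the same set. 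Then $\sum_{i=1}^n X_i \stackrel{d}{=} \sum_{j\in A} x_j$ and $\sum_{i=1}^n Y_i \stackrel{d}{=} \sum_{j\in A} y_j$, and the two right-hand sides now live on a common space.

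The key observation is that $\mathbf{x}$ and $\mathbf{y}$ agree in every coordinate except the first, where $y_1=1>x_1$. Writing $\delta := 1-x_1>0$, this gives, pointwise in $A$,
\[
\sum_{j\in A} y_j \;=\; \sum_{j\in A} x_j \;+\; \delta\,\mathbf{1}\{1\in A\} \;\ge\; \sum_{j\in A} x_j .
\]
Since $\phi$ is increasing, $\phi\bigl(\sum_{j\in A} y_j\bigr)\ge\phi\bigl(\sum_{j\in A} x_j\bigr)$ pointwise, and taking expectations over $A$ yields $E\phi\bigl(\sum_{i=1}^n Y_i\bigr)\ge E\phi\bigl(\sum_{i=1}^n X_i\bigr)$, which is \eqref{submajin}; finiteness of both expectations is automatic because the sums are bounded. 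An equivalent route, if one prefers to avoid an explicit coupling, is to condition on the event $\{1\in A\}$: on $\{1\notin A\}$ the two sums coincide, while on $\{1\in A\}$ the $\mathbf{y}$-sum exceeds the $\mathbf{x}$-sum by $\delta$, so the conditional laws are stochastically ordered and hence so are the conditional expectations of $\phi$.

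I do not expect a genuine obstacle here; the only point that needs any care is the standard reduction of sampling without replacement to a uniform random subset, together with the remark that the order of the draws is irrelevant to the sum, so that the two experiments can be driven by one common random subset. It is worth flagging that convexity of $\phi$ is not actually used in this lemma --- monotonicity alone suffices --- even though convexity is what will be needed in the companion step that carries Kemperman's majorization (Lemma~\ref{majlem1}) through to a convex ordering of the sampling sums.
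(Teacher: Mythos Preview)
Your proof is correct and is essentially the same argument as the paper's: both exploit the fact that a sample without replacement is a uniform $n$-subset of indices, then compare the two sums subset by subset, observing that they agree whenever index $1$ is absent and differ by $1-x_1>0$ whenever index $1$ is present, so monotonicity of $\phi$ finishes. Your coupling language makes explicit what the paper leaves implicit in its combinatorial sum, and your remark that convexity is unused here (only monotonicity) is accurate---the paper invokes ``convex increasing'' but only the latter is doing any work in this step.
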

\begin{proof}[{\bf Proof}]
We adapt Kemperman's (1973) argument for Rademacher populations to the current setting of hypergeometric sub-majorization. 

Observe that
\[
E\phi\left(\sum_{i=1}^nX_i\right) = \frac{1}{{N\choose n}} \sum \phi\left(x_{i_1}+\dots+x_{i_n}\right)
\]
where the sum is over all sets of indices $1 \leq i_1 < i_2 < \dots < i_n \leq N$. Note the same holds for sampling without replacement from $\bold{y}$, with
suitable substitution. 
Therefore
\begin{align}
{{N\choose n}}\left[E\phi\left(\sum_{i=1}^nY_i\right) - E\phi\left(\sum_{i=1}^nX_i\right)\right] 
&=
\sum 
\begin{pmatrix}
\phi\left(y_1 + y_{i_2} + \dots +x_{i_n}\right) 
-
\phi\left(x_1 + x_{i_2} + \dots +x_{i_n}\right)\\ 
\end{pmatrix}
\nonumber 
\end{align}
where the sum is over all distinct indices $2 \leq i_2 < i_3 < \dots < i_n \leq N$. Note that sets of indices with $i_1>1$ cancel out by definition
of the two populations. Since $\phi$ is assumed convex increasing, each term of the sum is non-negative. Hence, the entire sum is non-negative as well. This 
gives the claim.
\end{proof}
We next specialize a proposition stated in Marshall, Olkin, and Arnold \citep[p.~455]{marshall2010inequalities} to the current problem. 
Proof of the general statement given in the text is credited to Karlin;
proof for the specific cases of sampling with and without replacement to Kemperman. 
As proof is given in Marshall, Olkin, and Arnold, we simply state the result here.

\begin{lem}\label{moalem}
Let $\bold{x}\in \mathbb{R}^N$ be an arbitrary finite population.
Let $\bold{y} \in \mathbb{R}^N$ be a finite population which majorizes $\bold{x}$. 
Let $X_1,\dots,X_n$ denote a sample without replacement from $\bold{x}$, and $Y_1, \dots, Y_n$ denote a sample without replacement from $\bold{y}$, $1 \leq n \leq N$.
Finally, suppose $\phi$ is a continuous convex increasing function on $\mathbb{R}$. Then
\[
E\phi\left(\sum_{i=1}^nX_i\right) \leq E\phi\left(\sum_{i=1}^nY_i\right)\ .
\]
\end{lem}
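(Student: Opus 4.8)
The plan is to express both sides of the inequality as the values of one and the same symmetric convex functional, evaluated at $\mathbf{x}$ and at $\mathbf{y}$, and then to invoke the fact that symmetric convex functionals respect the majorization order. For a population $\mathbf{z}\in\mathbb{R}^N$ set $\Psi(\mathbf{z}) := E\phi\bigl(\sum_{i=1}^n Z_i\bigr)$, where $Z_1,\dots,Z_n$ is a sample without replacement from $\mathbf{z}$. Exactly as in the proof of the preceding lemma, $\Psi(\mathbf{z}) = \binom{N}{n}^{-1}\sum_{T}\phi\bigl(\sum_{k\in T} z_k\bigr)$, the sum running over all $n$-subsets $T\subseteq\{1,\dots,N\}$. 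Two features are then immediate: $\Psi$ is invariant under permutations of the coordinates of $\mathbf{z}$ (permuting the population only relabels the subsets $T$); and $\Psi$ is convex on $\mathbb{R}^N$, being a nonnegative linear combination of the convex maps $\mathbf{z}\mapsto\phi\bigl(\sum_{k\in T} z_k\bigr)$, each of which is the convex function $\phi$ composed with a linear functional.

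With these two facts, the argument is short. By Rado's theorem (see \citep{marshall2010inequalities}), $\mathbf{x}\prec\mathbf{y}$ is equivalent to $\mathbf{x}$ lying in the convex hull of the vectors obtained by permuting the coordinates of $\mathbf{y}$, i.e.\ $\mathbf{x} = \sum_\sigma \lambda_\sigma\, \sigma\mathbf{y}$ with $\lambda_\sigma\ge 0$, $\sum_\sigma\lambda_\sigma = 1$. Then convexity followed by permutation-invariance gives $\Psi(\mathbf{x}) \le \sum_\sigma \lambda_\sigma \Psi(\sigma\mathbf{y}) = \sum_\sigma \lambda_\sigma \Psi(\mathbf{y}) = \Psi(\mathbf{y})$, which is the claimed inequality. (Equivalently, one may simply cite that a symmetric convex function is Schur-convex.)

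A self-contained alternative avoids Rado's theorem by reducing to a single ``Robin Hood'' transfer. Majorization $\mathbf{x}\prec\mathbf{y}$ is equivalent to $\mathbf{x}$ being obtainable from $\mathbf{y}$ by finitely many transformations, each replacing a pair of coordinates $y_i>y_j$ by $y_i-\varepsilon,\ y_j+\varepsilon$ with $0<\varepsilon\le y_i-y_j$ and leaving the other coordinates fixed; by telescoping it suffices to treat one such step. Couple the two samples by drawing the same random index set $S$ for both populations and condition on $S\cap\{i,j\}$: when $\{i,j\}\subseteq S$ or $\{i,j\}\cap S=\emptyset$ the sample sum is the same under $\mathbf{x}$ and $\mathbf{y}$, since the transfer preserves $y_i+y_j$, so there is nothing to prove. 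On the event that exactly one of $i,j$ lies in $S$, condition further on the remaining $n-1$ selected indices forming a fixed set $T$ with $i,j\notin T$, and put $s:=\sum_{k\in T}y_k=\sum_{k\in T}x_k$; by exchangeability of sampling without replacement the outcomes ``$i\in S$'' and ``$j\in S$'' are then equally likely, so the conditional expectation of $\phi$ of the sample sum is $\tfrac12[\phi(s+y_i)+\phi(s+y_j)]$ under $\mathbf{y}$ and $\tfrac12[\phi(s+y_i-\varepsilon)+\phi(s+y_j+\varepsilon)]$ under $\mathbf{x}$. Since the unordered pair $\{y_i-\varepsilon,\ y_j+\varepsilon\}$ has the same sum as, and is majorized by, $\{y_i,\ y_j\}$, convexity of $\phi$ yields $\phi(s+y_i-\varepsilon)+\phi(s+y_j+\varepsilon)\le\phi(s+y_i)+\phi(s+y_j)$ (valid whether or not the two perturbed values cross). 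Averaging over $T$ and over the cases gives $\Psi(\mathbf{x})\le\Psi(\mathbf{y})$ for one transfer, and iterating finishes the proof.

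Either way, only convexity of $\phi$ is actually used; continuity is automatic for a convex function on $\mathbb{R}$, and monotonicity is not needed here (it is what is needed for the weak-majorization variant relevant to Lemma~\ref{submaj}). I do not see a genuine obstacle: the only points demanding care are, in the first route, lining up ``symmetric $+$ convex $\Rightarrow$ Schur-convex'' with the paper's convention of defining $\prec$ through the decreasing rearrangement, and, in the second route, the exchangeability symmetry that makes ``$i\in S$'' and ``$j\in S$'' equiprobable given the rest of the sample. The remaining work is bookkeeping rather than difficulty, which is presumably why the authors are content to cite the result.
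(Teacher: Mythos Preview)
Your proposal is correct. The paper itself does not supply a proof of this lemma: it simply specializes a proposition from Marshall, Olkin, and Arnold \citep[p.~455]{marshall2010inequalities}, crediting the general argument to Karlin and the sampling-without-replacement case to Kemperman, and states the result without further argument. Your first route---writing $\Psi(\mathbf{z})=\binom{N}{n}^{-1}\sum_T\phi\bigl(\sum_{k\in T}z_k\bigr)$, observing that this is symmetric and convex, and invoking Rado's theorem (equivalently, symmetric convex $\Rightarrow$ Schur-convex)---is precisely the kind of argument one finds in the cited reference, so in that sense you have reconstructed what the paper defers to.

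Your second, self-contained route via Robin Hood transfers is a genuinely more elementary alternative: it avoids the doubly-stochastic/Rado machinery entirely and reduces everything to the one-dimensional convexity inequality $\phi(s+y_i-\varepsilon)+\phi(s+y_j+\varepsilon)\le\phi(s+y_i)+\phi(s+y_j)$, coupled with the exchangeability symmetry of sampling without replacement. This buys transparency and makes the role of convexity completely explicit, at the cost of a bit more bookkeeping in the conditioning. Your remark that monotonicity of $\phi$ is unnecessary here (and is only needed for the weak-majorization step in Lemma~\ref{submaj}) is also correct and worth keeping.
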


Note that Lemma \ref{moalem} requires majorization between populations. 
We may combine the preceding lemmas to demonstrate the following claim.

\begin{thm}\label{submajthm}
For any finite population $\bold{x}\in \mathbb{R}^N$, such that $0 \leq x_i \leq 1$ for all $1 \leq i \leq N$, there exists
a population $\bold{y} \in \mathbb{R}^N$, consisting only of $0$'s and $1$'s which sub-majorizes the original population.
Let $X_1,\dots,X_n$ denote a sample without replacement from $\bold{x}$, and $Y_1, \dots, Y_n$ denote a sample without replacement from $\bold{y}$, $1 \leq n \leq N$.
Finally, suppose $\phi$ is a continuous convex increasing function on $\mathbb{R}$. Then
\[
E\phi\left(\sum_{i=1}^nX_i\right) \leq E\phi\left(\sum_{i=1}^nY_i\right)\ .
\]
\end{thm}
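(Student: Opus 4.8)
The plan is to interpolate between $\mathbf{x}$ and a $0$--$1$ population in two steps, passing through Kemperman's majorizing population, and to chain the corresponding convex-ordering inequalities already established. First I would apply Lemma~\ref{majlem1} to the given population $\mathbf{x}$ to obtain a population $\mathbf{c}\in\mathbb{R}^N$ that majorizes $\mathbf{x}$ and consists of $D$ ones, $N-D-1$ zeros, and a single element $\alpha\in[0,1)$. Since $\mathbf{c}$ majorizes $\mathbf{x}$, Lemma~\ref{moalem} gives, for every continuous convex increasing $\phi$,
\[
E\phi\Big(\sum_{i=1}^n X_i\Big)\ \le\ E\phi\Big(\sum_{i=1}^n C_i\Big),
\]
where $C_1,\dots,C_n$ denotes a sample without replacement from $\mathbf{c}$.

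Next I would dispose of the exceptional element. If $\alpha=0$, then $\mathbf{c}$ already consists only of zeros and ones, so one takes $\mathbf{y}=\mathbf{c}$ and the displayed inequality is the desired conclusion. If instead $0<\alpha<1$, form $\mathbf{y}$ from $\mathbf{c}$ by replacing the single element $\alpha$ by $1$; after relabelling coordinates so that this element occupies the first coordinate, $\mathbf{c}$ and $\mathbf{y}$ are exactly in the situation of the Lemma that raises a single exceptional element to one (stated just before Lemma~\ref{moalem}), which yields
\[
E\phi\Big(\sum_{i=1}^n C_i\Big)\ \le\ E\phi\Big(\sum_{i=1}^n Y_i\Big).
\]
Composing the two displays gives $E\phi(\sum_{i=1}^n X_i)\le E\phi(\sum_{i=1}^n Y_i)$. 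For the structural claim, note that $\mathbf{y}$ is precisely the $0$--$1$ population produced in Lemma~\ref{submaj}, so it sub-majorizes $\mathbf{x}$; concretely, $\mathbf{c}$ majorizes (hence weakly sub-majorizes) $\mathbf{x}$, increasing one coordinate of $\mathbf{c}$ from $\alpha$ to $1$ only increases each partial sum $\sum_{i=1}^k(\cdot)_{[i]}$ of the decreasingly ordered vector, so $\mathbf{y}$ weakly sub-majorizes $\mathbf{c}$, and weak sub-majorization is transitive.

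I do not expect a genuine obstacle here: every ingredient has been proved above, and the proof is simply the two-step interpolation $\mathbf{x}\rightsquigarrow\mathbf{c}\rightsquigarrow\mathbf{y}$. The only point needing a little care is the hypothesis bookkeeping for the single-exceptional-element Lemma — it is stated with the exceptional element in coordinate $1$ and with all other entries in $\{0,1\}$ — but both conditions hold for $\mathbf{c}$ after an innocuous permutation of coordinates, and such a permutation changes neither the law of $\sum_{i=1}^n C_i$ nor any of the majorization relations. (One should also remark that $n\le N$ is the only size restriction used, so no hypothesis beyond $0\le x_i\le 1$ is needed.)
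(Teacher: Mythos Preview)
Your proposal is correct and follows essentially the same route as the paper: construct Kemperman's majorizing population (your $\mathbf{c}$, the paper's $\mathbf{z}$), apply Lemma~\ref{moalem} for the first inequality, then round up the exceptional element and invoke the single-exceptional-element lemma (inequality \eqref{submajin}) for the second. Your extra remarks on the $\alpha=0$ case and on the harmless coordinate permutation are sound and slightly more explicit than the paper's version.
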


\begin{proof}[{\bf Proof}]
Suppose $\bold{x} \in \mathbb{R}^N$ is a finite population which satisfies the hypotheses.  
We may use Lemma \ref{majlem1} to construct a population $\bold{z} \in \mathbb{R}^N$ such that
$\bold{z}$ majorizes $\bold{x}$, and $\bold{z}$ consists only of $0$'s, $1$'s, and at most a single exceptional element between $0$ and $1$.
For $1 \leq n \leq N$, let $Z_1,\dots,Z_n$ denote a sample without replacement from $\bold{z}$. By Lemma \ref{moalem}, we then have the order
\begin{align}
E\phi\left(\sum_{i=1}^nX_i\right) \leq E\phi\left(\sum_{i=1}^nZ_i\right)\ . \label{first_conv_order}
\end{align}
Next, by Lemma \ref{submaj} we may construct a population $\bold{y} \in \mathbb{R}^N$ consisting only of $0$'s and $1$'s that sub-majorizes $\bold{z}$.
Then by \eqref{submajin} we have
\begin{align}
E\phi\left(\sum_{i=1}^nZ_i\right) \leq E\phi\left(\sum_{i=1}^nY_i\right)\ . \label{second_conv_order}
\end{align}
Combining \eqref{first_conv_order} and \eqref{second_conv_order} proves the claim.
\end{proof}

Inequality \eqref{hyper:bennett} provides an opportunity to apply Theorem \ref{submajthm}. 
Recalling the notation of the introduction, 
let $\bold{c} := \left\{c_1,\dots,c_N\right\}$ be a population such that $0 \leq c_i \leq 1$ for $1 \leq i \leq N$, $a=0$, $b=1$, and $\overline{c}_N + 1/N \leq 1/2$.
Using Kemperman's algorithm, as stated in Lemma \ref{majlem1}, there exists a population
$\bold{m} := \left\{m_1,\dots,m_N\right\}$ which majorizes $\bold{c}$ such that $m_i \in \left\{0,1\right\}$ for $1 \leq i \leq N-1$, and $0 \leq m_N \leq 1$. In the following,
suppose $0 < m_N < 1$, since if $m_N=0$ or $m_N=1$ we can apply \eqref{hyper:bennett} directly.

Since $\bold{c} \prec \bold{m}$, we have $\overline{m}_N = \overline{c}_N$. Using Lemma 2, there is a population $\left\{h_1,\dots,h_N\right\}$ with
$h_i \in \left\{0,1\right\}$ for $1 \leq i \leq N$ that sub-majorizes $\bold{c}$. By the preceding construction, we have $h_i = m_i$ for
$1 \leq i \leq N-1$, and $h_N \equiv 1 > m_N$. 

Without loss of generality, re-label $\bold{m}$ and $\bold{h}$ so that: for $1 \leq i \leq D-1$ we have $h_i=m_i=1$; for $i=D$ 
we have $h_D=1 > m_D > 0$;
for $D+1 \leq i \leq N$ we have $h_i=m_i=0$. Denote the exceptional element of $\bold{m}$ by $\alpha := m_D$. 
With the populations so modified, we derive bounds for the difference between the populations means:
\begin{align}
\frac{1}{N} \geq\overline{h}_N - \overline{m}_N = \frac{h_D-m_D}{N} = \frac{1-\alpha}{N} \geq 0\ . \label{extmeandiff}
\end{align}
By construction, we thus have
\[
\overline{m}_N \leq \overline{h}_N \leq \overline{m}_N+\frac{1}{N} \leq \frac{1}{2}\ .
\]
Suppose then that we sample $n < D$ items without replacement from $\bold{c}$. Let $X_i$ denote the sample without
replacement from $\bold{c}$, and let $H_i$ denote a corresponding sample without replacement from $\bold{h}$. Then for $t > 0$ 
\begin{align}
P\left( \sum_{i=1}^nX_i - n \mu_c \ge t \right) 
 \le\ & \inf_{r>0}\frac{ E \exp \left( r \sum_{i=1}^nX_i \right)}{\exp \left(r t + r n \mu_c\right)} \nonumber \\
 \le\ & \inf_{r>0}\frac{ E \exp \left( r \sum_{i=1}^nH_i \right)}{\exp \left(r t + r n \mu_c\right)} \label{thm4use} \\
 =\ & \inf_{r>0}\exp\left(r n (\mu_h-\mu_c)\right)\frac{ E \exp \left( r   \sum_{i=1}^n(H_i-\mu_h)\right)}{\exp \left(r t\right)}  \nonumber \\
 \le\ & \inf_{r>0}\exp\left(r \frac{n}{N} \right)\frac{ E \exp \left( r \sum_{i=1}^n(H_i-\mu_h)\right)}{\exp \left(r t\right)}  \label{meanpush} \\
 = \ & \inf_{r>0}\exp\left(r \frac{n}{N} \right)\frac{ E \exp \left( r \sum_{i=1}^n(Y_i-\pi_i)\right)}{\exp \left(r t\right)}  \nonumber  \\
\le \ & \inf_{r>0}\exp\left(r \frac{n}{N}-rt+n\left(\frac{\sum_{i=1}^n\pi_i(1-\pi_i)}{n}\right)\left(e^r-1-r\right)\right) \label{swbnd} \\
=\ &\inf_{r>0}\exp\left(r \frac{n}{N}-rt+n \gamma^2\left(e^r-1-r\right)\right) \label{intbnd}
\end{align}
where in the final line we write $\gamma^2 := (1/n)\sum_{i=1}^n\pi_i(1-\pi_i)$. The inequality at \eqref{meanpush} follows by \eqref{extmeandiff}.
The inequality at \eqref{thm4use} follows by Theorem \ref{submajthm}.
The inequality at line \eqref{swbnd} follows by Shorack and Wellner page 852, display (b) \citep{MR838963}.

At this point we may continue from \eqref{intbnd} and  optimize over $r$. Doing so yields an optimal choice of
\[
r^* = \log\left(1+\frac{Nt-n}{nN\gamma^2}\right)\ .
\]
Using this value, however, yields an exponential bound that is somewhat difficult to compare to \eqref{hyper:bennett}. If instead we simply choose
\[
r^*_2 = \log\left(1+\frac{t}{n\gamma^2}\right)\ ,
\]
we obtain a bound similar in performance to the bound we find using $r^*$, but has the benefit of easy comparison to \eqref{hyper:bennett}. 
The choice $r_2^*$ corresponds to the optimal value of $r$ when the original population is majorized by a hypergeometric population. 
We continue from \eqref{intbnd} using $r_2^*$, and obtain
\begin{align}
P\left( \sum_{i=1}^nX_i - n \mu_c \ge t \right)  
\le & \exp\left(\frac{n}{N}\log\left(1+\frac{t}{n\gamma^2}\right)\right) 
\cdot  \exp\left(-t\left[\left(1+\frac{n\gamma^2}{t}\right)\log\left(1+\frac{t}{n\gamma^2}\right)-1\right]\right) \nonumber \\
= & \exp\left(\frac{n}{N}\log\left(1+\frac{t}{n\gamma^2}\right)\right)  \cdot \exp\left(-\frac{t^2}{2n\gamma^2}\psi\left(\frac{t}{n\gamma^2}\right)\right)\ .
\end{align}
Writing $\lambda = t/\sqrt{n}$\ , and substituting $\gamma^2= (D/N)(1-D/N)(1-f_n) \equiv \sigma_N^2(1-f_n)$, we obtain the following bound:
\begin{align}
P(\sqrt{n}(\overline{X}_n-\mu_c) \ge \lambda) \leq
\left(1+\frac{\lambda}{\sqrt{n}\sigma_N(1-f_n)}\right)^{n/N}  \exp\left(-\frac{\lambda^2}{2\sigma_N^2(1-f_n)}\psi\left(\frac{\lambda}{\sqrt{n}\sigma_N^2(1-f_n)}\right)\right)\ .\nonumber
\end{align}
In this form, the cost of sub-majorization is clear when compared to \eqref{hyper:bennett}: we incur the leading term outside the exponent.
By shifting and scaling the population, we may use this bound to 
obtain the following theorem for the general problem of sampling without replacement:
\begin{thm}\label{thm:genben}
Let $\bold{c} := \left\{c_1,\dots,c_N\right\}$ be a population  with $a=\min_{1 \leq i \leq N} c_i$ and $b=\max_{1\leq i \leq N}c_i$ both finite.
Let $\bold{d} := \left\{(c_1-a)/(b-a),\dots,(c_N-a)/(b-a)\right\}$. Suppose first that 
$\bold{d}$ is majorized by a Hypergeometric population such that $D/N \leq 1/2$. From this Hypergeometric population define $\sigma_N^2:=(D/N)(1-D/N)$.
Then the following bound holds for a sample without replacement of $n < D$ items from $\bold{c}$:
\begin{align}
P(\sqrt{n}(\overline{X}_n-\mu_c) \ge \lambda) \leq
\exp\left(-\frac{\lambda^2}{2(b-a)^2\sigma_N^2(1-f_n)}\psi\left(\frac{\lambda}{\sqrt{n}(b-a)\sigma_N^2(1-f_n)}\right)\right)\ . \label{exactmaj}
\end{align}
If instead $\overline{d}_N + 1/N \leq 1/2$, then the following bound holds for a sample without replacement of $n < D$ items from $\bold{c}$:
\begin{align}
\ \ &P(\sqrt{n}(\overline{X}_n-\mu_c) \ge \lambda)  \nonumber \\
\leq\ \ 
&\left(1+\frac{\lambda}{\sqrt{n}(b-a)\sigma_N(1-f_n)}\right)^{n/N}  \cdot \exp\left(-\frac{\lambda^2}{2(b-a)^2\sigma_N^2(1-f_n)}\psi\left(\frac{\lambda}{\sqrt{n}(b-a)\sigma_N^2(1-f_n)}\right)\right)\ .
\label{suboptsubmaj}
\end{align}
\end{thm}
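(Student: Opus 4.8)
The plan is to reduce the general bounded-population problem to the $[0,1]$-valued case that has already been worked out in the paragraphs preceding the statement, by pushing the population through the affine map $x\mapsto(x-a)/(b-a)$, invoking the bounds derived there, and then undoing the map. Concretely, set $d_i:=(c_i-a)/(b-a)$, so every $d_i\in[0,1]$, and let $D_1,\dots,D_n$ be the sample without replacement from $\bold d$ obtained from the same random ordering of $\{1,\dots,N\}$ that produces $X_1,\dots,X_n$, so that $X_i=a+(b-a)D_i$ and hence $\overline X_n-\mu_c=(b-a)(\overline D_n-\mu_d)$ with $\mu_d:=\overline{\bold d}_N$. Then for every $\lambda>0$,
\[
P\bigl(\sqrt n(\overline X_n-\mu_c)\ge\lambda\bigr)=P\bigl(\sqrt n(\overline D_n-\mu_d)\ge\lambda/(b-a)\bigr),
\]
so it suffices to bound the right-hand side for $\bold d$ and substitute $\lambda\mapsto\lambda/(b-a)$ in the result; the hypergeometric parameters $D$, $N$ and therefore $\sigma_N^2=(D/N)(1-D/N)$ and $1-f_n=1-(n-1)/(N-1)$ are all determined by $\bold d$ and are unaffected by the rescaling, so this substitution (together with rescaling the argument of $\psi$ by $1/(b-a)$) is the only change.

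For \eqref{exactmaj} the hypothesis is that $\bold d\prec\bold e$, where $\bold e$ is the population with $D$ ones and $N-D$ zeros, $D/N\le 1/2$; since majorization preserves the population sum, $\mu_d=D/N=:\mu_N$. Let $E_1,\dots,E_n$ be a sample without replacement from $\bold e$, so $\sum_{i=1}^n E_i\sim\mathrm{Hypergeometric}(n,D,N)$. Lemma~\ref{moalem} applied with the convex increasing function $\phi(x)=e^{rx}$, $r>0$, gives $E\exp(r\sum_{i=1}^n D_i)\le E\exp(r\sum_{i=1}^n E_i)$; inserting this into the Cram\'er--Chernoff bound and using $\mu_d=\mu_N$ to cancel the centering term exactly, one obtains for every $t>0$
\[
P\bigl(\textstyle\sum_{i=1}^n D_i-n\mu_d\ge t\bigr)\le\inf_{r>0}e^{-rt}\,E\exp\Bigl(r\textstyle\sum_{i=1}^n(E_i-\mu_N)\Bigr).
\]
The last infimum is exactly the quantity estimated in the proof of Theorem~\ref{thm:bennettanalogue} via the representation \eqref{ehm:representation} and Bennett's inequality, so it is dominated by the right-hand side of \eqref{hyper:bennett} with $\sigma_N^2=\mu_N(1-\mu_N)$; since $n<D\le N-D$ guarantees $1\le n\le D\wedge(N-D)$, that theorem applies. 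Taking $t=\sqrt n\,\lambda'$ and then substituting $\lambda'=\lambda/(b-a)$ produces \eqref{exactmaj}.

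For \eqref{suboptsubmaj} the hypothesis $\overline{\bold d}_N+1/N\le 1/2$ is precisely the condition under which the chain \eqref{thm4use}--\eqref{intbnd} and the ensuing optimization with $r_2^*$, carried out just before the statement, go through verbatim for the $[0,1]$-valued population $\bold d$: Lemma~\ref{majlem1} supplies a majorizing population with a single exceptional element, Lemma~\ref{submaj} a $\{0,1\}$-valued population $\bold h$ with $D$ ones that sub-majorizes $\bold d$, and combining Theorem~\ref{submajthm} with the mean estimate \eqref{extmeandiff} yields
\[
P\bigl(\sqrt n(\overline D_n-\mu_d)\ge\lambda'\bigr)\le\Bigl(1+\frac{\lambda'}{\sqrt n\,\sigma_N(1-f_n)}\Bigr)^{n/N}\exp\Bigl(-\frac{(\lambda')^2}{2\sigma_N^2(1-f_n)}\,\psi\Bigl(\frac{\lambda'}{\sqrt n\,\sigma_N^2(1-f_n)}\Bigr)\Bigr),
\]
with $\sigma_N^2=(D/N)(1-D/N)$ and $n<D$. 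Substituting $\lambda'=\lambda/(b-a)$ gives \eqref{suboptsubmaj}.

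The bulk of the difficulty lies in the pre-statement derivation rather than in this theorem: the delicate point there is the sub-majorization step, where replacing the exceptional element of Kemperman's majorizing population by $1$ shifts the population mean upward by at most $1/N$ (see \eqref{extmeandiff}), so in the Chernoff bound the centering no longer cancels and the factor $\exp(rn/N)$ must be carried through the optimization, leaving the leading term $(1+\lambda/(\sqrt n\,\sigma_N(1-f_n)))^{n/N}$ in \eqref{suboptsubmaj}; in the exactly-majorized case that shift is zero and the leading term disappears, which is why \eqref{exactmaj} has none. For the present theorem the only things that need care are checking that the affine rescaling is a genuine coupling of the two without-replacement schemes with $\overline X_n-\mu_c=(b-a)(\overline D_n-\mu_d)$, and confirming that every constant appearing in the $[0,1]$-case bounds is scale-free, so that $\lambda\mapsto\lambda/(b-a)$ really is the only modification.
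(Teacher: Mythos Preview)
Your proposal is correct and follows essentially the same approach as the paper. The paper itself does not give a formal proof of this theorem: the derivation for $[0,1]$-valued populations (the chain \eqref{thm4use}--\eqref{intbnd} and the optimization with $r_2^*$) is carried out in the paragraphs immediately preceding the statement, and the theorem is then introduced with the single sentence ``By shifting and scaling the population, we may use this bound to obtain the following theorem,'' which is exactly the affine-reduction step you spell out; for the exactly-majorized case the paper likewise just remarks that one ``can apply \eqref{hyper:bennett} directly,'' which you have unpacked via Lemma~\ref{moalem} and the Cram\'er--Chernoff route behind Theorem~\ref{thm:bennettanalogue}.
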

Two-sample rank tests provide an opportunity to explore the behavior of the bounds of Theorem \ref{thm:genben}. 
Following the exposition in Chapter 4 of H{\'{a}}jek, {\v{S}}id{\'{a}}k, and Sen \citep{MR1680991} (with the notation modified), 
let $Y_1,\dots,Y_n$ and $Z_1,\dots,Z_m$ be random 
samples with continuous distributions $F_Y$ and $F_Z$. Form the pooled sample $Y_{n+j}=Z_j$, $j=1,\dots, m$, and $N=n+m$. 
Let $R_i\ (i=1,\dots,N)$ denote the rank of the observation $Y_i$ in the ordered sequence $Y_{(1)} < Y_{(2)} < \dots < Y_{(N)}$.  To test the null hypothesis
$H_0: F_Y=F_Z$ against alternatives of shifts in location, 
one may use the Wilcoxon test (see page 96 \citep{MR1680991}). 
The test statistic, expectation under the null, and variance under the null are
\[
S_W := \sum_{i=1}^n R_i\ , ES_W = \frac{1}{2}n(n+m+1)\ , \text{ and } Var(S_W)=\frac{1}{12}nm(n+m+1)\ .
\]
Under the null, $S_W$ may be viewed as the sum in a sample without replacement from the population $\bold{c}_W := \left\{1,2,\dots,N\right\}$, where $a = 1$ and $b=N$.
Shifting and scaling the population produces $\bold{d}_W := \left\{0,1/(N-1),\dots,(N-2)/(N-1),1\right\}$\ . 
If $N$ is even, then $\bold{d}_W$ is majorized by a Hypergeometric population
containing $N/2\ 1's$ and $N/2\ 0's$, and hence $\sigma^2_N = (D/N)(1-D/N)=1/4$. If we additionally assume $n \leq m$,
we may use \eqref{exactmaj} to study its finite sample behavior. Doing so we find for $\lambda > 0$
\[
P\left(\sqrt{n}\left(\overline{X}_n-\frac{(N+1)}{2}\right) \ge \lambda\right) \leq
\exp\left(-\frac{2\lambda^2}{(n+m-1)^2(1-f_n)}\psi\left(\frac{2\lambda}{\sqrt{n}(n+m-1)(1-f_n)}\right)\right)\ . 
\]
Serfling's bound \eqref{serfbnd} may be applied in this case as well; through its application we find
\[
P\left(\sqrt{n}\left(\overline{X}_n-\frac{(N+1)}{2}\right) \ge \lambda\right) \leq
\exp\left(-\frac{2\lambda^2}{(n+m-1)^2(1-\frac{n-1}{n+m})}\right)\ . 
\]
Finally, we may use Bardenet and Maillard's bound \eqref{expbnd:bm2} with $\delta= \delta_f=1\times 10^{-7}$ to analyze the situation as well.
Figure \ref{fig:figure5} compares the performance of these three bounds when $n=m=250$. In this case, we see that the bounds are comparable, 
with Bardenet and Maillard's bound performing the best, and Serfling's performance superior to \eqref{exactmaj}.
This occurs because the variance component $(D/N)(1-D/N)$ (which is close to $1/4$ when $n=m=250$) 
that appears in the bound is the variance of the majorizing hypergeometric 
population rather than the variance of the shifted and scaled population $\bold{d}_w$ (which is close to $1/12$ when $n=m=250$). 
Bardenet and Maillard's bound performs well because it incorporates information about the variance of the 
untransformed population into its bound.

Another example is found in the Klotz test, which is used to test the null $H_0: F_Y=F_Z$ against alternatives of differences in scale 
(see page 104 \citep{MR1680991}). Recalling $N := n+m$, 
the test statistic, expectation under the null, and variance under the null are
\[
S_K := \sum_{i=1}^n \left[\Phi^{-1}\left(\frac{R_i}{N+1}\right)\right]^2\ ,\ ES_K = \frac{n}{N}\sum_{i=1}^{N} \left[\Phi^{-1}\left(\frac{i}{N+1}\right)\right]^2\ , 
\]
and
\[
VarS_K = \frac{nm}{N(N-1)}\sum_{i=1}^{N} \left[\Phi^{-1}\left(\frac{i}{N+1}\right)\right]^4 - \frac{m}{n(N-1)}(ES_K)^2\ .
\]
Defining the population 
\[
\bold{c}_K:= \left\{c_i := \left[\Phi^{-1}\left(\frac{i}{N+1}\right)\right]^2\ ,\ 1 \leq i \leq N\right\}\ ,
\]
we may view $S_K$ under the null as the sum in a sample without replacement from $\bold{c}_K$. If $n+m=500$, 
we may compute $\bold{c}_K$, and find $a \approx 6.26 \times 10^{-6}$ and
$b \approx 8.29$. Shifting and scaling the population produces $\bold{d}_K$, 
which is bounded by $0$ and $1$. This population is majorized by a population containing $59$ $1's$,
$440$ $0's$, and a single exceptional element approximately equal to $0.044$. 
Hence, it is sub-majorized by a population containing $60$ $1's$ and $440$ $0's$. 
Supposing that $n=60$ and $m=440$, we may use \eqref{suboptsubmaj} to analyze this scenario 
since the mean of the sub-majorizing population is $3/25$ (also note $(D/N)(1-D/N)=66/625$ in this case).  
Doing so (with the conservative approximation that $b-a \approx 8.29$), 
we find for $\lambda \geq 0$ that
\begin{align}
\ \ &P(\sqrt{60}(\overline{X}_{60}-\mu_K) \ge \lambda)  \nonumber \\
\leq\ \ 
&\left(1+\frac{\lambda}{\sqrt{60}(8.29) (66/625) (440/499)}\right)^{3/25}  
\cdot \exp\left(-\frac{\lambda^2}{2(8.29)^2(66/625)(440/499)}\psi\left(\frac{\lambda}{\sqrt{60}(8.29)(66/625)(440/499)}\right)\right)\ . \nonumber
\end{align}
Once again, we may apply Serfling's uniform bound. Doing so here, we find
\begin{align}
P\left(\sqrt{60}(\bar{X}_{60} - \mu_K) \geq \lambda \right) \leq
\exp\left(-\frac{2 \lambda^2}{(441/500)(8.29)^2}\right)\ . \nonumber \\
\end{align}
As in the Wilcoxon example, 
we may use Bardenet and Maillard's bound \eqref{expbnd:bm2} with $\delta= \delta_f=1\times 10^{-7}$ to analyze the situation.
Figure \ref{fig:figure5} also compares the performance of these three bounds for the special case $n=60$ and $m=440$. 
In this case, we again see that Bardenet and Maillard's bound performs the best, 
but that the bound obtained via sub-majorization now improves on Serfling's result. This is because the variance component of the sub-majorizing
hypergeometric population, $(D/N)(1-D/N)=66/625 = 0.1056 < 1/4$ reflects some of the variability of the untransformed population. However,
the untransformed population $d_K$ has variance $\sigma^2 \approx 0.0258$; this variability is captured in the bound of Bardenet and Maillard, and
so we see the improved performance.

Thus we see that (sub-)majorization, as a strategy for finding exponential bounds which incorporate information about the population variance
in the problem of sampling without replacement from a bounded finite population, 
can produce sub-optimal results. As we saw, this is because the (sub-)majorizing hypergeometric population can be
more variable than the underlying population from which we sample. 
However, if our goal is to find uniform exponential bounds, this information loss is immaterial: such
bounds apply to all underlying populations, regardless of their variability. 
Hence the analysis of the hypergeometric distribution 
which produced Theorems \ref{thm:lpanalogue} and \ref{thm:talagrandanalogue}. We turn to the proofs of these bounds in the concluding section.

\begin{figure}
  \begin{subfigure}{\textwidth}
    \centering
    \includegraphics[width=\linewidth,height=7.5cm,keepaspectratio]{./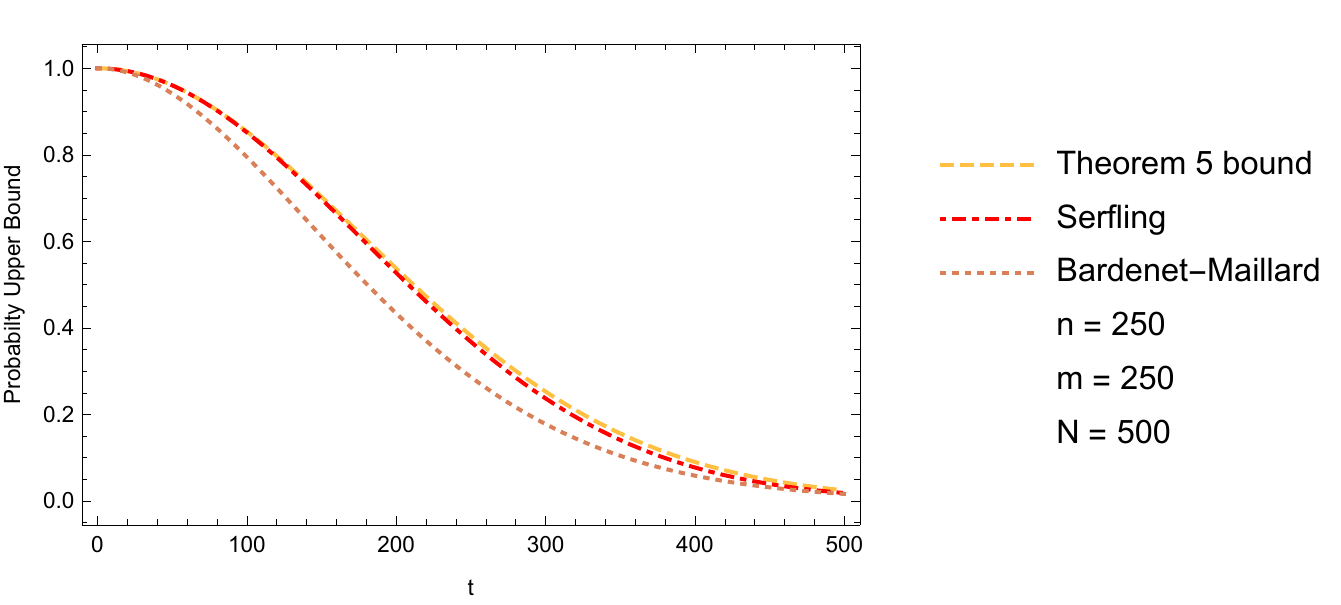}
    \caption{1a}
    \label{fig5:sfig1}
  \end{subfigure}\\
  \begin{subfigure}{\textwidth}
    \centering
    \includegraphics[width=\linewidth,height=7.5cm,keepaspectratio]{./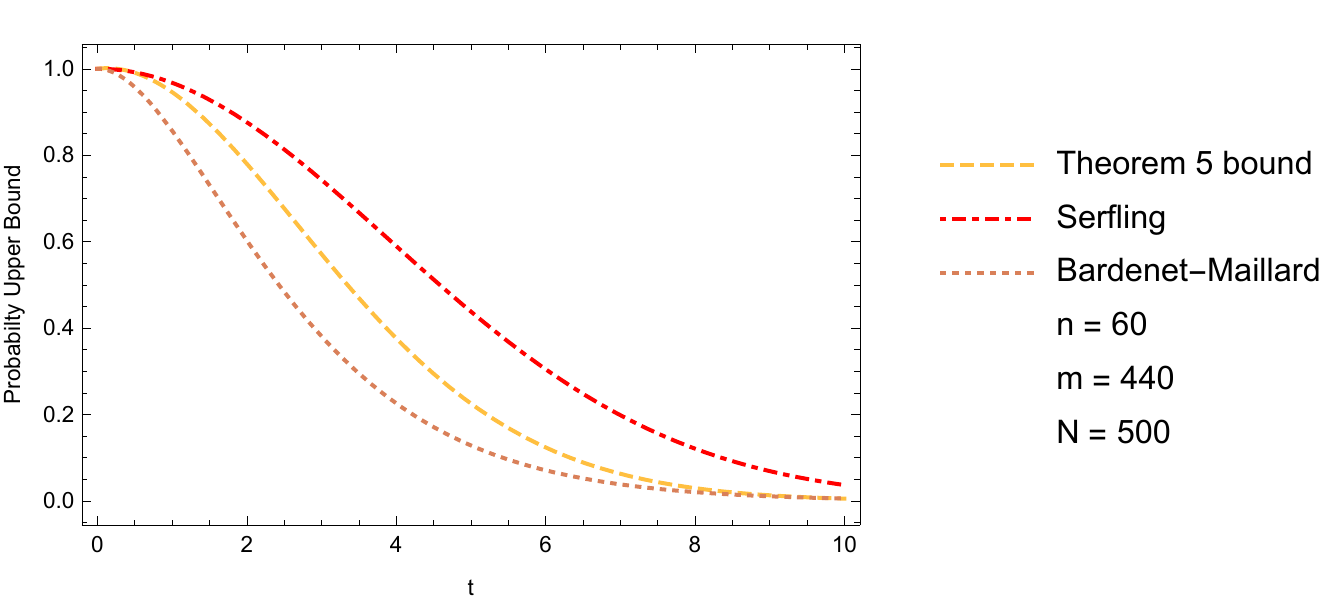}
    \caption{1b}
    \label{fig5:sfig2}
  \end{subfigure}
\caption{Comparison of the bounds of Theorem \ref{thm:genben} to Bardenet and Maillard's bound  \eqref{expbnd:bm2} and Serfling's bound. 
The first sub-figure \eqref{fig5:sfig1}, corresponds to the Wilcoxon example. 
The second sub-figure \eqref{fig5:sfig2}, corresponds to the Klotz example. 
} 
\label{fig:figure5}
\end{figure}

\section{Proofs of the Bounds}  
\label{sec:proofs}
Our proofs depend on a version of Stirling's formula from Robbins \citep{ROBBINS1}.
\begin{lem}
For $n \in \mathbb{N}_0$
\begin{align}
&\sqrt{2\pi n} \left(\frac{n}{e}\right)^n e^{\frac{1}{12n+1}}
\leq n! \leq
\sqrt{2\pi n}\left(\frac{n}{e}\right)^n e^{\frac{1}{12n}}\ . \label{STIRLINGS_FORMULA}
\end{align}
\end{lem}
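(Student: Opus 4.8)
This is the classical two-sided Stirling estimate of Robbins, and I would reproduce his argument. First I would pass to logarithms: fix an integer $n\ge 1$ (the bound for $n=0$ being degenerate) and set
\[
a_n \;:=\; \log n! - \Bigl(n+\tfrac12\Bigr)\log n + n \;=\; \log\!\frac{n!}{\sqrt{n}\,(n/e)^n}.
\]
Once we know that $L:=\lim_{n}a_n$ exists and equals $\tfrac12\log(2\pi)$, we have $a_n-L=\log\bigl(n!/(\sqrt{2\pi n}\,(n/e)^n)\bigr)$, so \eqref{STIRLINGS_FORMULA} is equivalent to the two-sided bound $\tfrac{1}{12n+1}\le a_n-L\le \tfrac{1}{12n}$. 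Thus the whole problem reduces to sandwiching $a_n$ between two monotone sequences that share the limit $L$.

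The next step is to compute the increments. A direct cancellation gives $a_n-a_{n+1}=(n+\tfrac12)\log\frac{n+1}{n}-1$, and substituting $x=\tfrac{1}{2n+1}$ into $\log\frac{1+x}{1-x}=2\sum_{k\ge 0}\tfrac{x^{2k+1}}{2k+1}$ (noting $n+\tfrac12=\tfrac1{2x}$) yields the clean identity
\[
a_n-a_{n+1}\;=\;\sum_{k\ge 1}\frac{1}{(2k+1)\,(2n+1)^{2k}}\;>\;0,
\]
so $(a_n)$ is strictly decreasing. Bounding the series above by the geometric series with ratio $(2n+1)^{-2}$ and constant $\tfrac13$,
\[
a_n-a_{n+1}\;<\;\frac{1}{3\bigl((2n+1)^2-1\bigr)}\;=\;\frac{1}{12n(n+1)}\;=\;\frac{1}{12n}-\frac{1}{12(n+1)},
\]
which shows $b_n:=a_n-\tfrac{1}{12n}$ is strictly increasing. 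For the reverse comparison I would retain only the leading term, $a_n-a_{n+1}>\tfrac{1}{3(2n+1)^2}$, and verify the elementary inequality $\tfrac{1}{3(2n+1)^2}\ge \tfrac{1}{12n+1}-\tfrac{1}{12(n+1)+1}$, which after clearing denominators reduces to $24n\ge 23$ and hence holds for every $n\ge 1$; this shows $c_n:=a_n-\tfrac{1}{12n+1}$ is strictly decreasing.

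Since $b_n<c_n$ for all $n$ while $c_n-b_n=\tfrac{1}{12n}-\tfrac{1}{12n+1}\to 0$, the increasing sequence $b_n$ and the decreasing sequence $c_n$ converge to a common limit, necessarily $L=\lim_n a_n$. Hence $b_n\le L\le c_n$ for every $n$, i.e. $\tfrac{1}{12n+1}\le a_n-L\le \tfrac{1}{12n}$, which is the claim modulo the value of $L$. The remaining, and I expect only genuinely non-routine, step is to pin down $L=\tfrac12\log(2\pi)$: convergence of $a_n$ already gives $n!\sim K\sqrt{n}\,(n/e)^n$ with $K=e^{L}$, and inserting this asymptotic into Wallis' product $\tfrac\pi2=\prod_{k\ge1}\tfrac{(2k)^2}{(2k-1)(2k+1)}$ — equivalently into $\tfrac{2^{2n}(n!)^2}{\sqrt{n}\,(2n)!}\to\sqrt\pi$ — collapses to $K/\sqrt2=\sqrt\pi$, so $K=\sqrt{2\pi}$ and $L=\tfrac12\log(2\pi)$. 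Substituting this into the bracket bound for $a_n-L$ and exponentiating produces \eqref{STIRLINGS_FORMULA}. If one prefers, the Wallis computation can be bypassed entirely by citing the classical value of the Stirling constant, as in \citep{ROBBINS1}.
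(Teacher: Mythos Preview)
Your argument is correct and is precisely Robbins' classical proof. Note, however, that the paper does not actually prove this lemma: it is stated with a citation to \citep{ROBBINS1} and used as a black box, with no proof given. So there is nothing in the paper to compare your approach against beyond the observation that you have faithfully reproduced the cited source.
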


To prove \eqref{HYPERGEOMETRIC:LP_BOUND}, we will need some additional tools. We start with the following lemma.
\begin{lem}
Suppose $S_n \sim \text{Hypergeometric}(n,D,N)$ with $1 \leq n < D \leq \lfloor N/2\rfloor$ and $1 \leq k \leq n-1$. 
Then for $k \geq n(D/N)$ we have
\begin{align}
P(S_n=k) &\leq \frac{1}{\sqrt{2\pi}} \frac{\sqrt{D(N-D) n (N-n)}}{ \sqrt{k(D-k)(n-k) (N-D-(n-k)) N}} \nonumber \\
&\ \ \ \cdot \exp\left(-\frac{2nN}{N-n} u^2\right) \exp\left(-\frac{n}{3}\left(1+\frac{n^3}{(N-n)^3}\right) u^4 \right)\ . 
\label{HYPERGEOMETRIC_DEVIATE_BOUND}
\end{align}
\end{lem}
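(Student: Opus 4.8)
The plan is to turn the exact mass function into a form controlled by Robbins' version of Stirling's formula \eqref{STIRLINGS_FORMULA}, and then to reduce everything to a sharp lower bound on a relative-entropy quantity, which I would obtain by peeling off a genuinely ``binomial'' piece.

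\emph{Stirling and reduction.} Writing $P(S_n=k)=D!\,(N-D)!\,n!\,(N-n)!/\bigl(k!\,(D-k)!\,(n-k)!\,(N-D-n+k)!\,N!\bigr)$ and applying \eqref{STIRLINGS_FORMULA} with the upper estimate for each numerator factorial and the lower estimate for each denominator factorial, the five factors $\sqrt{2\pi\,\cdot\,}$ combine to give exactly the prefactor in \eqref{HYPERGEOMETRIC_DEVIATE_BOUND}, the powers of $e$ cancel, and one is left with $R\,e^{-h(k)}$, where $R$ is the product of the Robbins remainders $e^{1/(12m)}$ (numerator) and $e^{-1/(12m+1)}$ (denominator), and, with $a=k$, $b=D-k$, $c=n-k$, $d=N-D-n+k$ (all $\ge1$ under the hypotheses, with $a+b=D$, $c+d=N-D$, $a+c=n$, $b+d=N-n$),
\[
h(k):=a\log a+b\log b+c\log c+d\log d+N\log N-D\log D-(N-D)\log(N-D)-n\log n-(N-n)\log(N-n).
\]
This is $N$ times the Kullback--Leibler divergence of the $2\times2$ table $(a,b,c,d)$ from the product of its margins, so $h\ge0$, $h$ is convex in $k$, and $h$ vanishes to second order at $k_0:=nD/N$. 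Put $u:=k/n-D/N\ge0$. It then suffices to prove (a) $R\le1$ and (b) $h(k)\ge T(k):=\tfrac{2nN}{N-n}u^2+\tfrac n3\bigl(1+\tfrac{n^3}{(N-n)^3}\bigr)u^4$, since then $P(S_n=k)\le(\text{prefactor})\,R\,e^{-h(k)}\le(\text{prefactor})\,e^{-T(k)}$. For (a): $a\ge1$ gives $\tfrac1{12D}<\tfrac1{12b+1}$ and $\tfrac1{12n}<\tfrac1{12c+1}$; $b=D-k\ge2$ gives $\tfrac1{12(N-n)}<\tfrac1{12d+1}$; and a short estimate using $N-D\ge N/2$ and $12k+1<6N$ gives $\tfrac1{12(N-D)}\le\tfrac1{12a+1}+\tfrac1{12N+1}$; adding these four inequalities shows the exponent of $R$ is $\le0$.

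\emph{The entropy bound (b).} One computes $h'(k)=\log\bigl(ad/(bc)\bigr)$ and $h''(k)=\tfrac1a+\tfrac1b+\tfrac1c+\tfrac1d$, and $h(k_0)=h'(k_0)=0$. The key step is the decomposition of $h$ into a binomial rate function and a finite-sampling surplus: since $\tfrac{d}{dk}\bigl[n\,D_{\mathrm{KL}}(k/n\|D/N)\bigr]=\log\tfrac{a(N-D)}{Dc}$, the function $E(k):=h(k)-n\,D_{\mathrm{KL}}(k/n\|D/N)$ satisfies $E(k_0)=0$, $E'(k)=\log\tfrac{Dd}{(N-D)b}$ (which is $\ge0$ for $k\ge k_0$, i.e.\ $Nk\ge Dn$), and $E''(k)=\tfrac1{D-k}+\tfrac1{N-D-n+k}$. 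For the binomial part I would invoke the elementary inequality $D_{\mathrm{KL}}(q\|p)\ge2(q-p)^2+\tfrac13(q-p)^4$ for $0<p\le q\le1$, giving $n\,D_{\mathrm{KL}}(k/n\|D/N)\ge2nu^2+\tfrac n3u^4$. For the surplus, write $b-d=-\bigl((N-n)(1-2D/N)+2(s-k_0)\bigr)$; with $D\le N/2$ and $s\ge k_0$ this yields $|b-d|\ge2(s-k_0)$, while $bd\le\bigl((N-n)/2\bigr)^2$, so $E''(s)=\tfrac4{N-n}+\tfrac{(b-d)^2}{(N-n)\,bd}\ge\tfrac4{N-n}+\tfrac{16(s-k_0)^2}{(N-n)^3}$, and then, using $E(k_0)=E'(k_0)=0$ and $\int_{k_0}^{k}(k-s)(s-k_0)^2\,ds=(nu)^4/12$,
\[
E(k)=\int_{k_0}^{k}(k-s)\,E''(s)\,ds\ \ge\ \frac{2n^2}{N-n}\,u^2+\frac{4n^4}{3(N-n)^3}\,u^4 .
\]
Adding the two contributions gives $h(k)\ge\bigl(2n+\tfrac{2n^2}{N-n}\bigr)u^2+\bigl(\tfrac n3+\tfrac{4n^4}{3(N-n)^3}\bigr)u^4$, and since $2n+\tfrac{2n^2}{N-n}=\tfrac{2nN}{N-n}$ and $\tfrac n3+\tfrac{4n^4}{3(N-n)^3}\ge\tfrac n3\bigl(1+\tfrac{n^3}{(N-n)^3}\bigr)$, this is (b).

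\emph{Expected main obstacle.} The delicate ingredient is the Bernoulli inequality $D_{\mathrm{KL}}(q\|p)\ge2(q-p)^2+\tfrac13(q-p)^4$: a naive comparison of second derivatives fails (it is false near $q=1/2$ when $p<1/2$), so it must be proved by an integrated/convexity argument. The reason for routing the proof through the decomposition $h=n\,D_{\mathrm{KL}}(k/n\|D/N)+E$ rather than trying to bound $h''$ against $T''$ directly is that the pointwise inequality $h''\ge T''$ is itself false --- it fails marginally for $k$ near $n/2$ when $D<N/2$, precisely because $(2k-n)^2=(a-c)^2$ vanishes there --- whereas $D_{\mathrm{KL}}(k/n\|D/N)$ has its minimum exactly at $k=k_0$, so its quartic Taylor term is available cleanly. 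All the hypotheses are used: $1\le k\le n-1$ and $n<D\le\lfloor N/2\rfloor$ keep $a,b,c,d$ strictly positive (needed for Stirling, for $h',h''$, and for $R\le1$), while $D\le N/2$ together with $k\ge nD/N$ supplies both the sign $|b-d|\ge2(s-k_0)$ and the nonnegativity of $E'$ on which the surplus estimate rests.
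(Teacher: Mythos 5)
Your proposal is correct and follows essentially the same route as the paper: Robbins--Stirling to isolate the square-root prefactor and show the error factor is at most one, then a lower bound on the exponent obtained by splitting it into the binomial relative entropy $n\Psi(u,\mu)$ plus a finite-sampling surplus (your $E$ is exactly the paper's $\Gamma=(N-n)\Psi\left(\tfrac{n}{N-n}u,\,1-\mu\right)$) and integrating a second-derivative inequality twice from the mean $k_0=nD/N$, where both vanish to first order. The only substantive difference is that you bound the surplus via $E''=\tfrac1b+\tfrac1d=\tfrac{4}{N-n}+\tfrac{(b-d)^2}{(N-n)bd}$ together with AM--GM and the sign of $b-d$ (which uses $D\le N/2$ and $k\ge nD/N$ exactly as the paper uses them), which is the same $\tfrac{1}{1-x}\ge 1+x$ estimate written in unnormalized count variables and in fact yields a marginally sharper quartic coefficient, $\tfrac{4n^4}{3(N-n)^3}$ in place of the paper's $\tfrac{n^4}{3(N-n)^3}$, both of which suffice for the stated bound.
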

\begin{proof}[{\bf Proof}]
The proof follows by direct analysis. Using
Stirling's formula \eqref{STIRLINGS_FORMULA}, we have
\begin{align}
 P(S_n=k) &=\frac{{D \choose k}{N-D \choose n-k}}{{N \choose n}} \nonumber \\
 &\leq \frac{1}{\sqrt{2\pi}} \frac{\sqrt{D(N-D) n (N-n)}}{ \sqrt{k(D-k)(n-k) (N-D-(n-k)) N}} \nonumber \\
 & \ \ \ \cdot \frac{D^D (N-D)^{N-D} n^n (N-n)^{N-n} }
                     { k^k (D-k)^{D-k} (n-k)^{n-k} (N-D - (n-k))^{N-D -(n-k)} N^N} \nonumber \\
 & \ \ \ \cdot \frac{\exp \left ( \frac{1}{12D} + \frac{1}{12(N-D)} + \frac{1}{12n} + \frac{1}{12(N-n)} \right ) } 
                            {\exp \left ( \frac{1}{12k+1} + \frac{1}{12 (D-k)+1} + \frac{1}{12 (n-k)+1} 
                              + \frac{1}{12(N-D-(n-k))+1} + \frac{1}{12 N+1} \right )} \nonumber \\
&=:  A \cdot B \cdot C\ .  \label{hgdecomposition}
\end{align}
We consider the $B$ term first. Define $u := k/n - \mu$, recalling $\mu := D/N$. We then have
\begin{align}
B &=  \frac{D^D (N-D)^{N-D} /N^N}{\left (\frac{k}{n} \right )^{k} \cdot \left ( 1 - \frac{k}{n} \right )^{n-k} 
              \cdot \left ( \frac{D-k}{N-n} \right )^{D-k} \left (1 - \frac{D-k}{N-n} \right )^{N-n-(D-k)} } \nonumber \\
&=  \frac{\left ( \frac{D}{N} \right )^D \left ( 1 - \frac{D}{N} \right )^{N-D}}
           { \left ( \frac{k}{n} \right )^k \left (1 - \frac{k}{n} \right )^{n-k} 
                \left ( \frac{D-k}{N-n} \right )^{D-k} \left (1 - \frac{D-k}{N-n} \right )^{N-n - (D-k)} } \nonumber \\
&= \frac{\left ( \frac{D}{N} \right )^D}{\left ( \frac{k}{n} \right )^{k} \left ( \frac{D-k}{N-n} \right )^{D-k}} \cdot 
          \frac{\left (1- \frac{D}{N} \right )^{N-D}}
                 {\left ( 1 - \frac{k}{n} \right )^{n-k} \left ( 1 - \frac{D-k}{N-n} \right )^{N-n-(D-k)} } \nonumber \\
&=  \frac{\left ( \frac{D}{N} \right )^k \left (\frac{D}{N} \right )^{D-k}}
                 {\left ( \frac{k}{n} \right )^{k} \left ( \frac{D-k}{N-n} \right )^{D-k}} \cdot 
          \frac{\left (1- \frac{D}{N} \right )^{n-k} \left (1- \frac{D}{N} \right )^{N-D- (n-k)}}
                 {\left ( 1 - \frac{k}{n} \right )^{n-k} \left ( 1 - \frac{D-k}{N-n} \right )^{N-n-(D-k)} } \nonumber \\
&=  \left ( \frac{\mu}{u+\mu} \right )^k \left ( \frac{\frac{N-n}{N}}{\frac{D-k}{D}} \right )^{D-k} \cdot 
          \left ( \frac{1-\mu}{1 - (u+\mu)} \right )^{n-k} \left ( \frac{\frac{N-n}{N}}{\frac{N-n-(D-k)}{N-D}} \right )^{N-D-(n-k)} \nonumber \\
&=  \exp ( - n \Psi (u, \mu)) \cdot \frac{\left ( \frac{N-n}{N} \right )^{N-n}}
                         {\left ( \frac{D-k}{D} \right )^{D-k} \left ( \frac{N-D - (n-k)}{N-D} \right )^{N-D - (n-k)} } \nonumber \\
&=   \exp ( - n \Psi (u, \mu)) \cdot  B_2 \nonumber
\end{align}
where the first factor corresponds to the same function as in Talagrand's argument for the binomial distribution \citep[pp.~48--50]{TALAGRAND1} and we recall
\[
\Psi(u,\mu) := (u+\mu) \log \left ( \frac{u+\mu}{\mu} \right ) + \left ( 1 - (u+\mu) \right ) \log \left ( \frac{1 - (u+\mu)}{1-\mu} \right ) .
\]
Now, we can further re-write $B_2$ as
\begin{align}
B_2 
=   \left ( \frac{\frac{N-n}{N}}{\frac{D-k}{D}} \right )^{D-k}  
             \cdot  \left ( \frac{\frac{N-n}{N}}{\frac{N-n-(D-k)}{N-D}} \right )^{N-D-(n-k)}     
=: \exp( - \Gamma ) \nonumber
\end{align}
where
\begin{align}
\Gamma &=  -\log (B_2) \nonumber  \\
&=  (D-k) \log \left[ \frac{\left(\frac{D-k}{D}\right)}{\left (\frac{N-n}{N} \right )} \right] 
+\left[ N-n - (D-k)\right] \log \left[\frac{\left ( \frac{N-n - (D-k)}{N-D} \right )}{\left ( \frac{N-n}{N} \right ) }\right] \nonumber \\
\nonumber \\
&=  (N-n) \left(\frac{D-k}{N-n}\right) \log \left[ \frac{(D-k)/D}{(N-n)/N} \right] 
+ \left(1 - \frac{D-k}{N-n}\right) \log \left[ \frac{[N-n - (D-k)]/(N-D)}{(N-n)/N} \right]\ .\nonumber  
\label{B2-equivalentForm}
\end{align} 
Now $k = n(u + \mu)$, so 
\begin{align}
\frac{D-k}{N} =  \mu - \frac{n}{N} (u +\mu) = \mu(1 -n/N) - (n/N) u\ . \nonumber
\end{align}
and 
\begin{align}
\frac{(D-k)/N}{(N-n)/N} = \frac{\mu(1-(n/N)) - (n/N) u}{1 - n/N} = \mu - \frac{n/N}{1-n/N} u\ . \nonumber
\end{align}
Thus we also have 
\begin{align}
1 - \frac{(D-k)/N}{(N-n)/N} = 1-\mu + \frac{n/N}{1-(n/N)} u\ . \nonumber
\end{align}
Thus it follows that, with $f = f_N := n/N$, $\overline{f} = \overline{f}_N := 1 - f_N$, 
\begin{align}
\frac{\Gamma}{N-n} 
&=  \left ( \mu - \frac{f}{\overline{f}}u \right ) \log \left[ \left ( \mu - \frac{f}{\overline{f}}u \right ) \frac{1}{\mu} \right] 
+ \  \left(1-\mu  + \frac{f}{\overline{f}}u \right ) \log \left[ \left (1- \mu +\frac{f}{\overline{f}}u \right ) \frac{1}{1-\mu} \right] 
\nonumber \\
\nonumber \\
&=  \Psi \left ( \frac{f}{\overline{f}} u , 1-\mu \right ) \nonumber
\end{align}
where $\Psi $ is as defined above.  Thus the $B$ term can be rewritten as 
\[
B = \exp \left ( -n\Psi ( u, \mu) - (N-n) \Psi \left ( \frac{f}{\overline{f}} u , 1-\mu \right ) \right )\ . 
\]
Now $\Psi$ satisfies $\Psi (0,\mu) = 0$, $\frac{\partial}{\partial u} \Psi (0,\mu) =0$, and, 
as in Talagrand (as well as  van der Vaart and Wellner \citep[pp.~460-461]{MR1385671}), 
\[
\frac{\partial^2}{\partial u^2} \Psi (u,\mu) = \frac{4}{1 - 4 (u - (1/2-\mu))^2} \ge 4 \left (1 + 4 ( u - (1/2-\mu))^2 \right )\ .
\]
Thus 
\begin{align}
&\frac{\partial^2}{\partial u^2} \left[ n \Psi (u,\mu) 
            + (N-n) \Psi \left ( \frac{f}{\overline{f}} u , 1-\mu \right ) \right] \nonumber \\
&=  n \frac{4}{1 - 4(u- (1/2-\mu))^2}
          + (N-n) \frac{4 (f /\overline{f})^2}{1 - 4 \left (\frac{f}{\overline{f}} u - (\mu-1/2) \right )^2} \nonumber \\
&\ge  4 n \left ( 1 + 4 (u - (1/2-\mu))^2 \right ) + 4 (N-n) (f / \overline{f} )^2  
                \left ( 1 + 4 \left (\frac{f}{\overline{f}} u - (\mu-1/2)\right )^2 \right )\ . \nonumber
\end{align}
Integration across this inequality yields 
\begin{align}
\frac{\partial}{\partial u} \left[ n \Psi (u,\mu) + (N-n) \Psi \left ( \frac{f}{\overline{f}} u , 1-\mu \right ) \right]
&\ge  4n \left ( u + \frac{1}{3} u^3 \right ) 
             + 4 (N-n) \left ( \frac{f}{\overline{f}} \right )^2   \left ( u + \frac{1}{3} \left ( \frac{f}{\overline{f}} \right )^2 u^3 \right ) 
              \nonumber \\
&=  4 \left ( n + (N-n) \left (\frac{n}{N-n} \right )^2 \right ) u \nonumber \\
& \ \ \ + \ \frac{4}{3} \left (n + (N-n) \left ( \frac{n}{N-n} \right )^4 \right ) u^3    \nonumber \\
&=  \frac{4 nN}{N-n} u  + \frac{4}{3} n \left ( 1 + \frac{n^3}{(N-n)^3} \right ) u^3\ .   \label{FirstDerivativeInequality}
\end{align}
Here we used
\begin{align} 
\int_0^u ( 1 + 4 (v - (1/2-\mu))^2) dv 
&=  u + \frac{4}{3} ( v - (1/2-\mu))^3 \bigg |_0^u \nonumber \\
&=  u + \frac{4}{3} \left[ u - (1/2 -\mu)^3 - (- (1/2-\mu))^3 \right] \nonumber \\
&=  u + \frac{4}{3} \left[ (u - (1/2-\mu))^3 + (1/2-\mu)^3 \right] \nonumber \\
&\ge  u + \frac{4}{3} \left[ (u/2)^3 + (u/2)^3 \right] \nonumber \\
&=  u + (1/3) u^3 \nonumber
\end{align}
where the inequality follows since the function $\beta \mapsto (u-\beta)^3 + \beta^3$ is minimized by 
$\beta = u/2$: with $h_u (\beta) \equiv (u-\beta)^3 + \beta^3$,
\begin{align}
h_u' (\beta) 
&=  3 (u-\beta)^2 (-1) + 3 \beta^2 = 3 \{ \beta^2 - (\beta^2 - 2u \beta + u^2)\} \nonumber \\
&=  3 u \{ 2\beta - u \} = 0 \ \ \ \mbox{if} \ \ \beta = u/2, \nonumber
\end{align}
while $h_u'' (\beta) = 6u > 0$.
Similarly,
\begin{align} 
\int_0^u \left ( 1 + 4 \left (\frac{f}{\overline{f}} v - (\mu-1/2) \right )^2 \right ) dv 
&=  u + \frac{4}{3}\left ( \frac{f}{\overline{f}} v - (\mu-1/2) \right )^3 \frac{\overline{f}}{f} \bigg |_0^u \nonumber \\
&=  u + \frac{4}{3}\frac{\overline{f}}{f}  \left[  \frac{f}{\overline{f}} u -  (\mu-1/2)^3 - \left (- (\mu-1/2) \right )^3 \right] \nonumber \\
&=  u + \frac{4}{3} \frac{\overline{f}}{f} \left[ \left ( \frac{f}{\overline{f}} u - (\mu-1/2) \right )^3 + (\mu-1/2)^3 \right] \nonumber \\
&\ge  u + \frac{4}{3} \frac{\overline{f}}{f} \left[ \left ( \frac{f u}{\overline{f} 2} \right )^3 +   \left ( \frac{f u}{\overline{f} 2} \right )^3 \right]
\nonumber\\
&=  u + \frac{1}{3} \left ( \frac{f}{\overline{f}} \right)^2 u^3\ . \nonumber
\end{align}
Integrating across \eqref{FirstDerivativeInequality} yields 
\begin{align}
n \Psi (u,\mu) + (N-n) \Psi \left ( \frac{f}{\overline{f}} u , 1-\mu \right ) 
\ge  \frac{2nN}{N-n} u^2  + (1/3) n \left ( 1 + \frac{n^3}{(N-n)^3} \right ) u^4\ . \nonumber
\end{align}
Thus the $B$ term in \eqref{hgdecomposition} has the following bound:
\begin{align}
B &\leq \exp\left(-\frac{2nN}{N-n} u^2\right) \exp\left(-\frac{n}{3}\left(1+\frac{n^3}{(N-n)^3}\right) u^4 \right)\ . 
\label{Bbound}
\end{align}
We next analyze the $C$ term in \eqref{hgdecomposition}. We have
\begin{align}
C &=
\frac{\exp \left ( \frac{1}{12D} + \frac{1}{12(N-D)} + \frac{1}{12n} + \frac{1}{12(N-n)} \right ) } 
                            {\exp \left ( \frac{1}{12k+1} + \frac{1}{12 (D-k)+1} + \frac{1}{12 (n-k)+1} 
                              + \frac{1}{12(N-D-(n-k))+1} + \frac{1}{12 N+1} \right )} \nonumber \\
&=
\exp\left(\frac{1}{12D}-\frac{1}{12 (D-k)+1}\right)
\exp\left(\frac{1}{12(N-D)}-\frac{1}{12(N-D-(n-k))+1}\right)\nonumber \\
&\ \cdot \exp\left(\frac{1}{12n}-\frac{1}{12k+1}\right)
\exp\left(\frac{1}{12(N-n)}-\frac{1}{12 (n-k)+1}\right)\exp\left(-\frac{1}{12 N+1}\right) \nonumber \\
 &=
 \exp\left(\frac{-12k+1}{[12D][12(D-k)+1]}\right)
 \exp\left(\frac{-12[n-k]+1}{[12(N-D)][12([N-D]-[n-k])+1]}\right) \nonumber\\
& \ \ \ \cdot
\exp\left(\frac{1-12 (n-k)}{12 (12 k+1) n}\right)
\exp\left(\frac{1-12(N-2n+k)}{12 (12(n-k)+1) (N-n)}\right)
\exp\left(-\frac{1}{12 N+1}\right) \nonumber \\
&\leq 1 \label{Cbound}
\end{align}
where the final inequality follows since $k \in [\lceil n\mu\rceil,\dots, n-1]$ and $n \leq D \leq \lfloor N/2\rfloor$ 
which implies that each exponential argument preceding the inequality is negative. 
This gives a bound of $1$ on the product.
As the $A$ term in \eqref{hgdecomposition} is already in the claimed form, 
combining \eqref{Bbound} and \eqref{Cbound} proves
the claim.
\end{proof}

Next we develop an upper bound for hypergeometric tail probabilities. This bound is similar to that discussed by Feller for the binomial \citep[pp.~150-151]{FELLER1}.
To our knowledge this result is new.

\begin{lem}\label{hgbinanalogue}
Suppose $S_{n,D,N} \sim Hypergeometric(n,D,N)$, $N>4$ and $1 \leq n, D \leq N-1$. For $k >  (nD)/N$,  we have
\begin{align}
P\left(S_{n,D,N} \geq k\right) \leq 
P\left(S_{n,D,N} = k\right) 
\left(\frac{k(N-D-n+k)}{Nk-nD}\right).\label{HYPERGEOMETRIC_TAIL_BOUND}
\end{align}
\end{lem}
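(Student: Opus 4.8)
The plan is to dominate the upper tail by a geometric series, exactly as in the classical binomial estimate of Feller. Write $p_j := P(S_{n,D,N} = j)$ and recall that $p_j > 0$ precisely for $\max\{0, n-(N-D)\} \le j \le \min\{D,n\}$. Then
\[
P(S_{n,D,N} \ge k) = p_k \left( 1 + \sum_{j=k+1}^{\min\{D,n\}} \frac{p_j}{p_k} \right),
\]
and each factor $p_j/p_k$ telescopes as $\prod_{i=k}^{j-1} (p_{i+1}/p_i)$. A direct computation with the hypergeometric pmf gives
\[
\frac{p_{i+1}}{p_i} = \frac{(D-i)(n-i)}{(i+1)(N-D-n+i+1)}.
\]

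First I would record the sign bookkeeping: since $(N-D)(n-N) \le 0$ one has $nD/N \ge n+D-N$, so the hypothesis $k > nD/N$ forces $N-D-n+k \ge 1$ and $k \ge 1$; hence on the range $i \ge k$ every factor appearing above is strictly positive. Next I would show that $i \mapsto p_{i+1}/p_i$ is nonincreasing on $\{k, \dots, \min\{D,n\}-1\}$: the numerator $(D-i)(n-i)$ is a product of two nonnegative decreasing functions of $i$, while the denominator is a product of two positive increasing functions. Consequently, for every $i \ge k$,
\[
\frac{p_{i+1}}{p_i} \le \frac{(D-k)(n-k)}{(k+1)(N-D-n+k+1)} \le \frac{(D-k)(n-k)}{k(N-D-n+k)} =: r,
\]
the last inequality because replacing $k+1$ and $N-D-n+k+1$ by the smaller positive quantities $k$ and $N-D-n+k$ can only increase the fraction. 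The point of the hypothesis is that it makes $r < 1$: the one-line expansion $k(N-D-n+k) - (D-k)(n-k) = Nk - nD$ shows $1 - r = (Nk-nD)/\bigl(k(N-D-n+k)\bigr)$, which is positive exactly when $k > nD/N$, and clearly $r \ge 0$.

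Hence $p_j/p_k \le r^{\,j-k}$ for all $j \ge k$, and summing the geometric series,
\[
P(S_{n,D,N} \ge k) \le p_k \sum_{\ell=0}^{\infty} r^\ell = \frac{p_k}{1-r} = p_k \cdot \frac{k(N-D-n+k)}{Nk - nD},
\]
using the same identity again. It remains only to treat the degenerate case $k = \min\{D,n\}$ in which the sum above is empty: there $P(S_{n,D,N} \ge k) = p_k$, and the asserted multiplier equals $1 + (D-k)(n-k)/(Nk-nD) \ge 1$, so the inequality still holds.

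The only genuine work is the monotonicity of $p_{i+1}/p_i$ together with the sign checks guaranteeing $N-D-n+k \ge 1$ and $r \in [0,1)$; the algebraic identity $k(N-D-n+k) - (D-k)(n-k) = Nk - nD$ is routine, and I do not expect $N>4$ to enter beyond ensuring the parameter ranges are non-vacuous. I expect the main obstacle, such as it is, to be keeping the edge cases of the hypergeometric support straight when asserting positivity and monotonicity of the ratio.
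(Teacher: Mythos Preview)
Your proof is correct and reaches the same bound, but it is organized differently from the paper's argument. The paper proceeds by \emph{downward induction} on $k$: it verifies the boundary case $k=n\wedge D$ (where the multiplier equals $1$), then assumes the bound at $j$ and deduces it at $j-1$ by writing $P(S\ge j-1)=p_{j-1}+P(S\ge j)$, using the ratio $p_j/p_{j-1}=(D-j+1)(n-j+1)/\bigl(j(N-D-n+j)\bigr)$, and checking the algebraic inequality
\[
1+\frac{(D-j+1)(n-j+1)}{Nj-nD}\ \le\ \frac{(j-1)(N-D-n+j-1)}{N(j-1)-nD},
\]
which after simplification reduces to the nonnegativity of $N(D-j+1)(n-j+1)/\bigl((Nj-nD)(N(j-1)-nD)\bigr)$.

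Your route is the direct Feller-style geometric bound: you observe that $i\mapsto p_{i+1}/p_i$ is nonincreasing, dominate every ratio by $r=(D-k)(n-k)/\bigl(k(N-D-n+k)\bigr)$, and use the identity $k(N-D-n+k)-(D-k)(n-k)=Nk-nD$ to see both that $r<1$ and that $1/(1-r)$ is exactly the asserted multiplier. This is shorter and avoids the paper's inductive algebra; the price is the slight relaxation $(k+1)(N-D-n+k+1)\ge k(N-D-n+k)$, but thanks to the identity the final bound coincides exactly. Your sign checks (in particular $N-D-n+k\ge 1$ via $nD/N\ge n+D-N$) and the handling of the edge case $k=\min\{D,n\}$ are sound, and you are right that the hypothesis $N>4$ plays no role here.
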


\begin{proof}[{\bf Proof}]

Suppose first that $n \leq D$ and $k=n$. Then \eqref{HYPERGEOMETRIC_TAIL_BOUND} becomes
\[
P\left(S_{n,D,N} \geq n\right) \leq 
P\left(S_{n,D,N} = n\right) \left(\frac{n(N-D-n+n)}{Nn-nD}\right)
= 
P\left(S_{n,D,N} = n\right) \left(\frac{n(N-D)}{n(N-D)}\right)
=
P\left(S_{n,D,N} = n\right)\ .
\]
Since $P\left(S_{n,D,N} \geq n\right)=P\left(S_{n,D,N} = n\right)$, the result holds in this case.
Next, suppose $D < n$ and $k=D$. Then \eqref{HYPERGEOMETRIC_TAIL_BOUND} becomes
\[
P(S_{n,D,N}\geq D)\leq P(S_{n,D,N}=D)\left(\frac{D(N-D-n+D)}{ND-nD}\right)=P(S_{n,D,N}=D)\left(\frac{D(N-n)}{D(N-n)}\right) = P(S_{n,D,N}=D)\ .
\]
Since $P(S_{n,D,N}\geq D)=P(S_{n,D,N}=D)$, the result holds in this case too.

If $(n,D,N)$ is a population such that $\lfloor (nD)/N\rfloor +1 = n\wedge D$, we are done.
Supposing this is not the case, let $\lfloor (n D)/N\rfloor +1 \leq (j-1) < j \leq n\wedge D$.  
Assume the result holds when $k=j$. We will show this implies the result holds for $k=j-1$. We have
\begin{align}
P\left(S_{n,D,N} \geq j-1\right) 
&=
P\left(S_{n,D,N} = j-1\right) +
P\left(S_{n,D,N} \geq j\right)  \nonumber \\
&\leq
P\left(S_{n,D,N} = j-1\right) +
P\left(S_{n,D,N} = j\right)\left[\frac{j(N-D-n+j)}{Nj-nD}\right]   \text{(by induction hypothesis)} \nonumber \\
&=
P\left(S_{n,D,N} = j-1\right)
\left[1+\frac{P\left(S_{n,D,N} = j\right)}{P\left(S_{n,D,N} = j-1\right)}\left[\frac{j(N-D-n+j)}{Nj-nD}\right]\right] \nonumber \\
&=
P\left(S_{n,D,N} = j-1\right)
\left[1+\frac{(D-j+1)(n-j+1)}{j(N-D-n+j)}\left[\frac{j(N-D-n+j)}{Nj-nD}\right]\right] \nonumber \\
&=
P\left(S_{n,D,N} = j-1\right)
\left[1+\frac{(D-j+1)(n-j+1)}{Nj-nD}\right]\ . \nonumber
\end{align}
Under the current assumption, the right-hand side equals
\[
P\left(S_{n,D,N} = j-1\right) 
\left[\left(\frac{(j-1)(N-D-n+j-1)}{N(j-1)-nD}\right)\right]\,
\]
so we see it is enough to show
\[
\left[\left(\frac{(j-1)(N-D-n+j-1)}{N(j-1)-nD}\right)\right]-\left[1+\frac{(D-j+1)(n-j+1)}{Nj-nD}\right]\geq 0\ .
\]
Combining terms and simplifying, we find this equivalent to showing
\[
\frac{N (D-j+1)(n-j+1)}{(Nj-nD)(N(j-1)-nD)} \geq 0\ .
\]
Since we assume $\lfloor (n D)/N\rfloor +1 \leq (j-1) < j \leq n \wedge D$, 
we see that each term in parentheses  in the
fraction is non-negative. In particular, since $j \geq \lfloor (nD)/N \rfloor +2 > (nD)/N+1,$ we have
\[
N(j-1)-nD > N((nD)/N)-nD=0\ .
\]
Thus, the expression is non-negative. This implies the claim.
\end{proof}

We next prove a technical lemma.

\begin{lem}\label{HG_Technical_Lemma}
Fix $N > 4$. Suppose that $n < D \leq \lfloor N/2\rfloor$ and that $\gamma := (N-n)/n$. 
For all triples 
\[
(\mu,u,\gamma) \in \left[\frac{n+1}{N},\frac{1}{2}\right] \times \bigg (0,\frac{1}{2}\bigg] \times (1,\infty)
\]
we have
\begin{align}
\frac{\mu(1-\mu)(u+\mu)(\gamma(1-\mu)+u)}{(1-u-\mu)(\gamma\mu-u)}
\leq
\frac{1}{4}\frac{(u+(1/2))(\gamma(1-(1/2))+u)}{(1-u-(1/2))(\gamma(1/2)-u)}\ .\label{A_TERM_FUNCTION_BOUND}
\end{align}
\end{lem}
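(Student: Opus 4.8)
The key observation is that the right-hand side of \eqref{A_TERM_FUNCTION_BOUND} is simply the left-hand side evaluated at $\mu=1/2$. Writing
\[
\Phi(\mu):=\frac{\mu(1-\mu)(u+\mu)(\gamma(1-\mu)+u)}{(1-u-\mu)(\gamma\mu-u)},
\]
one checks by direct substitution that the displayed right-hand side equals $\Phi(1/2)$, so the lemma asserts $\Phi(\mu)\le\Phi(1/2)$ for fixed $u\in(0,1/2]$, $\gamma>1$ and all $\mu\in[(n+1)/N,1/2]$. First I would record that every denominator factor is strictly positive on the domain: $1-u-\mu\ge0$ and $1/2-u\ge0$ are immediate, $\gamma/2-u\ge(\gamma-1)/2>0$, and since $\mu\ge(n+1)/N>n/N=1/(1+\gamma)$ one has $\gamma\mu>1-\mu\ge1/2\ge u$, hence $\gamma\mu-u>1-u-\mu\ge0$. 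This is the one place the precise endpoint $(n+1)/N$ enters; it also yields $t:=1/2-\mu\in[0,\tfrac{\gamma-1}{2(\gamma+1)})$, a bound needed later.

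Next I would clear denominators and pass to the variable $t=1/2-\mu$. Using the identities
\[
(u+\mu)(\gamma/2-u)=(\gamma\mu-u)(1/2+u)+u(\gamma+1)t,\qquad (\gamma(1-\mu)+u)(1/2-u)=(1-u-\mu)(\gamma/2+u)-u(\gamma+1)t,
\]
together with $4\mu(1-\mu)=1-4t^2$, and writing $A:=(\gamma\mu-u)(1/2+u)>0$, $B:=(1-u-\mu)(\gamma/2+u)>0$, $v:=u(\gamma+1)>0$ (all affine in $t$), the inequality becomes exactly $(1-4t^2)(A+vt)(B-vt)\le AB$. This is an equality at $t=0$; subtracting and dividing the difference by $t>0$ reduces the lemma to showing that the cubic
\[
C(t):=v(B-A)-(v^2+4AB)\,t-4v(B-A)\,t^2+4v^2t^3
\]
satisfies $C(t)\le0$ for $0<t\le t_{\max}:=1/2-(n+1)/N$. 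A short computation puts $C$ in the transparent form
\[
C(t)=-u^2(\gamma^2-1)+C'(0)\,t+4w\,t^2\bigl(\gamma t-u(\gamma-1)\bigr),\qquad C''(t)=8w\bigl(3\gamma t-u(\gamma-1)\bigr),
\]
where $w:=\tfrac14(1-2u)(\gamma-2u)\ge0$, $C'(0)=u\gamma(\gamma+1)-4a_0$, and $a_0:=(\gamma^2/4-u^2)(1/4-u^2)\ge0$.

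Now I would split on the sign of $\gamma t-u(\gamma-1)$. If $t\le u(\gamma-1)/\gamma$ the cubic-in-$t$ term is $\le0$, so $C(t)\le -u^2(\gamma^2-1)+C'(0)t$; since $a_0\ge0$ forces $C'(0)\le u\gamma(\gamma+1)$, one has $C'(0)t\le u\gamma(\gamma+1)\cdot u(\gamma-1)/\gamma=u^2(\gamma^2-1)$ on this range (trivially so if $C'(0)<0$), whence $C(t)\le0$. If instead $t>u(\gamma-1)/\gamma$, then $C''(t)\ge0$ throughout the interval $[\,u(\gamma-1)/\gamma,\ \tfrac{\gamma-1}{2(\gamma+1)}\,]$, so $C$ is convex there and therefore $C(t)\le\max\!\bigl(C(u(\gamma-1)/\gamma),\ C(\tfrac{\gamma-1}{2(\gamma+1)})\bigr)$ for every such $t$ (recall $t<\tfrac{\gamma-1}{2(\gamma+1)}$). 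The first value equals $C$ at $t=u(\gamma-1)/\gamma$, which is $\le0$ by the estimate just given since the cubic term vanishes there. Thus everything comes down to the single inequality $C\!\bigl(\tfrac{\gamma-1}{2(\gamma+1)}\bigr)\le0$, which corresponds to $\mu=1/(\gamma+1)$; after factoring out $\gamma-1>0$ this is a polynomial inequality in the two variables $u\in(0,1/2]$ and $\gamma>1$ only.

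I expect this last two-variable polynomial inequality to be the main obstacle. It is purely elementary — one can clear the $(\gamma+1)^3$ denominator and sign-analyze the resulting polynomial, and it holds comfortably at the extreme values $u=0$ and $u=1/2$ — but it genuinely requires the careful bookkeeping that uses all three hypotheses $0<u\le1/2$, $\gamma>1$, and $\mu\ge(n+1)/N$ (through $t_{\max}<\tfrac{\gamma-1}{2(\gamma+1)}$). Everything preceding it is either algebraic rearrangement or the convexity argument, both routine once the substitution $t=1/2-\mu$ and the reduction to $C(t)\le0$ are in place.
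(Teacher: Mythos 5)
Your reduction is a genuinely different (and in principle cleaner) route than the paper's: the paper factors the quantity as $f(\mu)g(\mu)$, locates the critical points of each factor, builds a piecewise envelope $\mathrm{maj}(\mu)$, and reduces everything to the single endpoint comparison $ep(1/2)\ge ue((n+1)/N)$, which it then verifies by an explicit sign analysis of a polynomial in $(u,n,N)$. Your substitution $t=1/2-\mu$, the identities producing $(1-4t^2)(A+vt)(B-vt)\le AB$, the explicit cubic $C(t)$ (whose coefficients I have checked and which are correct), and the convexity case split are all sound. Both arguments funnel the entire difficulty into one final polynomial inequality at the left end of the $\mu$-range.

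The problem is that you do not prove that final inequality, $C\bigl(\tfrac{\gamma-1}{2(\gamma+1)}\bigr)\le 0$, and it is not the routine "sign-analyze after clearing denominators" step you suggest. Your evaluation point $t=\tfrac{\gamma-1}{2(\gamma+1)}$ corresponds to $\mu=1/(\gamma+1)=n/N$, i.e.\ you have discarded the $+1/N$ of slack in the hypothesis $\mu\ge(n+1)/N$ --- precisely the slack the paper's own endpoint verification leans on (its positivity argument runs through the factor $N-2n-2\ge 0$, which comes from $n\le\lfloor N/2\rfloor-1$). Worse, at your evaluation point the inequality is asymptotically tight: writing $C\bigl(\tfrac{\gamma-1}{2(\gamma+1)}\bigr)=\gamma^2 c_2(u)+\gamma c_1(u)+\cdots$, one finds $c_2(u)\equiv 0$ (e.g.\ for $\gamma=1000$, $u=0.2$ the individual terms are of size $\sim 10^5$ while their sum is $\approx -206$), so the sign is decided entirely by the subleading term and no crude term-by-term bound will close it. Numerically the inequality does appear to hold throughout $(0,1/2]\times(1,\infty)$, so the approach is salvageable, but as written the proof is incomplete: the step you yourself identify as ``the main obstacle'' is exactly the step that is missing, and it requires either a careful expansion isolating the $O(\gamma)$ term or a retreat to the true endpoint $t_{\max}=1/2-(n+1)/N$, which reintroduces the integer bookkeeping the paper carries out.
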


We pause to outline the strategy used to prove this statement, since the proof requires a rather detailed algebraic argument.
We break the quantity into two functions, $f$ and $g$, 
the second of which, $g$, is parabolic on $\mu \in [(n+1)/N,1/2]$. We demonstrate
that $f$ is maximized at $\mu=1/2$. We do this  by obtaining the only root which falls in the interval, determining that it yields a local minimum, and finally
showing the function is larger at the upper boundary of $\mu=1/2$. 

We then show that $g$ has a local maximum in the interior of the interval (for $0<u<1/2$). 
Using the quadratic $g$ function as a scaling function,
we then define an upper envelope to the function of interest in terms of $f$, along with a second function that agrees with the function of interest at $\mu=1/2$. 
By defining the two new functions in terms of $f$ (scaled by positive numbers, which are obtained at fixed-points of $g$),
we are still able to claim these functions are maximized at $\mu=1/2$.

We then demonstrate the function of interest increases monotonically between the value of $\mu$ where it intersects its envelope and $\mu=1/2$. 
We finally show that at the right endpoint of $\mu=1/2$, the quantity of interest exceeds
its envelope at the left end-point. This will prove the claim; the details now follow.

\begin{proof}[{\bf Proof of Lemma~\ref{HG_Technical_Lemma}}]
With the previous comments in mind, define the following functions:
\begin{align}
f(\mu) &:=  \frac{\mu\left(1-\mu\right)}{\left(1-u-\mu\right)(\gamma\mu-u)} \nonumber \\
\text{ and } \ \ g(\mu) &:= \left(u+\mu\right)\left(\gamma(1-\mu)+u\right)\ . \label{twofunctions}
\end{align}
Note that the product $f(\mu)g(\mu)$ gives the quantity on the left-hand side of \eqref{A_TERM_FUNCTION_BOUND}. We first analyze $f(\mu)$.  Taking its derivative, we find
\[
f'(\mu) = 
\frac{u ( (\gamma -1) \mu^2 +2\mu(1-u)-(1-u))}{(1-u -\mu)^2 (u-\gamma  \mu )^2}\ .
\]
Seeking critical points, we find $f'(\mu)$ has the following roots:
\[
\frac{\pm \sqrt{(1-u)(\gamma-u)}+u-1}{\gamma -1}\ .
\]
Since $\mu \in (0,1/2)$, only the positive root is of potential interest. Since $\gamma > 1$ under the current restrictions, we have
\[
\frac{ \sqrt{(1-u)(\gamma-u)}+u-1}{\gamma -1} \geq \frac{ \sqrt{(1-u)^2}+u-1}{\gamma -1} = 0\ .
\] 
Additionally, we can see
\[
\frac{\sqrt{(1-u)(\gamma-u)}+u-1}{\gamma -1} \leq \frac{1}{2}  
\]
since, after algebra, it is equivalent to showing
\[
0 \leq
\frac{(\gamma-1)^2}{4}  
\]
which follows under the assumptions. A similar argument shows that the corresponding root with the negative radical is always negative, and therefore does not affect the current investigation.
Next, differentiate again and evaluate the second derivative at the root. We then find
\begin{align}
f''\left(\mu\right)\ \bigg|_{ \left(\frac{\sqrt{(1-u)(\gamma-u)}+u-1}{\gamma -1}\right)} 
&= \frac{\left[2 (\gamma -1)^4 (1-u) u (\gamma-u)\right] \left[\left(\gamma ^2+1\right) u+2 \gamma  \sqrt{(1-u) (\gamma-u)}-\gamma^2 -\gamma \right]}
{\left[\sqrt{(1-u) (\gamma-u)}-\gamma  (1-u)\right]^3\left[\gamma  \left(\sqrt{(1-u) (\gamma-u)}-1\right)+u\right]^3}  \nonumber \\
&=: \frac{[a(u,\gamma)][b(u,\gamma)]}{[c(u,\gamma)]^3[d(u,\gamma)]^3}\ . \nonumber
\end{align}
We next show that this quantity is positive for any $(u,\gamma) \in (0,1/2)\times (1,\infty)$.  
It is clear that $a(u,\gamma)$ is always positive under the current assumption, since each term in the product is positive.

We next claim $b(u,\gamma) < 0$ for all $(u,\gamma) \in (0,1/2)\times(1,\infty)$.
This claim is equivalent to showing
\[
2\gamma  \sqrt{(1-u) (\gamma-u)}< \gamma^2(1-u)+(\gamma-u)\ .
\]
Since both sides are positive, we square both sides and simplify to find that the claim is equivalent to showing
\[
0 < (\gamma -1)^2 (-\gamma +\gamma  u+u)^2\ .
\]
As this last claim follows for any admissible pair, we conclude $b(u,\gamma) < 0$ for all $(u,\gamma) \in (0,1/2)\times(1,\infty)$.

We next show that $c(u,\gamma) < 0$ for all $(u,\gamma) \in (0,1/2)\times(1,\infty)$. This claim is equivalent to
\[
(1-u)(\gamma-u) < \gamma^2(1-u)^2
\]
which, after expanding and re-arranging, is equivalent to the claim
\[
0 < (\gamma -1) (1-u) (\gamma -\gamma  u-u)
\] 
for all $(u,\gamma) \in (0,1/2)\times(1,\infty)$. On this set, it is clear $\gamma-1$ and $1-u$ are positive for any admissible pair. Hence, we need only show $(\gamma-\gamma u-u)>0$
on this set. But this is equivalent to claiming $\gamma(1-u)>u$ for any pair on this set, which is true because $\gamma > 1$ and $u < 1/2$. Thus we conclude $c(u,\gamma)<0$ for all
$(u,\gamma) \in (0,1/2)\times(1,\infty)$.

We finish this sub-argument by showing $d(u,\gamma) > 0$ for $(u,\gamma) \in (0,1/2)\times(1,\infty)$. This claim is equivalent to
\[
\gamma \sqrt{(1-u) (\gamma-u)} > \gamma-u
\]
for all admissible pairs. Since both sides are positive, we square and simplify to find the claim equivalent to
\[
p(u):=\gamma^2-\gamma^2u-\gamma+u>0\ .
\]
Viewing the left-hand side as a function of $u$, we differentiate to see $p'(u)=1-\gamma^2<0$ for any choice of $\gamma >1$. So, $p(u)$ decreases in $u$ for any $\gamma >1$
Hence
\[
p(u) > \gamma^2-\frac{\gamma^2}{2}-\gamma+\frac{1}{2} = \frac{\gamma^2-2\gamma+1}{2} = \frac{(\gamma-1)^2}{2} > 0\ .
\]
Thus we conclude $d(u,\gamma) > 0$ for $(u,\gamma) \in (0,1/2)\times(1,\infty)$.

To summarize: we have shown that for all $(u,\gamma) \in (0,1/2)\times(1,\infty)$, $a(u,\gamma) > 0$, $b(u,\gamma)<0$, $c(u,\gamma)<0$ and $d(u,\gamma) > 0$.
This means that
\[
f''\left(\mu\right)\ \bigg|_{ \left(\frac{\sqrt{(1-u)(\gamma-u)}+u-1}{\gamma -1}\right)}  = \frac{[a(u,\gamma)][b(u,\gamma)]}{[c(u,\gamma)]^3[d(u,\gamma)]^3} > 0\ .
\]
Therefore we have found a local minimum of $f(\mu)$ that falls in $\left[(n+1)/N,1/2\right]$. Therefore, the maximum must be achieved at one of the endpoints.

We next show that the maximum is in fact achieved at $\mu = 1/2$. To do this, we compare the difference. Plugging in the definition $\gamma = (N-n)/n$, and simplifying, we find:
\[
f\left(\frac{1}{2}\right)-f\left(\frac{n+1}{N}\right) = 
\frac{n u (N-2n-2) (n u (N-2n-2)+N)}
{(1-2 u)(N(1-u)-n-1)(N-2nu-n)((n+1) (N-n)-n N u)}\ .
\]
Each term in this expression is positive for all $u\in(0,1/2)$ and hence the entire expression is positive. To see this, first observe that
the restriction $n < D \leq \lfloor N/2\rfloor$ means that the maximum value $n$ can attain is $\lfloor N/2\rfloor-1$.  This implies
$N-2n-2 \geq 0$. Since we also restrict $u\in(0,1/2)$, we also have $(N(1-u)-n-1)\geq 0$ and $(N-2nu-n)\geq 0$. Finally note
\[
(n+1) (N-n)-n N u \geq (n+1) (N-n)-\frac{n N}{2} = \frac{n (N-2 n-2)}{2} +N \geq 0\ .
\]
We conclude that $f(\mu)$ is maximized at $\mu=1/2$ over all choice of $(u,\gamma) \in (0,1/2)\times (1,\infty)$.

We next consider the function $g(\mu)$, defined in \eqref{twofunctions}. We write it again, its first two derivatives, and its critical point $\mu^*$ for subsequent discussion.
As this function is much simpler than $f(\mu)$, we present these quantities without comment.
\begin{align}
g(\mu) &= \left(u+\mu\right)\left(\gamma(1-\mu)+u\right)\ ,\nonumber \\
g'(\mu) &= -2 \gamma  \mu +\gamma -\gamma  u+u\ , \nonumber \\
g''(\mu) &= -2 \gamma\ , \nonumber \\
\text{ and } \ \mu^* &= \frac{\gamma(1 - u)+u}{2 \gamma}\ . \nonumber 
\end{align}

Since $g''(\mu) < 0$ for any choice of $(u,\gamma) \in (0,1/2)\times(1,\infty)$, we see that $\mu^*$ is a local maximum. For any $\gamma > 1$, we also see the critical point
decreases for $u \in (0,1/2)$, from a value of $1/2$ at $u=0$ to a value of $(1/4)+(1/(4\gamma))$. As $\gamma \nearrow \infty$, this approaches $1/4$ asymptotically. Hence
for any $(u,\gamma) \in (0,1/2)\times(1,\infty)$, the maximum of the function is attained for $\mu\in(0,1/2)$. Since we are ultimately interested in understanding the product
$f(\mu)g(\mu)$, we next show that the maximum of $g$ occurs at a value greater than the local minimum of $f$. We do this by comparing their difference to zero.
The claim
\[
\left[\frac{\gamma(1 -u)+u}{2 \gamma}\right]- \left[\frac{\sqrt{(1-u)(\gamma-u)}+u-1}{\gamma -1}\right] > 0
\]
is equivalent to the claim
\[
(\gamma-1)(\gamma(1-u)+u)+2\gamma(1-u) > 2\gamma\sqrt{(1-u)(\gamma-u)}\ .
\]
Both sides of this inequality are positive. So, we square them and simplify to find that the claim is equivalent to the claim
\[
(\gamma -1)^2 (\gamma(1 - u)-u)^2 > 0\ .
\]
The claim follows by the final form, since the square each quantity positive. We now define three related functions.
\begin{align}
ue(\mu) &:= g\left(\frac{\gamma(1 -u)+u}{2 \gamma}\right)f(\mu) = \frac{(1-\mu) \mu  (\gamma +\gamma  u+u)^2}{4 \gamma  (1-\mu -u) (\gamma  \mu -u)}\ , \nonumber \\
t(\mu) &:= g(\mu)f(\mu) =
\frac{\mu(1-\mu)(u+\mu)(\gamma(1-\mu)+u)}{(1-u-\mu)(\gamma\mu-u)}\ ,
\nonumber \\
\text{ and } ep(\mu) &:= g(1/2)f(\mu) = \frac{(1-\mu) \mu  (1+2u) (\gamma +2u)}{4 (1-\mu -u) (\gamma  \mu-u )}\ .    
\nonumber 
\end{align}
First notice that $t(\mu)$ is the quantity of interest, which we wish to show is maximized at $\mu=1/2$.
As defined, the function $ue(\mu)$ is an upper envelope of $t(\mu)$, with agreement at $\mu=(\gamma(1-u)+u)/(2\gamma)$. 
$ep(\mu)$ is defined so that $ep(1/2)=t(1/2)$, that is $ep$ agrees with $t$ at the end-point of the $\mu$-interval. 
Consider the behavior of $t(\mu)$ on $\mu \in [(\gamma(1-u)+u)/(2\gamma),1/2]$. We have
\begin{align}
t'(\mu) &= 1-2 \mu + \frac{(\gamma +1) u^2 
(\gamma  \mu ^2-\mu ^2+2 \mu -2 \mu  u+u-1)
}{(1-\mu -u)^2 (\gamma  \mu -u)^2}\ . \label{t_positivity_argument} 
\end{align}
Since $\mu \leq 1/2$, the sign of $t'(\mu)$ may be determined by the behavior of the third term in the numerator. We consider its behavior separately. Let
\begin{align}
a(\mu) &:= \gamma  \mu ^2-\mu ^2+2 \mu -2 \mu  u+u-1\ , \nonumber \\
a'(\mu) &= 2(1-u+\mu(\gamma-1))  > 0\ , \nonumber \\
\text{ and }\ \ a\left(\frac{\gamma(1-u)+u}{2\gamma}\right) &=  \frac{(\gamma-1)(u-\gamma(1-u))^2}{4\gamma^2} > 0\ . \nonumber 
\end{align}
We see then that $a(\mu)$ will be non-negative for $\mu \in [(\gamma(1-u)+u)/(2\gamma),1/2]$. Therefore, $t'(\mu) > 0$ on the same interval. Hence, $t(\mu)$ is increasing
on the same interval. Finally, consider the difference 
\begin{align}
&ep\left(1/2\right) - ue\left(\frac{n+1}{N}\right)\nonumber \\
&=
\frac{N u 
\begin{pmatrix}
2 u^2 (N-2 n) (N-2 n-1)(2 n (N-n-1)+N)\\
+N u (N-n) (N-2 n-3)+(N-n)^2 (N-2 n-2)-4 n N u^3 (N-2 n-1)\\
\end{pmatrix}}
{4(1-2 u) (N-n) (N-n-2 n u) (N(1-u)-n-1) (N-n+nN(1-u)-n^2)}
\end{align}
where we have again substituted the definition $\gamma = (N-n)/n$. We will now argue that this quantity is positive for all $n \in \left\{1,\dots,\lfloor N/2\rfloor-2\right\}$.
This is sufficient to demonstrate $t(\mu)$ is maximized at $\mu=1/2$, since we are supposing $n < D \leq \lfloor N/2\rfloor$. This restriction is necessary to handle the 
sign-change implicit in the term $(N-2 n-3)$. There, for $n=\lfloor N/2\rfloor-2$ it equals (for integer values of $N/2$) 1, while it flips signs for $N/2-1$. However, this sign-change
is not problematic since our assumptions imply at $n=N/2-1$ that $D=N/2$, which is the value we are trying to demonstrate maximizes $t(\mu)$. 

We will demonstrate positivity by analyzing the terms in the expression. 
For simplicity, we will assume $N/2$ is an integer, though the same analysis will hold for odd values of $N$. 
We will consider some of the denominator terms first. We have, using the assumptions,
\[
(N-n)+(nN(1-u)-n^2) \geq 
\frac{n (N-2 n-2)}{2} +N > 0\ .
\]
We also have
\[
N-n-2 n u \geq N-2n \geq N-N+4 > 0\ .
\]
So we see all terms in the denominator are positive for any choice of $(u,n)$.
Hence, it is enough to show that under our assumptions
\[
z(u):= 2 u^2 (N-2 n) (N-2 n-1)(2 n (N-n-1)+N)+N u (N-n) (N-2 n-3)-4 n N u^3 (N-2 n-1) \geq 0\ .
\]
First viewing the left-hand-side as a function of $u$, we observe the following computations:
\begin{align}
z'(u) &= 4 u (N-2 n) (N-2 n-1)(2 n (N-n-1)+N)+N  (N-n) (N-2 n-3)-12 n N u^2 (N-2 n-1)\ ,  \nonumber \\
z''(u) &= 4 (N-2 n) (N-2 n-1)(2 n (N-n-1)+N)-24 n N u (N-2 n-1)\ ,  \nonumber \\
z'''(u) &= -24 n N  (N-2 n-1) \leq 0\ . \nonumber 
\end{align}
From the third derivative, we see $z''(u)$ is decreasing in $u$. Since 
$z''(0)=4 (N-2 n-1) (N+2 n (N-n-1)) (N-2 n) > 0$, we calculate the value of the second derivative at $u=1/2$ to find
\[
z''(u)\bigg |_{u=1/2} = 4 (N-2 n-1) (4 n^3+4 n^2+2 n N^2+N^2-6 n^2N-7 n N) =: 4(N-2n-1)\phi(n)\ ,
\]
where we define the function $\phi(n)$ in-line. We analyze the sign of $\phi(n)$ for $n \in \left\{1,\dots,\lfloor N/2\rfloor -2\right\}$. 
Treating $n$ as continuous temporarily, we differentiate twice to find
\begin{align}
\phi''(n) &:= 24n-12N+8\ . \nonumber 
\end{align}
Since we assume $n \in \left\{1,\dots,\lfloor N/2\rfloor -2\right\}$,  we see
\[
\phi''(n) = 24n-12N+8 \leq 24\left(\frac{N}{2}-2\right) -12N+8 = -40 \leq 0\ .
\] 
This implies $\phi(n)$ is concave in $n$. Evaluating at the admissible endpoints, we find
\begin{align}
\phi(1) &= 8+N(3N-13)\ , \nonumber \\
\text{ and } \ \ \ \phi((N/2)-2)&= \frac{N^2+12 N-32}{2}\ . \nonumber
\end{align}
For $N \geq 4$, both of these expressions are positive. By concavity we conclude $\phi(n) \geq 0$.
Therefore, we have that
\[
z''(u)\bigg |_{u=1/2} > 0\ ,
\]
and so we conclude $z''(u)>0$ for all $u \in (0,1/2]$. But since 
\[
z'(u)\bigg |_{u=0} = N  (N-n) (N-2 n-3) > 0\ ,
\]
we infer that $z'(u)>0$ for all $u \in (0,1/2]$. Finally, since $z(0) = 0$, we conclude that $z(u) > 0$ for all $u \in (0,1/2]$. But this implies that
\begin{align}
ep\left(1/2\right) - ue\left(\frac{n+1}{N}\right) > 0\ \ \ . \label{positivity_demonstration}
\end{align}
Therefore, we can define the following function
\[
\text{maj}(\mu) :=
\begin{cases}
ue(\mu) \text{ if } \mu \in \left[\frac{n+1}{N}, \frac{\gamma(1-u)+u}{2\gamma}\right] \\
\ \\
t(\mu) \text{ if } \mu \in \bigg(\frac{\gamma(1-u)+u}{2\gamma}, \frac{1}{2} \bigg ]\ .\\
\end{cases}
\]
Observe that for all $\mu \in [(n+1)/N,1/2]$, we have $\text{maj}(\mu) \geq t(\mu)$. Additionally, we know that $\text{maj}(\mu)$ is maximized at
$\mu = 1/2\ $: the argument following \eqref{t_positivity_argument} 
shows for $\mu$ such that $\text{maj}(\mu)=t(\mu)$, $\text{maj}(\mu)$
strictly increases; the argument following \eqref{positivity_demonstration}
shows $\text{maj}(\mu)$ increases to its maximum on the interval. 
Finally, since we know $\text{maj}(1/2) = ep(1/2) = t(1/2)$, we conclude $t(\mu)$ is maximized at $\mu=1/2$ for all choice of $(u,\gamma)\in (0,1/2]\times(1,\infty)$.
This completes the proof.
\end{proof}

We are now ready to prove \eqref{HYPERGEOMETRIC:LP_BOUND}.
\begin{proof}[{\bf Proof of Theorem~\ref{thm:lpanalogue}}]
Pick $\lambda,k > 0$ such that $k = \sqrt{n}\lambda+n\mu$, $k \geq n(D/N)$. We then have
\begin{flalign}
  &&
  &P\left(\sqrt{n}(\bar{X} - \mu) \geq \lambda\right)  
  &&\nonumber \\
  &&
  &=
  P\left(\sum_{i=1}^nX_i  \geq k\right) 
  &&\nonumber \\
  &&
  &\leq
  \left[P\left(\sum_{i=1}^nX_i = k\right)\right] 
  \left(\frac{k(N-D-n+k)}{Nk-nD}\right)\ 
  &&
  \text{by \eqref{HYPERGEOMETRIC_TAIL_BOUND}} \nonumber \\
  & 
  &&
  \leq
  \left[\frac{1}{\sqrt{2\pi}} \frac{\sqrt{D(N-D) n (N-n)}}{ \sqrt{k(D-k)(n-k) (N-D-(n-k)) N}} \left(\frac{k(N-D-n+k)}{Nk-nD}\right)\right]  
  &&
  \nonumber\\
  &&
  &\ \cdot\ \left[\exp\left(-\frac{2nN}{N-n} u^2\right) \exp\left(-\frac{n}{3}\left(1+\frac{n^3}{(N-n)^3}\right) u^4 \right)\right] 
  &&
  \text{by \eqref{HYPERGEOMETRIC_DEVIATE_BOUND}} \nonumber \\
  &&
  &= \left[A\right] \cdot \left[\exp\left(-\frac{2nN}{N-n} u^2\right) \exp\left(-\frac{n}{3}\left(1+\frac{n^3}{(N-n)^3}\right) u^4 \right)\right]  \ .
  &&
  \label{approxdeparture}
\end{flalign}
Recall that $u:=(k/n)-(D/N)$ in the previous bound.
Define $f:=n/N$, $\overline{f}:= 1-f_N = (N-n)/N$, $\mu:=D/N$, and 
Furthermore, define the ratio 
\[ \gamma := \frac{\overline{f}}{f} = \frac{N-n}{n}\ . \] 
We may then write:
\[
D-k 
= \left(N\frac{D}{N}-n\frac{k}{n}\right) 
= n\left(\frac{N}{n}\mu-\frac{k}{n}\right) 
= n\left(\frac{N}{n}\mu-u-\mu\right) 
= n\left(\gamma\mu-u\right)\ .
\]
Similarly we have
\[
N-n-(D-k) = N-n-n\left(\gamma\mu-u\right) = n(\gamma-\gamma\mu+u) = n(\gamma[1-\mu]+u)\ .
\]
Using these parametrizations, we may write
\begin{align}
\left[A\right]
&= 
\frac{1}{\sqrt{2\pi}} \sqrt{\frac{D(N-D) n (N-n)}{k(D-k)(n-k) (N-D-(n-k)) N}} \left(\frac{k(N-n-(D-k))}{Nn((k/n)-(D/N))}\right)\nonumber \\
&= 
\frac{1}{\sqrt{2\pi}} \sqrt{\frac{D(N-D) n (N-n) k^2(N-n-(D-k))^2}{k(D-k)(n-k) (N-n-(D-k)) N^3n^2}} \left(\frac{1}{u}\right)\nonumber \\
&= 
\sqrt{\frac{(N-n)}{2\pi n N u^2}} \sqrt{\frac{D}{N}\left(1-\frac{D}{N}\right)\frac{\left(\frac{k}{n}\right)}{\left(1-\frac{k}{n}\right)}\frac{(N-n-(D-k))}{(D-k)}} \nonumber \\
&= 
\sqrt{\frac{(N-n)}{2\pi n N u^2}} \sqrt{\mu\left(1-\mu\right)\frac{\left(u+\mu\right)}{\left(1-u-\mu\right)}\frac{\gamma(1-\mu)+u}{\gamma\mu-u}} \nonumber \\
&\leq
\sqrt{\frac{(N-n)}{2\pi n N u^2}} 
\sqrt{\frac{1}{4}\frac{(u+(1/2))(\gamma(1-(1/2))+u)}{(1-u-(1/2))(\gamma(1/2)-u)}} \label{New_Abound}
\end{align}
with the last inequality following by \eqref{A_TERM_FUNCTION_BOUND} established in Lemma \ref{HG_Technical_Lemma}. Observe under these parametrizations $u = \lambda/\sqrt{n}$. Hence, if we use \eqref{New_Abound} to provide an
upper bound for \eqref{approxdeparture}, substitute $\lambda/\sqrt{n}$ for $u$, and then simplify, the claim is proved.
\end{proof}

Some of the machinery developed in the preceding lemmas will be adapted to prove \eqref{TAL:HYPER:BOUND}. The argument follows.

\begin{proof}[{\bf Proof of Theorem~\ref{thm:talagrandanalogue}}]
Suppose now that $1 \leq n < D \leq N-1$. We consider $k$ such that $0 \vee n+D-N < k \leq n$. The decomposition of a Hypergeometirc probability into $A$, $B$, and $C$ terms stated in
\eqref{hgdecomposition} still applies. For $k \geq n (D/N)$, the bound on the $B$ term in \eqref{Bbound} still holds. Thus we may write
\begin{align}
B 
&\leq \exp \left ( - \frac{2nN}{N-n} u^2  \right ) \exp \left ( - \frac{n}{3} \left ( 1 + \frac{n^3}{(N-n)^3} \right ) u^4 \right ) \nonumber \\
&=
\exp\left(-\frac{2n}{1-\frac{n}{N}}  u^2  \right) \exp \left (-\frac{n}{4}u^4\right)\exp \left (-\frac{n}{12}u^4\right)\exp\left( -\left[\frac{n^4}{3(N-n)^3}\right]u^4\right)\ .
\label{Bbound2}
\end{align}
Also recall we showed that $C \leq 1$ at \eqref{Cbound} when $n \leq D \leq N/2$. In fact, the expression at \eqref{Cbound} shows $C \leq 1$ under the current assumptions. When $n \leq N/2$,
all exponential arguments may be determined to be negative by inspection. When $n > N/2$, the only fraction whose sign is unclear is
\[
\frac{1-12(N-2n+k)}{12 (12(n-k)+1) (N-n)}\ .
\]
However, this remains negative under the current assumptions since $n>N/2$ implies $k \geq n+D-N$. Therefore, $N+k \geq n+D$ and so $N+k-2n \geq D-N \geq 0$. 
We thus conclude $C \leq 1$.
Here though, we provide a new analysis of the $A$ term under the current assumptions.

\textit{\textbf{Case 1}}

First restrict $k$ so that $\mu_0 < \frac{k}{n} < 1-\frac{\mu_0}{2}$. We then have
\begin{align}
A  &=  \frac{1}{\sqrt{2\pi}} \sqrt{\frac{D(N-D)n(N-n)}{k(D-k)(n-k) (N-D-(n-k)) N}} \nonumber \\
  &=  \frac{N}{\sqrt{2\pi n}} \sqrt{\frac{\frac{D}{N}(1-\frac{D}{N})(1-\frac{n}{N})}
                        {\frac{k}{n}(D-k)(1-\frac{k}{n}) (N-D-n+k)}} \nonumber \\
  &\leq  \frac{N}{\sqrt{2\pi n}} \sqrt{\frac{(1/4)(1-\psi_0)}
                                                     {\mu_0(D-n+n\frac{\mu_0}{2})(\frac{\mu_0}{2}) (N-D-n+n\mu_0)}} 
  \leq  \frac{N}{\sqrt{2\pi n}} \sqrt{\frac{(1/4)(1-\psi_0)}{\mu_0(n\frac{\mu_0}{2})(\frac{\mu_0}{2}) (n\mu_0)}} \nonumber \\
  &= \frac{1}{\frac{n}{N}\sqrt{n}} \sqrt{\frac{2(1/4)(1-\psi_0)}{\pi \mu_0^4}} 
        \leq \frac{1}{\sqrt{n}} \frac{\sqrt{(1-\psi_0)}}{\psi_0\sqrt{2\pi \mu_0^4}}  .\label{Abnd:case1}
\end{align}
Combining \eqref{Abnd:case1} with \eqref{Bbound2} and \eqref{Cbound}, we have the bound
\[
\frac{{D \choose k}{N-D \choose n-k}}{{N \choose n} } 
\leq
\frac{K_{c1}}{\sqrt{n}} 
\exp\left(-\frac{2n}{1-\frac{n}{N}}  u^2  \right) 
\exp \left (-\frac{n}{4}u^4\right)\exp \left (-\frac{n}{12}u^4\right)
\exp\left( -\left[\frac{n^4}{3(N-n)^3}\right]u^4\right)
\]
where
\[
K_{c1} = \left[\frac{\sqrt{(1-\psi_0)}}{\psi_0\sqrt{2\pi \mu_0^4}}\right].
\]

\textit{\textbf{Case 2}}

Next, suppose that $1-\frac{\mu_0}{2}\leq\frac{k}{n} < 1$. This implies that 
\[
u =\frac{k}{n}-\frac{D}{N} \geq 1-\frac{\mu_0}{2} - \frac{D}{N} \geq 1-\frac{\mu_0}{2} - (1-\mu_0) = \frac{\mu_0}{2} .
\]
We can bound the $A$ term by
\begin{align}
  A  &=  \frac{1}{\sqrt{2\pi}} \sqrt{\frac{D(N-D)n(N-n)}{k(D-k)(n-k) (N-D-(n-k)) N}} \nonumber \\
  &=  \frac{N}{\sqrt{2\pi}} \sqrt{\frac{\frac{D}{N}(1-\frac{D}{N})(1-\frac{n}{N})}{\frac{k}{n}(D-k)(n-k) (N-D-n+k) }} \nonumber \\
  &\leq  \frac{N}{\sqrt{2\pi}} \sqrt{\frac{(1/4)(1-\psi_0)}
  {(1-\frac{\mu_0}{2})(D-n+1)(n-n+1) (N-D-n+n(1-\frac{\mu_0}{2})) }} \nonumber \\
  &\leq  \frac{N}{\sqrt{2\pi}} \sqrt{\frac{(1/4)(1-\psi_0)}{(1-\frac{\mu_0}{2})(n(1-\frac{\mu_0}{2})) }} 
             =  \frac{n\frac{N}{n}}{\sqrt{n}} \sqrt{\frac{(1/4)(1-\psi_0)}{2\pi(1-\frac{\mu_0}{2})^2}} \nonumber \\
  &\leq  \frac{n\frac{1}{\psi_0}}{\sqrt{n}} \sqrt{\frac{(1/4)(1-\psi_0)}{ 2\pi(1-\frac{\mu_0}{2})^2}} 
             =  \frac{n}{\sqrt{n}} \sqrt{\frac{(1/4)(1-\psi_0)}{ 2\pi\psi_0^2(1-\frac{\mu_0}{2})^2}} \ . \nonumber 
\end{align}
Taking the $\exp \left (-\frac{n}{12}u^4\right)$ term from \eqref{Bbound2} we have
\[
n\exp \left (-\frac{n}{12}u^4\right) \leq 
n\exp \left (-\frac{n}{12}\left(\frac{\mu_0}{2}\right)^4\right)  
=
n\exp \left (-\frac{\mu_0^4}{192}n\right) \  .
\]
This is maximized at
\[
n=\frac{192}{\mu_0^4},
\]
and so
\[
n\exp \left (-\frac{m}{12}u^4\right) \leq  \frac{192}{\mu_0^4e} .
\]
Combining the remaining terms in \eqref{Bbound} together with this bound of the A term and the C bound of 1 yields
\[
\frac{{D \choose k}{N-D \choose n-k}}{{N \choose n}} 
\leq
\frac{K_{c2}}{\sqrt{n}} 
\exp\left(-\frac{2n}{1-\frac{n}{N}}  u^2  \right) 
\exp \left (-\frac{n}{4}u^4\right)
\exp\left( -\left[\frac{n^4}{3(N-n)^3}\right]u^4\right)
\]
where
\[
K_{c2}
=
\sqrt{\frac{(1/4)(1-\psi_0)}{ 2\pi\psi_0^2(1-\frac{\mu_0}{2})^2}}
\left(\frac{192}{\mu_0^4e}\right)
\]

\textbf{Case: $k = n$}

When $k=n$ there are only two binomial coefficients to consider in the hypergeometric probability.
Therefore, we must derive a new bound via Stirling's formula. 
Doing so yields
\begin{align}
\frac{{D \choose n}{N-D \choose 0}}{{N \choose n}}
&=
\frac{D!(N-n)!}{(D-n)!N!} \nonumber \\
&\leq 
\sqrt{\frac{ D  (N-n)}{(D-n)N}}
\frac{D^D(N-n)^{(N-n)}}{(D-n)^{(D-n)}N^N}
\exp\left(\frac{1}{12D}+\frac{1}{12(N-n)}-\frac{1}{12(D-n)+1}-\frac{1}{12N+1}\right) \nonumber \\
&=: A'B'C' \ .\nonumber
\end{align}
We can bound $C'$ by

\begin{align}
C' &=
\exp\left(
\frac{12(N-D)+1}{(12D)(12N+1)}
-
\frac{12(N-D)+1}{(12(N-n))(12(D-n)+1)}
\right)\nonumber \\
&=
\exp\left(
\frac{[12(N-D)+1]([(12(N-n))(12(D-n)+1)]-[(12D)(12N+1)])}{[(12D)(12N+1)][(12(N-n))(12(D-n)+1)]}
\right) \nonumber \\
&\leq 1 \nonumber
\end{align}
with the final bound following since $(N-D)>0$ and $[(12(N-n))(12(D-n)+1)] < [(12D)(12N+1)]$.
Continuing with $B'$ we have
\begin{align}
B' 
&= \frac{D^D(N-n)^{(N-n)}}{(D-n)^{(D-n)}N^N} 
     = \frac{\left(\frac{D}{N}\right)^D\left(\frac{N-n}{N}\right)^{(N-D)}}{\left(\frac{D-n}{N-n}\right)^{D-n}} \nonumber \\
&= \left(\frac{\frac{N-n}{N}}{\frac{D-n}{D}}\right)^{D-n}\left(\frac{\frac{N-n}{N}}
                   {\frac{N-n-(D-n)}{N-D}}\right)^{(N-D)}\left(\frac{D}{N}\right)^n
\nonumber \\
&=
\exp\left(-\Gamma + n\log(\mu)\right) \nonumber
\end{align}
where, as before, we have
\[
\Gamma=
(N-n) 
\left[ 
\left(\frac{D-n}{N-n}\right)\log\left(\frac{(D-n)/D}{(N-n)/N}\right)
+ \left(1 -\frac{D-n}{N-n}\right) \log\left(\frac{[N-n - (D-n)]/(N-D)}{(N-n)/N}\right)\right]\ .
\]
Using the previous analysis, we can write
\[
B' = \exp\left(-(N-n)\Psi\left(\frac{f}{\bar{f}}u,1-\mu\right)+n\log(\mu)\right) = 
\exp\left(-(N-n)\Psi\left(\gamma,u\right)+n\log(1-u)\right) 
\]
where we define $\gamma := \frac{f}{\bar{f}}u$, $f:=f_N=\frac{n}{N}$ and $\bar{f}:=\bar{f}_N = 1-f_N = \frac{N-n}{N}$ and use the equality
$u=1-\mu$ under the current hypothesis. Using the analysis from van der Vaart and Wellner, page 461, re-parametrized to the situation at hand, we obtain
\[
\Psi\left(\gamma,u\right) \geq 2\gamma^2 + \gamma^4/3 \ .
\]
We also have the bound via the Taylor expansion:
\[
\log(1-u) = -\left[\sum_{k=1}^\infty \frac{u^k}{k}\right] \leq -\left[\sum_{k=1}^7 \frac{u^k}{k}\right] 
\]

Hence
\begin{align}
B'
&\leq \exp\left(-(N-n)\left[2\left(\frac{f}{\bar{f}}u\right)^2 + \frac{\left(\frac{f}{\bar{f}}u\right)^4}{3}\right]
         +n\log(1-u)\right)  \nonumber \\
&= \exp\left(-2\left(\frac{n^2}{N-n}\right)u^2 -\frac{1}{3}\left(\frac{n^4}{(N-n)^3}\right)u^4 +n\log(1-u)\right)
         \nonumber \\
&\leq \exp\left(-2\left(\frac{n^2}{N-n}\right)u^2 -\frac{1}{3}\left(\frac{n^4}{(N-n)^3}\right)u^4-n\left[\sum_{k=1}^7 \frac{u^k}{k}\right] \right) 
         \nonumber \\
&= \exp\left(- \left(\frac{2nN}{N-n}\right)u^2 -\frac{1}{3}\left(\frac{n^4}{(N-n)^3}\right)u^4 -nu + \frac{3nu^2}{2} 
          - \frac{nu^3}{3}- \frac{nu^6}{6}- \frac{nu^7}{7}\right) \nonumber \\
&\ \cdot \exp\left( - \frac{nu^4}{4}\right)
               \exp\left( - \frac{nu^5}{5}\right) 
               \nonumber \\
&\leq \exp\left(-\frac{2n}{1-\frac{m}{N}}  u^2\right) 
\exp\left(-\frac{1}{3}\left(\frac{n^4}{(N-n)^3}\right)u^4\right)
\exp\left( - \frac{nu^4}{4}\right)
\exp\left( - \frac{nu^5}{5}\right) \ , \nonumber
\end{align}
where the last inequality follows since for $x>0$
\[
x  + \frac{x^3}{3}+ \frac{x^6}{6} + \frac{x^7}{7} - \frac{3}{2}x^2> 0\ .
\]
For $x\ge 0$ this polynomial has a global minimum at $0$ 
and local minimum at $x\approx 0.851662$ with a value of approximately $0.0796078$.
Finally we have
\[
A'
=\sqrt{\frac{ D  (N-n)}{(D-n)N}}
      =\frac{n}{\sqrt{n}}\sqrt{\frac{\frac{D}{N} (1-\frac{n}{N})}{(D-n)\frac{n}{N}}} 
\leq\frac{n}{\sqrt{n}}\sqrt{\frac{(1-\mu_0)(1-\psi_0)}{\psi_0}} 
\]
where the final inequality uses 
the fact that $D-n \geq 1$ in this case. Taking the expression $\exp\left( - \frac{nu^5}{5}\right)$ from 
the bound on $B'$, and observing $u = 1-\frac{D}{N} \geq \mu_0$ we have
\[
n\exp\left( - \frac{nu^5}{5}\right) \leq n\exp\left(-\frac{\mu_0^5}{5}n\right) \leq \frac{5}{\mu_0^5e}
\]
since $xe^{-x} \leq e^{-1}$ for $x > 0$.
Combining the bounds on $A'$,$B'$, and $C'$, we have shown
\[
\frac{{D \choose n}{N-D \choose 0}}{{N \choose n}} 
\leq
\frac{K_{c3}}{\sqrt{n}} 
\exp\left(-\frac{2n}{1-\frac{n}{N}}  u^2\right) 
\exp\left(-\frac{1}{3}\left(\frac{n^4}{(N-n)^3}\right)u^4\right)
\exp\left( - \frac{nu^4}{4}\right)
\]
where
\[
K_{c3} = \sqrt{\frac{(1-\mu_0)(1-\psi_0)}{\psi_0}}
\left(\frac{5}{\mu_0^5e}\right) \ .
\]
Hence if we set $K_{1} = \max(K_{c1},K_{c2},K_{c3})$  we have the bound
\begin{flalign}
  &&
  \frac{{D \choose k}{N-D \choose n-k}}{{N \choose n}} 
  &\leq
  \frac{K_{1}}{\sqrt{n}} \exp\left(-\frac{2n}{1-\frac{n}{N}}  u^2\right) 
  \exp\left(-\frac{1}{3}\left(\frac{n^4}{(N-n)^3}\right)u^4\right) \exp\left(-\frac{nu^4}{4}\right)\ . 
  && 
  \label{proof:intermediate_rep}
\end{flalign}
Plugging in the definitions $k=\sqrt{n}\lambda + n\mu$ and $u = k/n-\mu$
\begin{align}
P\left(\sum_{i=1}^nX_i = k\right)
&=
P\left(\sqrt{n}(\bar{X} - \mu) = \lambda\right) \nonumber \\
&\leq
\frac{K_{1}}{\sqrt{n}} 
\exp\left(-\frac{2\lambda^2}{1-\frac{n}{N}}  \right) 
\exp\left(-\frac{1}{3}\left(\frac{n}{N-n}\right)^3\frac{\lambda^4}{n}\right)
\exp\left( - \frac{\lambda^4}{4n}\right) \ . \nonumber
\end{align}

This gives inequality (i). To obtain inequality (ii), define, for any $n,N$ pair subject to our conditions, 
\[
h(x)=
\left(\frac{2}{1-\frac{n}{N}}\right)  x^2
    + \left ( \frac{1}{3} \left(\frac{n}{N-n}\right)^3 + \frac{1}{4}\right) x^4 
=: ax^2 + bx^4
\]
with $a,b>0$ since $N>n$. Hence $h$ is convex. 
Therefore, as in the Talagrand argument, we also have $h(x) \geq h(u) - (x-u)h'(u)$ for all $x$.
Also for $0\leq x \leq 1$ we see $h'(x)=2ax+4bx^3$ has linear envelopes
\[
2ax \leq h'(x)\leq (2a+4b)x \, .
\]
Let $0 < t < \lambda \leq \sqrt{n}$. Let $k_0 = \lceil n\frac{D}{N} + \sqrt{n}t \rceil = \lceil  n\mu + \sqrt{n}t \rceil$. 
Using the bound at 
\eqref{proof:intermediate_rep} we have
\begin{align}
\sum_{k \geq k_0} \frac{{D \choose k}{N-D \choose n-k}}{{N \choose n}} 
&\leq 
\sum_{k \geq k_0} \frac{K_{1}}{\sqrt{n}} 
\exp\left(-nh(u) -n \left[\frac{k}{n} - \frac{D}{N} - u \right]h'(u) 
\right) \nonumber \\
&= \frac{K_{1}}{\sqrt{n}}\exp\left(-nh(u)\right)
\sum_{k \geq k_0}  
\exp\left(\left[nu - (k - n\mu)\right]h'(u) \right)
\nonumber \\
&\leq
\frac{K_{1}}{\sqrt{n}}\exp\left(-nh(u)\right)
\left[
\frac{
\exp\left(\left[nu - (k_0 - n\mu)\right]h'(u) \right)
}
{1-\exp(-h'(u))}
\right]
\nonumber \\
&\leq
\frac{K_{1}}{\sqrt{n}}\exp\left(-nh(u)\right)
\left[
\frac{K_{ab}}
{h'(u)}
\exp\left(\left[nu - (k_0 - n\mu)\right]h'(u)\right)
\right]
\nonumber \\
&\leq
\frac{K_{1}}{\sqrt{n}}\exp\left(-nh(u)\right)
\left[
\frac{K_{ab}}
{2au}
\exp\left(\left[nu - \sqrt{n}t\right][2a+4b]u\right)
\right]
\nonumber \\
&=
\frac{K_2}{\sqrt{n}u}\exp\left(-nh(u)\right)
\left[
\exp\left(nu\left(u - \frac{t}{\sqrt{n}}\right)[2a+4b]\right)
\right]
\nonumber 
\end{align}
where $K_{ab}$ is a constant that depends on $a$ and $b$, and hence $n$ and $N$, (which we further explain below), and
\[
K_2 = \frac{K_{1}K_{ab}}{2} \ .
\]
We determine $K_{ab}$ by observing $1 - e^{-v} \ge v/M$ for $0 \le v \le v_0$ where
$M = M_{v_0} = v_0/(1-e^{-v_0})$ together with 
\begin{align}
h' (u) 
&\leq  (2a + 4b) u \le 2a + 4b  \ \  \ \ \ \ \ \ \ \ \ \ \ \ \ \ \ \ \ \text{(since }u \le 1\text{)} \nonumber \\
&=  \frac{4}{1 - \frac{n}{N}} + \left ( 1 + \frac{4}{3} \left ( \frac{n/N}{1- \frac{n}{N}} \right )^3 \right ) \equiv v_N \nonumber \\
&\leq  \frac{4}{\psi_0} + \frac{4}{3} \frac{(1-\psi_0)^2}{\psi_0^3} \equiv v_0 . \nonumber
\end{align}
Therefore $K_{a,b}$ can be taken to be
$M = v_0/(1-e^{-v_0} )$ or $M_N = v_N/ (1-e^{-v_N} )$ depending on how much dependence 
on $n$ and $N$ we leave in the bounds. Again by definition we have that
\[
2a+4b =
\left(\frac{4}{1-\frac{n}{N}}\right)+
\left (1+ \frac{4}{3} \left(\frac{n}{N-n}\right)^3\right)
\]  
Therefore we have for all $0 < t < \lambda$ 
\begin{align}
P\left( \sqrt{n} (\overline{X}_n - \mu) \geq t\right) 
&=  \sum_{k \geq k_0} \frac{{D \choose k}{N-D \choose n-k}}{{N \choose n}}  \nonumber \\
&\leq  
\frac{K_2}{\sqrt{n}u}\exp\left(-nh(u)\right)
\exp\left(nu\left(u - \frac{t}{\sqrt{n}}\right)\left[
\left(\frac{4}{1-\frac{n}{N}}\right)+
\left (1+ \frac{4}{3} \left(\frac{n}{N-n}\right)^3\right)
\right]\right) \nonumber \\
&=
\frac{K_2}{\lambda}\exp\left(-nh\left(\frac{\lambda}{\sqrt{n}}\right)\right)
\exp\left(\lambda(\lambda - t)\left[
\left(\frac{4}{1-\frac{n}{N}}\right)+
\left (1+ \frac{4}{3} \left(\frac{n}{N-n}\right)^3\right)
\right]\right) \nonumber
\end{align}
which gives inequality (ii). Inequality (iii) is obtained by setting $t=\lambda$. This completes the proof.
\end{proof}

\textbf{Acknowledgement: } The second author owes thanks to Werner Ehm for several helpful conversations
and to Martin Wells for pointing out the Pitman reference.
We also thank the referee and the editors for a number of constructive queries and suggestions, for pointing
out the recent paper \citep{MR3162712}, 
and for encouraging the inclusion of several 
graphical comparisons of the bounds.

\end{document}